\newcolumntype{H}{>{\setbox0=\hbox\bgroup}c<{\egroup}@{}}
\sloppy\pagestyle{plain}
\newcommand{\QQ}{{\mathbb Q}}
\newcommand{\ZZ}{{\mathbb Z}}
\newcommand{\PP}{{\mathbb P}}
\newcommand{\CC}{{\mathbb C}}
\newcommand{\LL}{{\mathbb L}}
\newcommand{\HH}{{\mathbb H}}
\newcommand{\Aff}{{\mathbb A}}
\newcommand{\dvA}{{\mathrm A}}
\newcommand{\dvD}{{\mathrm D}}
\newcommand{\cC}{{\mathcal{C}}}
\newcommand{\cL}{{\mathcal{L}}}
\newcommand{\cN}{{\mathcal{N}}}
\newcommand{\cO}{{\mathcal{O}}}
\DeclareMathOperator{\Pic}{Pic}
\DeclareMathOperator{\Spec}{Spec}
\DeclareMathOperator{\Proj}{Proj}
\DeclareMathOperator{\Ext}{Ext}
\newcommand{\Db}{{\mathcal{D}^b_{\mathrm{coh}}\,}}
\newcommand{\rk}{\mathrm{rk}\,}
\newtheorem{theorem}[equation]{Theorem}
\newtheorem*{theorem*}{Theorem}
\newtheorem{proposition}[equation]{Proposition}
\newtheorem*{proposition*}{Proposition}
\newtheorem{lemma}[equation]{Lemma}
\newtheorem{corollary}[equation]{Corollary}
\newtheorem*{corollary*}{Corollary}
\newtheorem{conjecture}[equation]{Conjecture}
\newtheorem*{problem*}{Problem}
\newtheorem*{question*}{Question}
\newtheorem*{construction*}{Construction}
\newtheorem*{maintheorem*}{Main Theorem}
\theoremstyle{definition}
\newtheorem{example}[equation]{Example}
\newtheorem*{example*}{Example}
\newtheorem{definition}[equation]{Definition}
\newtheorem*{definition*}{Definition}
\newtheorem{notation}[equation]{Notation}
\theoremstyle{remark}
\newtheorem{remark}[equation]{Remark}
\newtheorem*{remark*}{Remark}
\makeatletter\@addtoreset{equation}{section} \makeatother
\author{
Victor Przyjalkowski}
\title[Landau--Ginzburg models and exceptional collections]{Landau--Ginzburg models for Fano threefolds of Picard rank one and exceptional collections}
\address{\emph{Victor Przyjalkowski}
\newline
\textnormal{Steklov Mathematical Institute of Russian Academy of Sciences, 8 Gubkina street, Moscow 119991, Russia.}
\newline
\textnormal{\texttt{victorprz@mi-ras.ru, victorprz@gmail.com}}}
\begin{document}

\begin{abstract}
We study fibers with isolated singularities of Landau--Ginzburg models for Fano threefolds of Picard rank one.
We compare the data we get with maximal known lengths of exceptional collections in derived categories
of coherent sheaves on the Fano threefolds, verify some predictions of Homological Mirror Symmetry,
and present some expectations about exceptional collections for Fano threefolds.
\end{abstract}

\maketitle

\section{Introduction}
\label{section:introduction}

One of ways to study smooth complex projective variety $X$ is to study its bounded derived category of coherent sheaves $\Db(X)$.
For instance, considering $\Db(X)$ as a monoidal category, one can reconstruct $X$ from it~\cite{Ba02};
if $X$ is a Fano variety, 
then, by Bondal--Orlov reconstruction
theorem~\cite{BO01}, one can reconstruct $X$ even without considering a monoidal structure. On the other hand, to study derived
categories, it is often useful to ``simplify'' them, constructing exceptional collections of maximal length.

Interest to derived categories of coherent sheaves raised up after appearance of Kontsevich's Homological Mirror Symmetry.
Mirror Symmetry phenomenon relates symplectic and algebraic properties of objects of different nature;
in particular, it relates a smooth Fano variety $X$ to its Landau--Ginzburg model $(Y,w)$ --- a certain one-dimensional
family of Calabi--Yau varieties. Homological Mirror Symmetry claims that $\Db(X)$ is equivalent to the Fukaya--Seidel category
$\mathrm{FS} (Y,w)$, whose objects are vanishing (to singularities of singular fibers) Lagrangian cycles in fibers.
Ordinary double points in fibers $w^{-1}(t)$, $t\in \CC$, correspond to exceptional objects in $\Db(X)$.
The simplest case is when $(Y,w)$ is a Lefschetz pencil, that is when all fibers are smooth except for
$k$ fibers which have a single ordinary double point in each one.
In~\cite{Se08} a subca\-te\-go\-ry~$\mathcal{C}$ of $\mathrm{FS}(Y,w)$ was constructed, and it was proved that $\mathcal{C}$ has
a semiorthogonal decomposition into $k$ objects:
$$
\mathcal{C}=\langle \mathcal{L}_1,\ldots,  \mathcal{L}_k\rangle.
$$
According to~\cite{GPS20, GPS24a, GPS24b},  
the category $\mathcal{C}$ is ``large enough''.
This imp\-lies~$\mathcal{C}$ is equivalent to $\mathrm{FS}(Y,w)$.
Thus, in the case when $(Y,w)$ is a Lefschetz pencil, the Homological Mirror Symmetry conjecture predicts that $\Db(X)$ has a full exceptional collection consisting
of $k$ objects.

Similarly, $\mathrm{FS}(Y,w)$ has a full exceptional collection if the singularities of all singular fibers of $(Y,w)$
are ordinary double points (possibly lying in the same fiber).
Unfortunately, a Landau--Ginzburg model of a Fano variety rarely has only ordinary double points as singularities of fibers.
However if there are $k$ ordinary double points, then in~\cite{AAK16}
it was constructed a subca\-te\-go\-ry~$\mathcal{C}$ of $\mathrm{FS}(Y,w)$
such that
$$
\mathcal{C}=\langle \mathcal{R}_Y, \mathcal{L}_1,\ldots,  \mathcal{L}_k\rangle,
$$
where $\mathcal{R}_Y$ is some category, and $\mathcal{L}_i$ form an exceptional collection. It is expected
that $\mathcal{C}$ is ``large enough'' which implies that
$
\mathcal{C}$ is equivalent to $FS(Y,w)
$.
(Proof of this expectation has big symplectic difficulties though.)
In particular, the Homological Mirror Symmetry conjecture implies that $\Db(X)$ has an exceptional collection
with $k$ exceptional objects as well.

Given a Fano variety $X$ of dimension $n$, one can associate with it a so-called \emph{toric Landau--Ginzburg model} $f_X$,
that is, a certain Laurent polynomial in $n$ variables whose geometrical and combinatorial properties reflect ones of $X$,
see~\cite{Prz18} for details. A Landau--Ginzburg model $(Y,w)$ for $X$ is constructed as a Calabi--Yau
compactification of the family $f_X\colon (\CC^*)^n\to \CC$. If $n=3$ and $\Pic(X)=\ZZ$, and if we require that the Dolgachev--Nikulin
duality for anticanonical sections of $X$ and fibers of $w$ holds (see~\cite{DHKOP24}), then $(Y,w)$ is unique up
to flops (while there is a lot of different toric Landau--Ginzburg models for $X$).
By~\cite{Prz13}, the family $(Y,w)$ has at most one reducible or non-reduced fiber (the number
of its irreducible components is equal to $h^{1,2}(X)$). If there indeed exist a fiber with non-isolated singularities
(this happens  if and only if $h^{1,2}(X)>0$), we call it \emph{central}.
Thus, singular fibers of $(Y,w)$ are at most one fiber with non-isolated singularities,
and several fibers with isolated singularities. The number and type of these singularities depend only on $X$. 

\begin{notation}
\label{notation:epsilon}
Let $X$ be a smooth Fano threefold, and let $(Y,w)$ be its Landau--Ginzburg model.
We denote the number of isolated singularities of fibers of $w$ by $\varepsilon(X)$.
\end{notation}

Note that by Theorem~\ref{theorem:ODP for LG} all isolated singular points of fibers of $w$ are
ordinary double points (this also follows from the fact that Dolgachev--Nikulin duality implies that the fibers $w^{-1}(t)$ are K3 surfaces of Picard rank $19$).

Let us 
consider two- and three-dimensional cases. For this for a given smooth Fano variety $X$
let $i_X$ denote its Fano index, that is, the maximal integer dividing $-K_X$ in $\Pic(X)$.
Let us also denote the maximal possible length of exceptional collection in $\Db(X)$ by $\omega(X)$, and the maximal possible length of an extension of the \emph{standard} exceptional collection $\cO_X,\ldots,\cO_X(i_X-1)$ by $\omega_\cO(X)$, see Proposition~\ref{proposition:standard collection} and Definition~\ref{definition:omegi}.

Let $S_d$ be smooth two-dimensional Fano variety (i.\,e. del Pezzo surface) of degree $d$.
That is, $S_d$ is either a quadric surface (with $d=8$)
or a blow-up of $\PP^2$ in $9-d$ general enough points, where $d=0,\ldots,8$.
Recall that Landau--Ginzburg models for $S_d$ correspond to elements of $\Pic(S_d)\otimes \CC$.
We call Landau--Ginzburg models corresponding to general member by \emph{general} ones,
and a Landau--Ginzburg model correspond to $-K_{S_d}$ by \emph{anticanonical} one (for details see \cite{AKO06} and~\cite{Prz17}).

\begin{theorem}[{\cite{AKO06}, see also~\cite{Prz17}}]
\label{theorem: LG for del Pezzo}
Let $(Y,w)$ be a general Landau--Ginzburg model of $S_d$.
Then the singular fibers of $(Y,w)$ are $12-d$ fibers having a single ordinary double point in each.
\end{theorem}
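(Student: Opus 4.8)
The plan is to count the critical points of the Laurent polynomial underlying $(Y,w)$ by a lattice-point computation, and then to cross-check the outcome against the Euler characteristic of the compactified total space. Recall from~\cite{AKO06, Prz17} that the general Landau--Ginzburg model of $S_d$ is a Calabi--Yau compactification of a Laurent polynomial $f\colon(\CC^*)^2\to\CC$ whose Newton polygon $\Delta$ is reflexive --- it is the fan polygon of a Gorenstein toric degeneration of $S_d$, or of $S_d$ itself when $S_d$ is toric --- the general fibre of $w$ being a smooth genus-one curve. The coefficients of $f$ range over a family parametrised by $\Pic(S_d)\otimes\CC$, and ``general'' means that they are chosen generic in this family.

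First I would compute $\varepsilon(S_d)$ directly. For a generic Laurent polynomial with Newton polygon $\Delta$ and $0$ in the interior of $\Delta$, the critical points in $(\CC^*)^2$ are finite in number, nondegenerate, and total $2\,\mathrm{Area}(\Delta)$; sharpening the genericity once more, one may also assume their critical values are pairwise distinct. By the Morse lemma each such point contributes a fibre of $w$ with a single ordinary double point there and no other singularity, and conversely every singular fibre of $w$ arises this way, so $\varepsilon(S_d)=2\,\mathrm{Area}(\Delta)$. Since $\Delta$ is reflexive, Pick's formula (the interior containing the one lattice point $0$) gives $2\,\mathrm{Area}(\Delta)=|\partial\Delta\cap\ZZ^2|$, and combining the ``$12$'' identity for reflexive polygons, $|\partial\Delta\cap\ZZ^2|+|\partial\Delta^{*}\cap\ZZ^2|=12$, with $|\partial\Delta^{*}\cap\ZZ^2|=2\,\mathrm{Area}(\Delta^{*})=(-K_{S_d})^{2}=d$ --- the polar dual $\Delta^{*}$ being the anticanonical polytope, whose normalised volume is the anticanonical degree, a deformation invariant --- yields $\varepsilon(S_d)=12-d$.

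As a geometric cross-check I would argue on the total space. Compactifying the base produces a relatively minimal elliptic fibration $\bar w\colon\bar Y\to\PP^1$, and one verifies that $\bar Y$ is a rational elliptic surface: it is the relatively minimal model of the blow-up of $S_d$ (or of its toric degeneration) along the length-$d$ base locus of the corresponding anticanonical pencil, so $K_{\bar Y}^{2}=0$, $q=p_g=0$, and $-K_{\bar Y}$ is the fibre class; hence $\chi_{\mathrm{top}}(\bar Y)=12$. A smooth fibre contributes $0$ and an $I_1$ fibre contributes $1$ to $\chi_{\mathrm{top}}$, so the fibre of $\bar w$ over $\infty$ --- the one coming from the toric boundary of $\Delta$, which is \emph{not} a fibre of $w\colon Y\to\CC$ --- has Euler number $12-\varepsilon(S_d)=d$. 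As $f$ is a Laurent polynomial this fibre has unipotent monodromy, hence is of Kodaira type $I_d$, consistently with the Shioda--Tate relation $\rk\mathrm{MW}(\bar Y)=10-(d+1)=9-d=\rk\Pic(S_d)-1$.

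The crux will be the genericity input of the second paragraph: for the \emph{general} Landau--Ginzburg model --- generic within the specific $\Pic(S_d)\otimes\CC$-family, which a priori is a proper subfamily of all Laurent polynomials on $\Delta$ --- one must make sure that all $2\,\mathrm{Area}(\Delta)$ critical points remain nondegenerate, lie in pairwise distinct fibres, and that none of the solutions of $x\partial_x f=y\partial_y f=0$ escapes to the toric boundary, so that the Bernstein--Kushnirenko bound is attained. For the toric del Pezzo surfaces (degrees $6$, $7$ and $8$, the two surfaces of degree $8$ included) this is a finite check on the explicit polynomials of~\cite{AKO06, Prz17}; for the remaining surfaces one transports the count from the toric degeneration, which is exactly where the general theory of toric Landau--Ginzburg models enters.
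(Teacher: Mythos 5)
The paper does not prove this statement itself --- it is quoted from~\cite{AKO06} (see also~\cite{Prz17}), where the argument is essentially your ``cross-check'': one realises the mirror of $S_d$ as a rational elliptic fibration with an $I_d$ fibre at infinity, obtained from the compactified mirror of $\PP^2$ (an $I_9$ fibre plus three nodal fibres) by successively trading components of the fibre at infinity for nodal fibres in the finite part, one for each blown-up point. So your secondary argument is the one that actually carries the general case, while your primary argument has a genuine gap: the Newton polygon of a Laurent polynomial mirror of $S_d$ is reflexive only for $d\ge 3$. Indeed, your own chain of identities gives $2\,\mathrm{Area}(\Delta)=|\partial\Delta\cap\ZZ^2|=12-d$, and a reflexive polygon has at most $9$ boundary lattice points (the sixteen reflexive polygons have $3\le|\partial\Delta\cap\ZZ^2|\le 9$, equivalently Gorenstein toric del Pezzo surfaces have degree between $3$ and $9$). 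Hence for $d=1,2$ there is no reflexive $\Delta$ with the required area, the toric degenerations are only $\QQ$-Gorenstein, Pick's formula no longer reads $2\,\mathrm{Area}(\Delta)=|\partial\Delta\cap\ZZ^2|$ once $\Delta^{*}$ fails to be a lattice polygon, and the ``12'' identity is unavailable. The lattice-point computation therefore proves the statement only for $d\ge 3$ (and even there only modulo the genericity caveat you flag in your last paragraph, which for the non-toric surfaces you do not actually resolve: being generic in the $(10-d)$-dimensional mirror family is strictly weaker than being generic among all Laurent polynomials supported on $\Delta$, so attainment of the Bernstein--Kushnirenko bound and distinctness of critical values need a separate argument, e.g.\ by exhibiting one good member and invoking openness plus constancy of $\varepsilon$ along the family).

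The remedy is to promote the elliptic-surface argument from a consistency check to the proof, which is what~\cite{AKO06} does: once one knows that the compactified general mirror $\bar w\colon\bar Y\to\PP^1$ is a rational elliptic surface whose fibre at infinity is of type $I_d$ and whose finite singular fibres are irreducible and nodal (this last point is exactly the meaning of ``general'' in the statement and is established by the explicit deformation construction, not by a Newton-polygon count), the equality $\varepsilon(S_d)=\chi_{\mathrm{top}}(\bar Y)-\chi(I_d)=12-d$ follows for all $1\le d\le 9$ at once. In that form your write-up is correct; as currently organised, the case $d\le 2$ is not covered by the argument you designate as primary.
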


Note that when we deform a general Landau--Ginzburg model to the anticanonical one most of isolated singularities of fibers of the models usually collide making a reducible fiber.

On the other hand, 
a smooth del Pezzo surface has a full exceptional collection
which is an extension of the standard one.
For a quadric surface this follows from by~\cite{Ka88}, 
and for blow ups of $\PP^2$ this is given by Orlov's blow-up theorem.
Moreover, the following holds.

\begin{theorem}[{\cite[Theorem 6.11]{KO95}}]
\label{theorem:del Pezzo}
Any exceptional collection in the derived category of del Pezzo surface is a part of full exceptional collection.
\end{theorem}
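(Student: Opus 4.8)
The plan is to reduce the theorem to an existence statement about orthogonal complements, handle the purely categorical bookkeeping by a rank count, and then feed in the structure theory of exceptional sheaves on del Pezzo surfaces for the remaining geometric input. Write $S$ for a del Pezzo surface of degree $d$. Since $\Db(S)$ carries a full exceptional collection (by~\cite{Ka88} for a quadric surface, by Orlov's blow-up theorem for a blow-up of $\PP^2$), the group $K_0(\Db(S))$ is free abelian of rank $12-d$, and every full exceptional collection has length $12-d$. Given an exceptional collection $(E_1,\dots,E_k)$, let $\mathcal{A}=\langle E_1,\dots,E_k\rangle\subset\Db(S)$ be the triangulated subcategory it generates. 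Because $\Db(S)$ has finite-dimensional $\mathrm{Hom}$-spaces and a Serre functor, a subcategory generated by an exceptional collection is admissible, so there is a semiorthogonal decomposition $\Db(S)=\langle\mathcal{A}^\perp,\mathcal{A}\rangle$ with $\mathcal{A}^\perp=\{F\mid\mathrm{Hom}^\bullet(\mathcal{A},F)=0\}$ again admissible. On Grothendieck groups this yields $K_0(\mathcal{A}^\perp)\oplus K_0(\mathcal{A})=K_0(\Db(S))$ with $K_0(\mathcal{A})\cong\ZZ^k$, so $K_0(\mathcal{A}^\perp)\cong\ZZ^{12-d-k}$, which is nonzero as soon as $k<12-d$.

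The crux is then the claim: \emph{whenever $\mathcal{A}^\perp\ne 0$ it contains an exceptional object $E$}. Granting this, $(E,E_1,\dots,E_k)$ is again an exceptional collection --- the new semiorthogonality relations $\mathrm{Hom}^\bullet(E_i,E)=0$ hold precisely because $E\in\mathcal{A}^\perp$ --- now of length $k+1$. Iterating, each step raises the length by one and drops the rank of $K_0$ of the orthogonal complement by one, so after $12-d-k$ steps the orthogonal complement has zero Grothendieck group, hence (being admissible) is zero, and the exceptional collection built so far is full. The original collection sits inside it as the terminal segment, which is the assertion.

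To produce an exceptional object in $\mathcal{A}^\perp$ one invokes the classification of exceptional objects on del Pezzo surfaces developed in the same paper: every exceptional object of $\Db(S)$ is a shift of an exceptional sheaf; every exceptional sheaf is either a $(-K_S)$-stable vector bundle or a torsion sheaf $\cO_C(a)$ on a $(-1)$-curve $C$; and, crucially, since $-K_S$ is ample, Serre duality gives $\Ext^2(F,F)=\mathrm{Hom}(F,F\otimes\omega_S)^\ast=0$ for every $(-K_S)$-semistable sheaf $F$, because tensoring by $\omega_S$ strictly decreases the slope. Hence for a stable sheaf exceptionality is equivalent to rigidity $\Ext^1(F,F)=0$, i.e. to the numerical condition $\chi(F,F)=1$. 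One now takes an object of $\mathcal{A}^\perp$ of minimal complexity: first, using mutations, one reduces to the case where the objects in play are sheaves rather than arbitrary complexes; then one minimizes (say) the number of Harder--Narasimhan and Jordan--H\"older constituents together with $\sum_i\dim\mathrm{Hom}^i(F,F)$, and shows that a non-exceptional minimal object would, via cones of evaluation morphisms and the vanishing $\Ext^2(F,F)=0$, yield a strictly simpler nonzero object of $\mathcal{A}^\perp$ --- a contradiction. The torsion case is read off directly from the list of exceptional torsion sheaves.

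The main obstacle is exactly this existence step: producing the exceptional object inside the orthogonal complement. It is genuinely not formal --- the analogous statement fails for higher-dimensional varieties whose derived categories harbour phantom or quasi-phantom subcategories --- and it rests on surface-specific geometry: $(-K_S)$-stability and rigidity of sheaves, the classification of exceptional bundles by their Chern characters (in the spirit of the $\PP^2$ theory of Dr\'ezet--Le~Potier and Gorodentsev--Rudakov), the reduction from complexes to sheaves by mutations, and the description of exceptional torsion sheaves on $(-1)$-curves. Everything categorical --- admissibility, the Grothendieck-group rank count, the induction on length --- is routine by comparison.
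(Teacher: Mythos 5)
The paper does not prove this statement; it is imported verbatim from \cite{KO95}, so there is no internal proof to compare against. Your architecture --- reduce to the claim that every nonzero admissible subcategory of $\Db(S)$ contains an exceptional object, then iterate using the $K_0$ rank count $12-d$ --- is indeed the right skeleton, and it matches how the result is actually established: the whole content lives in the classification of exceptional objects (shifts of sheaves; stable bundles or torsion sheaves on $(-1)$-curves), the vanishing $\Ext^2(F,F)=0$ from $(-K_S)$-stability, and the mutation machinery, all of which you correctly name as the non-formal input.

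Two criticisms. First, a genuine logical slip in your termination step: ``the orthogonal complement has zero Grothendieck group, hence (being admissible) is zero'' is false as a general principle --- an admissible subcategory with trivial $K_0$ is precisely a phantom, whose possible existence you yourself invoke two sentences later. The correct way to finish is to apply your own key claim once more: if the complement were nonzero it would contain an exceptional object $E$, and $\chi(E,E)=1$ forces $[E]\neq 0$ in $K_0(\mathcal{A}^\perp)=0$, a contradiction. The fix is one line, but as written the argument is circular about exactly the phenomenon that makes the theorem nontrivial. Second, the heart of the matter --- producing an exceptional object in $\mathcal{A}^\perp$ --- is only gestured at. The ``minimize the number of Harder--Narasimhan and Jordan--H\"older constituents and take cones of evaluation maps'' paragraph is a plausible-sounding template, but it is not an argument: you do not specify the complexity function, why the cone of the evaluation map stays in $\mathcal{A}^\perp$ and is strictly simpler, or how the reduction from complexes to sheaves via mutations is carried out. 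Since this single step is where all of \cite{KO95} happens, the proposal should be read as a correct reduction plus an accurate bibliography of the needed ingredients, not as a proof.
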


Theorems~\ref{theorem: LG for del Pezzo} and~\ref{theorem:del Pezzo}, and Proposition~\ref{proposition:standard collection} imply
the following.

\begin{corollary}
\label{corollary:anticanonical LG for del Pezzo}
Let $\varepsilon_{gen}(S_d)$ be the number of fibers of ordinary double points of general Landau--Ginzburg model for $S_d$.
One has $\omega(S_d)=\omega_\cO(S_d)=\varepsilon_{gen}(S_d)=12-d$. 
\end{corollary}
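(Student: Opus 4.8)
The plan is to chain together the three quoted ingredients with the standard fact that a full exceptional collection on a smooth projective variety has length equal to $\rk K_0$. First, the equality $\varepsilon_{gen}(S_d)=12-d$ is simply a restatement of Theorem~\ref{theorem: LG for del Pezzo}: a general Landau--Ginzburg model of $S_d$ has $12-d$ singular fibers, each carrying a single ordinary double point, so the total number of isolated singular points of its fibers is $12-d$.

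Next I would check that every full exceptional collection in $\Db(S_d)$ has length $12-d$. One such collection exists: as recalled in the text, for the quadric surface it is Kapranov's collection~\cite{Ka88}, and for a blow-up of $\PP^2$ at $9-d$ points it is produced by iterating Orlov's blow-up theorem starting from Beilinson's collection, which yields $3+(9-d)=12-d$ objects (likewise $\FF_1$, the other degree-$8$ del Pezzo surface, receives a collection of $4$ objects). Since a full exceptional collection of length $\ell$ gives a semiorthogonal decomposition of $\Db(S_d)$ into $\ell$ copies of $\Db(\mathrm{pt})$, one has $K_0(S_d)\cong\ZZ^{\ell}$; and $\rk K_0(S_d)=12-d$ because $S_d$ is a rational surface with $\rk\Pic(S_d)=10-d$ and trivial Chow group of zero-cycles, so that $\rk K_0(S_d)=1+(10-d)+1$. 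Hence every full exceptional collection has length $12-d$.

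Given this, $\omega(S_d)=12-d$ follows at once: the explicit length-$(12-d)$ collection shows $\omega(S_d)\ge 12-d$, while by Theorem~\ref{theorem:del Pezzo} any exceptional collection extends to a full one of length $12-d$, so none can be longer. For $\omega_\cO(S_d)$, by Proposition~\ref{proposition:standard collection} the standard collection $\cO_{S_d},\ldots,\cO_{S_d}(i_{S_d}-1)$ is exceptional, so by Theorem~\ref{theorem:del Pezzo} it is a part of a full exceptional collection of length $12-d$, which is itself an extension of the standard one; thus $\omega_\cO(S_d)\ge 12-d$. The opposite inequality $\omega_\cO(S_d)\le\omega(S_d)=12-d$ is clear, since an extension of the standard collection is in particular an exceptional collection. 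Combining these gives $\omega(S_d)=\omega_\cO(S_d)=\varepsilon_{gen}(S_d)=12-d$.

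I do not expect a genuine obstacle: the only non-elementary input, Theorem~\ref{theorem:del Pezzo}, is quoted, and everything else is bookkeeping around $\rk K_0$. The single point requiring a little care is the degree-$8$ case, where there are two del Pezzo surfaces --- the quadric (of index $2$) and $\FF_1$ (of index $1$) --- but both have $\rk K_0=4$, and in both the standard collection completes to a length-$4$ collection, so the uniform answer $12-d$ is unaffected by the value of $i_{S_d}$.
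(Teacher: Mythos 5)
Your argument is correct and is essentially the proof the paper intends: Theorem~\ref{theorem: LG for del Pezzo} gives $\varepsilon_{gen}(S_d)=12-d$, the equality $\rk K_0(S_d)=\sum h^{p,p}(S_d)=12-d$ pins down the length of every full exceptional collection, and Theorem~\ref{theorem:del Pezzo} together with the explicit Kapranov/Orlov collections yields $\omega(S_d)=\omega_\cO(S_d)=12-d$. The only step you pass over too quickly is the assertion that the full collection containing the standard one is automatically ``an extension of the standard one'' in the sense of Definition~\ref{definition:omegi}, i.e.\ with $\cO_{S_d},\ldots,\cO_{S_d}(i_{S_d}-1)$ as its final block --- for the quadric, Kapranov's collection $\cO,\cO(1,0),\cO(0,1),\cO(1,1)$ is not of this shape and one must first twist it to, say, $\cO(-1,0),\cO(0,-1),\cO,\cO(1,1)$ (or mutate); this is a harmless fix that the paper likewise leaves implicit.
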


Now let us consider the threefold case. We refer to a smooth member of a family number $n$ from~\cite[Table in \S12.2]{IP99} of Picard rank one Fano threefolds as $X_{n}$.
(Note that the order of the families in the original classification paper differs from the usual one we use, see~\cite[Table 6.5]{Is78}.)
For readers' convenience we denote a smooth Fano threefold of index one and degree $k$ by $V_k$, of index $2$ and degree $8m$
by $Y_m$, and the quadric by $Q$.
In this paper we prove the following.

\begin{theorem}
\label{theorem:ODP for LG}
Let $X=X_n$ be a smooth Fano threefold of Picard rank one. The isolated singularities
of fibers of Calabi--Yau compactification $(Y,w)$ of a standard toric Landau--Ginzburg model for $X$
are ordinary double points, and their number $\varepsilon(X)$ is
\begin{itemize}
  \item $\varepsilon(X)=1$ if $n=1-4$, i.\,e. for  $V_2,V_4,V_6,V_8$;
  \item $\varepsilon(X)=2$ if $n=5-9,11-14$, i.\,e. for $V_{10},V_{12}, V_{14},V_{16}, V_{18}, Y_1,Y_2,Y_3,Y_4$;
  \item $\varepsilon(X)=4$ if $n=10,15,16,17$, i.\,e. for $V_{22},Y_5,Q,\PP^3$.
\end{itemize}
  Moreover, in the latter case $(Y,w)$ is a Lefschetz pencil. 
\end{theorem}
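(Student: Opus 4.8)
The plan is to reduce the statement to an explicit, family-by-family computation with the standard toric Landau--Ginzburg models, whose defining Laurent polynomials $f_X\in\CC[x^{\pm1},y^{\pm1},z^{\pm1}]$ are available for all seventeen families $X_n$ (see~\cite{Prz13,Prz18} and the references therein). For a fixed $X$ the first step is to write down $f_X$ and compute its critical locus on the torus, by solving $x\,\partial_x f_X=y\,\partial_y f_X=z\,\partial_z f_X=0$ on $(\CC^*)^3$, recording the critical values as well. The second step is to separate these critical points into those lying over the \emph{central} value $t_0$ --- the value over which the Calabi--Yau compactification $(Y,w)$ acquires a reducible or non-reduced fiber, which by the discussion above happens exactly when $h^{1,2}(X)>0$ --- and those lying over other values: the former get absorbed, after compactification, into the non-isolated singular locus of the central fiber and do not contribute to $\varepsilon(X)$, while the latter will turn out to be the isolated singularities of fibers of $w$.

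For the remaining critical points one must check nondegeneracy, so that each produces an ordinary double point on its fiber: by the holomorphic Morse lemma a nondegenerate critical point of $w$ on the smooth threefold $Y$ has local model $z_1^2+z_2^2+z_3^2$, whose zero fiber is an ordinary double point. This can be done either by a direct Hessian computation at each critical point found above, or, more conceptually, via Dolgachev--Nikulin duality: the non-central fibers of $w$ are K3 surfaces of Picard rank $19$, hence their only singularities are du Val, and a degeneration inside a smooth threefold of a family of K3 surfaces of Picard number $19$ can acquire at most one $(-2)$-exceptional curve on minimal resolution, since the Picard number of a K3 surface is at most $20$; so at most one $A_1$ point and nothing worse. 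Combined with~\cite{Prz13} --- which guarantees that the singular fibers of $(Y,w)$ are at most one central fiber together with finitely many fibers carrying only isolated singularities, and that this structure is independent of the model, so that the compactification introduces no further isolated critical points of $w$ on the non-central fibers --- this shows that the isolated singularities of fibers of $w$ are exactly the non-central torus critical points of $f_X$ and are all ordinary double points. Running the computation through the list then yields $\varepsilon(X)=1$ for $V_2,V_4,V_6,V_8$, $\varepsilon(X)=2$ for $V_{10},V_{12},V_{14},V_{16},V_{18},Y_1,Y_2,Y_3,Y_4$, and $\varepsilon(X)=4$ for $V_{22},Y_5,Q,\PP^3$.

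Finally, for $V_{22},Y_5,Q,\PP^3$ one has $h^{1,2}(X)=0$, so there is no central fiber and every singular fiber of $(Y,w)$ carries only isolated, hence nodal, singularities. Since two nodes in one fiber would push its minimal resolution to Picard number $21$, each singular fiber contains exactly one ordinary double point; in particular the four critical values are pairwise distinct and $(Y,w)$ is a Lefschetz pencil. One may also see this directly from the explicit polynomial: e.g.\ for $\PP^3$, $f=x+y+z+(xyz)^{-1}$ has its four critical points over the four distinct values $4\zeta$ with $\zeta^4=1$.

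I expect the main obstacle to be control of the compactification: making sure that the smooth total space $Y$ carries no critical points of $w$ ``at infinity'' on the non-central fibers, and that everything non-du-Val coming from $f_X$ is genuinely confined to the central value $t_0$. This is where the interplay between~\cite{Prz13} and the Dolgachev--Nikulin / Picard-rank-$19$ description is essential. The residual difficulty is computational and is heavier only for the families with bulkier Newton polytopes (such as $V_2$); there the work can be shortened by comparing the number of distinct nondegenerate critical points found against the total number of critical points counted with multiplicity, which is determined by the Newton polytope of $f_X$.
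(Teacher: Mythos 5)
There is a genuine gap. Your proposal rests on the claim that the isolated singularities of fibers of $(Y,w)$ are exactly the critical points of the Laurent polynomial $f_X$ on the torus $(\CC^*)^3$ lying over non-central values, and that the compactification ``introduces no further isolated critical points of $w$ on the non-central fibers.'' This is false for several families, and the paper's own computations show it. The actual proof compactifies $\{f_X=\lambda\}$ to a pencil of quartics $\cL_\lambda\subset\PP^3$, lists \emph{all} singular points of the pencil members --- including those on the boundary and on the base locus --- and then tracks them through the resolution of the base locus. Du Val points whose type is constant in $\lambda$ are resolved crepantly and contribute nothing, but points whose du Val type jumps at special values of $\lambda$, or ordinary double points sitting on the base locus, produce additional isolated nodes in the compactified fibers. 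For example, for the quadric only three of the four nodes come from the torus (the fourth appears over $\lambda=0$ after blowing up the $\dvD_4$ point $[1:0:0:-1]$); for $V_{22}$ the fourth node comes from the $\dvA_2$ point $[1:1:-1:0]$ on the base locus over $\lambda=0$; for $V_{14}$ one of the two nodes arises only after blowing up the base curve through $[0:1:-1:0]$, where the singularity type jumps to $\dvA_3$ at $\lambda=-1$. Your method would therefore return $\varepsilon=3$, $3$, and $1$ respectively for these three families, contradicting the statement. Nothing in~\cite{Prz13} rules out such boundary contributions; that reference only controls the structure of the reducible fiber and the independence of the count from the choice of Calabi--Yau compactification (Lemma~\ref{lemma: odp for all}).

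The parts of your argument that do survive are the soft ones: the Dolgachev--Nikulin observation that non-central fibers are K3 surfaces of Picard rank $19$, hence have at most du Val singularities and in fact at most one node each, which is essentially the paper's remark after Lemma~\ref{lemma: odp for all} and does yield the Lefschetz-pencil claim once $\varepsilon(X)=4$ and $h^{1,2}(X)=0$ are known. But the count itself cannot be reduced to a torus critical-point computation plus a Hessian check; it requires the case-by-case resolution of the base locus of the quartic pencil (or, for the complete-intersection families $n=2,3,4,13,14$, the citation of~\cite{Prz25}), which is precisely where the content of Lemmas~\ref{lemma:P3}--\ref{lemma:X2} lies.
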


In Table~\ref{table:estimates} we present known estimates for maximal lengths of exceptional collections.
This, together with Lemma~\ref{lemma:omega for index 2}, the references in Table~\ref{table:estimates},
and Proposition~\ref{proposition:quadric and cubic}, gives the following.

\begin{corollary}
\label{corrollary: coincidence}
Let $X=X_n$ be a smooth Fano threefold of Picard rank one. One has
$$
4\ge \omega(X)\ge\omega_\cO(X)\ge \varepsilon (X).
$$
Moreover, $\omega(X)=\varepsilon (X)$
for $n=10,15,16,17$ (that is for $V_{22},Y_5,Q,\PP^3$, that is when $\Db(X)$ has a full exceptional collection), and $\omega_\cO(X)= \varepsilon (X)$ for $n=11,12,13,14$ (that is for $Y_1,Y_2,Y_3,Y_4$).
On the other hand, for $X=X_3$ (that is, for $V_6$) one has $\varepsilon(X)=2$, but it is expected that $\omega(X)=\omega_\cO(X)=2$ for general $X$,
while $\omega(X)=\omega_\cO(X)=1$ for certain special varieties in the deformation family of $X_6$.
\end{corollary}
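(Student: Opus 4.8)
The plan is to split the statement into three independent parts --- the inequality chain $4\ge\omega(X)\ge\omega_\cO(X)\ge\varepsilon(X)$, the two families of equalities, and the closing remark about $V_6$ (which records only an expectation) --- and to deduce each of them from Theorem~\ref{theorem:ODP for LG}, the bounds tabulated in Table~\ref{table:estimates}, Lemma~\ref{lemma:omega for index 2} and Proposition~\ref{proposition:quadric and cubic}. First I would establish the chain. The inequality $\omega(X)\ge\omega_\cO(X)$ is immediate from Definition~\ref{definition:omegi}: an extension of the standard collection $\cO_X,\dots,\cO_X(i_X-1)$ --- which is exceptional by Proposition~\ref{proposition:standard collection} --- is in particular an exceptional collection in $\Db(X)$, so its length is at most $\omega(X)$. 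For $\omega(X)\le4$ I would argue via Hochschild homology: for a Picard rank one Fano threefold $X$ one has $\dim_\CC HH_0(\Db(X))=\sum_p h^{p,p}(X)=4$, and in any semiorthogonal decomposition $\Db(X)=\langle E_1,\dots,E_\ell,\mathcal{A}\rangle$ by exceptional objects with residual part $\mathcal{A}$, additivity of Hochschild homology gives $4=\ell+\dim_\CC HH_0(\mathcal{A})\ge\ell$; this is also the bound recorded in Table~\ref{table:estimates}. For $\omega_\cO(X)\ge\varepsilon(X)$ I would go through the trichotomy of Theorem~\ref{theorem:ODP for LG}. If $\varepsilon(X)=1$ there is nothing to prove, since the standard collection already has length $i_X\ge1$. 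If $\varepsilon(X)=2$: for the index-two threefolds $Y_1,\dots,Y_4$ the standard pair $\cO_X,\cO_X(1)$ already realizes length $2$, while for the index-one threefolds $V_{10},\dots,V_{18}$ one adjoins to $\cO_X$ the exceptional bundle coming from Mukai's homogeneous-space realization, as recorded in Table~\ref{table:estimates}. If $\varepsilon(X)=4$: the Beilinson collection on $\PP^3$ has length $4$; on $Q$ one adjoins the spinor bundle to $\cO_Q,\cO_Q(1),\cO_Q(2)$; on $Y_5$ and $V_{22}$ one uses Orlov's and Kuznetsov's four-term collections respectively --- each extending the corresponding standard collection, again as tabulated.

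Next I would read off the two equalities. For $n\in\{10,15,16,17\}$ we have $\varepsilon(X)=4$ by Theorem~\ref{theorem:ODP for LG}, so the chain collapses to $\omega(X)=\omega_\cO(X)=4=\varepsilon(X)$; the parenthetical claim that these are precisely the Picard rank one Fano threefolds with a full exceptional collection follows because for each of the four such a collection is classical (Beilinson on $\PP^3$, the spinor-bundle collection on $Q$, Orlov on $Y_5$, Kuznetsov on $V_{22}$; Proposition~\ref{proposition:quadric and cubic} also covers $Q$ directly), whereas for every other family $h^{1,2}(X)>0$, hence $HH_1(\Db(X))\ne0$ and a full exceptional collection is impossible. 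For $n\in\{11,12,13,14\}$, i.e.\ $Y_m$ with $1\le m\le4$, the chain already gives $\omega_\cO(Y_m)\ge\varepsilon(Y_m)=2$, and the reverse inequality $\omega_\cO(Y_m)\le2$ is exactly Lemma~\ref{lemma:omega for index 2} (with Proposition~\ref{proposition:quadric and cubic} handling the cubic threefold $m=3$): the pair $\cO_{Y_m},\cO_{Y_m}(1)$ admits no nontrivial extension because its orthogonal complement in $\Db(Y_m)$ carries no exceptional object --- it is the derived category of a curve of positive genus for $m=2,4$, a fractional Calabi--Yau category for the cubic threefold $m=3$, and a twisted analogue with the same property for $m=1$. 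Hence $\omega_\cO(Y_m)=2=\varepsilon(Y_m)$. The closing assertion about $V_6$ is not proved here: one only recalls the construction of a length-two extension of $\cO_{V_6}$ for a general member (giving $\omega(V_6)=\omega_\cO(V_6)=2$ under the expected upper bound) and the degeneration to $\omega(V_6)=\omega_\cO(V_6)=1$ for the special members where this construction fails.

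The step I expect to carry the real weight --- although in this Corollary it is imported rather than reproved --- is the upper bound $\omega_\cO(Y_m)\le2$ for $m\le4$, i.e.\ the absence of exceptional objects in the residual categories of the index-two threefolds. For $m=2,4$ this is the elementary fact that $\Db(C)$ for a curve $C$ of genus $\ge1$ has no exceptional object, since every object is assembled from shifts of line bundles and $\Ext^1$ of a line bundle with itself is nonzero; but for $m=1,3$ it genuinely uses the (fractional) Calabi--Yau structure of the residual category, which prevents $\Ext^{>0}(E,E)$ from vanishing for a would-be exceptional object $E$. This is the content of Lemma~\ref{lemma:omega for index 2} and Proposition~\ref{proposition:quadric and cubic}; granting them, the Corollary is the assembly of Theorem~\ref{theorem:ODP for LG}, the tabulated lower and upper bounds, Lemma~\ref{lemma:omega for index 2} and Proposition~\ref{proposition:quadric and cubic}, together with the definitional inequality $\omega\ge\omega_\cO$.
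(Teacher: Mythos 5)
Your proof is correct and follows the paper's own route: the corollary is stated there with no separate argument beyond assembling Theorem~\ref{theorem:ODP for LG}, the bounds of Table~\ref{table:estimates}, Remark~\ref{remark: omega ge omegaO} (for $\omega(X)\ge\omega_\cO(X)$ and the Hochschild-homology bound $\omega(X)\le 4$), Lemma~\ref{lemma:omega for index 2}, and Proposition~\ref{proposition:quadric and cubic}, and your case-by-case verification of $\omega_\cO(X)\ge\varepsilon(X)$ against the trichotomy of Theorem~\ref{theorem:ODP for LG} is exactly the intended reading. Two small inaccuracies, neither of which affects validity: Proposition~\ref{proposition:quadric and cubic} concerns $X_3=V_6$, the intersection of a quadric and a cubic in $\PP^5$, not the cubic threefold $Y_3$ (family $13$), so it plays no role in the bound $\omega_\cO(Y_m)\le 2$ --- Lemma~\ref{lemma:omega for index 2} covers all four $Y_m$ uniformly; and the mechanism behind that lemma in the paper is not a (fractional) Calabi--Yau structure on the residual category but a computation of the Euler form on its numerical Grothendieck group, shown to be non-positive definite and hence to admit no class $v$ with $\chi(v,v)=1$ (the genus-two-curve description appears only as a remark for $d=4$). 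Since you import the lemma rather than reprove it, this is only a mislabelling of its proof. Finally, the claim $\varepsilon(X_3)=2$ in the statement conflicts with Theorem~\ref{theorem:ODP for LG} (which gives $\varepsilon(X)=1$ for $n=1,\dots,4$) and, for special members with $\omega_\cO(X)=1$, with the displayed inequality itself; this is an inconsistency in the statement rather than in your argument, which reasonably treats the closing sentence as a recalled expectation.
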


Similar estimates hold for Fano complete intersections in any dimension. Recall
that in higher-dimensional case there is no known unique way to correspond Landau---Ginzburg
model to a Fano variety. However there is Givental's construction for Fano complete intersections
in toric varieties, which for the case of complete intersections in projective spaces
are birational to toric Landau--Ginzburg models, see, for instance,~\cite{ILP13} and~\cite{PSh15}.
We call them \emph{toric Landau--Ginzburg models of Givental's type}.

\begin{proposition}[{see~\cite{Ka88} and~\cite{Hu22}}]
\label{proposition:quadrics exceptional}
Let $X$ be a quadric hypersurface, and let $(Y,w)$ be a Calabi--Yau compactification
of its toric Landau--Ginzburg model of Givental's type.
The singularities of fibers of $(Y,w)$ are ordinary double points;
moreover, if $\varepsilon(X)$ is the number of such points, then
$$
\omega(X)=\omega_\cO(X)=\varepsilon(X).
$$
If $\dim(X)$ is odd, then all singular fibers of $w$ have a single singular point,
and if $\dim(X)$ is even, then all singular fibers have a single singular point
except for one fiber with two points.
\end{proposition}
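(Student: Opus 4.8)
Write $X=Q^{N}\subset\PP^{N+1}$ with $N=\dim X$; for $N\le 2$ the statement is checked directly (e.g.\ $Q^{1}=\PP^{1}$ and $Q^{2}=\PP^{1}\times\PP^{1}$), so assume $N\ge 3$, in which case $\Pic(X)=\ZZ$ and $i_{X}=N$. The plan is to compute the critical locus of the Givental-type superpotential explicitly on the torus, control its Calabi--Yau compactification, and match the resulting count of ordinary double points with Kapranov's exceptional collection.

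\textbf{The Landau--Ginzburg side.} Givental's construction gives an explicit Laurent polynomial $f_{X}$ in $N$ variables, of the shape ``sum of linear monomials plus a single fraction whose numerator is a perfect square'' (the square reflecting the degree $2$ of the hypersurface; see \cite{ILP13}, \cite{PSh15}, \cite{Hu22}); let $(Y,w)$ be a Calabi--Yau compactification of $f_{X}\colon(\CC^{*})^{N}\to\CC$. First I would solve $x_{i}\,\partial_{x_{i}}f_{X}=0$: the square in the numerator forces all linearly entering variables to be equal and reduces the system, after elimination, to a one-variable equation of the form $t^{2}=1$ together with a root-of-unity equation, producing a short explicit list of critical points on the torus. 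A direct Hessian computation, equivalently semisimplicity of the small quantum cohomology $QH^{*}(Q^{N})$, shows that each critical point is nondegenerate, hence contributes exactly one ordinary double point to its fibre. Next I would analyse the compactification: since $h^{p,q}(Q^{N})=0$ for $p\ne q$, the family $(Y,w)$ has no reducible or non-reduced fibre (no central fibre in the sense of the Introduction), so in particular every fibre singularity is isolated; the critical points that leave the torus sit over the boundary divisor $Y\setminus(\CC^{*})^{N}$, and one checks directly that they are again ordinary double points. The total number of ordinary double points is then $\rk K_{0}(Q^{N})=\dim_{\CC}H^{*}(Q^{N},\CC)$, which equals $N+1$ for $N$ odd and $N+2$ for $N$ even; finally, reading off the critical values from $f_{X}$ shows they are pairwise distinct when $N$ is odd (so $(Y,w)$ is a Lefschetz pencil, in agreement with Theorem~\ref{theorem:ODP for LG} for $N=3$) and that exactly two of them coincide when $N$ is even, which gives the stated distribution of singular points among the fibres.

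\textbf{The derived side.} By Kapranov \cite{Ka88}, $\Db(Q^{N})$ has a full exceptional collection
\[
\langle\,\mathcal{S},\ \cO,\ \cO(1),\ \dots,\ \cO(N-1)\,\rangle
\quad\text{if $N$ is odd},\qquad
\langle\,\mathcal{S}_{+},\ \mathcal{S}_{-},\ \cO,\ \cO(1),\ \dots,\ \cO(N-1)\,\rangle
\quad\text{if $N$ is even},
\]
where $\mathcal{S}$ (resp.\ $\mathcal{S}_{\pm}$) are spinor bundles; its length is $N+1$ (resp.\ $N+2$). Since $i_{X}=N$, the tail $\cO,\cO(1),\dots,\cO(N-1)$ is the standard exceptional collection of $\Db(Q^{N})$ (Proposition~\ref{proposition:standard collection}), so Kapranov's collection is an extension of it; hence $\omega_{\cO}(X)\ge\rk K_{0}(Q^{N})$. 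On the other hand the classes of the objects of any exceptional collection are linearly independent in $K_{0}(X)\otimes\QQ$, since their Euler pairing is upper triangular and unipotent, so $\omega(X)\le\rk K_{0}(X)$. Combining this with the previous paragraph,
\[
\varepsilon(X)\ =\ \rk K_{0}(Q^{N})\ \le\ \omega_{\cO}(X)\ \le\ \omega(X)\ \le\ \rk K_{0}(X),
\]
so all inequalities are equalities, which is the assertion $\omega(X)=\omega_{\cO}(X)=\varepsilon(X)$.

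\textbf{Main obstacle.} The derived-category bookkeeping and the torus computation are routine. The genuinely delicate step is the analysis of the Calabi--Yau compactification $(Y,w)$ along the boundary divisor: verifying that the critical points escaping the torus remain ordinary double points, that no worse (non-isolated) singularities are created, and that the count and the coincidences among critical values come out exactly as claimed. This is where one leans on a careful study of the compactified quadric Landau--Ginzburg model (cf.\ \cite{Hu22}) together with the vanishing $h^{p,q}(Q^{N})=0$ for $p\ne q$.
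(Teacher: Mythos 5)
Your plan is correct and is essentially the approach the paper intends: the proposition is stated with no proof of its own, deferring the Landau--Ginzburg computation to \cite{Hu22} and the derived-category side to \cite{Ka88}, and your two halves reconstruct exactly that --- explicit critical points of the Givental-type potential plus a boundary analysis of the Calabi--Yau compactification on one side, and Kapranov's spinor-bundle extension of the standard collection together with the upper bound $\omega(X)\le\rk K_0(X)$ on the other (the paper's Remark~\ref{remark: omega ge omegaO} gets the same bound via Hochschild homology). One small caution: the inference ``no reducible or non-reduced fibre, hence all fibre singularities are isolated'' is not valid as stated (an irreducible reduced fibre can be singular along a curve); establishing that the boundary contributes only ordinary double points is, as you correctly flag in your final paragraph, precisely the content one must take from \cite{Hu22}.
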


\begin{remark}
In particular, since fibers $(Y,w)$ has ordinary double points as singularities, the categories $\Db(X)$ and $\mathrm{FS}(Y,w)$ have full exceptional collections of the same length.
\end{remark}

\begin{remark}
Let $X$ be a quadric hypersurface of dimension $n$. By~\cite{Ka88}, if $n$ is odd, then
$$
\Db(X)=\langle\cL, \cO_X,\ldots, \cO_X(n-1)\rangle,
$$
and if $n$ is even, then
$$
\Db(X)=\langle\cL_1, \cL_2, \cO_X,\ldots, \cO_X(n-1)\rangle,
$$
where $\cL_i$ are spinor bundles, and the exceptional pair $\langle \cL_1,\cL_2\rangle$ is completely orthogonal, that is $\Ext^k(\cL_i, \cL_j)=0$ for any $k$ and $i\neq j$.
\end{remark}

\begin{theorem}[\cite{Prz25}]
\label{theorem:complete intersections}
Let $X$ be a smooth Fano complete intersection of index $i_X$, 
and let $(Y,w)$ be a Calabi--Yau compactification
of its toric Landau--Ginzburg model of Givental's type.
The singular fibers of $(Y,w)$ are one fiber with possibly (and usually) non-isolated singularities,
and $i_X$ fibers with a single ordinary double point.
\end{theorem}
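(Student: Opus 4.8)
The plan is to reduce the statement to an explicit Laurent polynomial, and then to split the analysis into a computation inside the torus and a study of the Calabi--Yau compactification along the boundary. First I would make the set-up explicit: write $X=X_{d_1,\dots,d_k}\subset\PP^N$ with $n=\dim X=N-k$ and $i_X=N+1-\sum_a d_a$, and recall the passage from Givental's complete-intersection mirror to a Laurent polynomial $f_X$ in $n$ variables, following \cite{ILP13} and \cite{PSh15} (see also \cite{Prz18}). After resolving the $k$ linear relations defining Givental's model and applying Przyjalkowski's reduction procedure, $f_X$ takes a normal form which is a sum of $k+1$ ``blocks'': $k$ of them of the shape $L_a(\mathbf{x})^{d_a}/m_a(\mathbf{x})$ with $L_a$ affine-linear and $m_a$ a Laurent monomial, and one ``index block'' of the shape $\sum_i x_i+1/\prod_i x_i$ after relabelling the variables. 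This step matters because the Newton polytope of $f_X$ is reflexive and $f_X$ is of the Minkowski / nef-partition type for which a Calabi--Yau compactification exists and, by \cite{Prz13}, is essentially unique and has at most one reducible or non-reduced fiber. Hence it suffices to locate the singular fibers coming from $(\CC^*)^n$ and to check that the boundary contributes nothing else.

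Second, I would find the critical points of $f_X$ on $(\CC^*)^n$ by solving $x_i\,\partial f_X/\partial x_i=0$ for $i=1,\dots,n$. The block structure, together with the quasi-homogeneity it carries, forces at a critical point the variables inside each block to be expressed through a single ``master'' variable, and the system then collapses to one polynomial equation of degree exactly $i_X$ in that variable with nonzero constant term (for $X=\PP^n$ it is of the form $\zeta^{n+1}=\mathrm{const}$). It therefore has $i_X$ distinct roots in $\CC^*$, each yielding exactly one point of $(\CC^*)^n$. Then I would (i) evaluate the Hessian of $f_X$ at each of these points and check that it is nondegenerate, so that by the Morse lemma the fiber of $w$ through such a point has an ordinary double point there; and (ii) compute the critical value as a function of the master root and show that it is injective on the root set, so that the $i_X$ ordinary double points lie in $i_X$ pairwise distinct fibers.

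Third --- and this is where the real work lies --- I would control the Calabi--Yau compactification $(Y,w)$ along the boundary divisor. One has to show that $(Y,w)$ can be chosen so that over $\CC$ its fibers are the closures of the torus fibers of $f_X$, smooth away from the $i_X$ ordinary double points found above, except for at most one distinguished fiber, the \emph{central} one, which collects all the non-isolated singularities. Concretely this is a toric resolution statement about $f_X$ near the faces of its Newton polytope: for a complete intersection no additional isolated fiber singularities are created at the boundary, and the degenerate behaviour there is governed by a single face of the polytope. I would deduce it from the Minkowski-type structure of $f_X$ together with the compactification results of \cite{Prz13} and \cite{Prz18}. This step is substantially more delicate than the torus critical-point count, which, once the normal form of $f_X$ is available, is a clean if somewhat lengthy elimination. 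Putting the three parts together yields the description of the singular fibers of $(Y,w)$ claimed in the theorem.
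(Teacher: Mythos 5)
First, note that this paper does not actually prove Theorem~\ref{theorem:complete intersections}: it is imported from~\cite{Prz25}, so there is no in-paper argument to compare yours against line by line; the closest analogues are the explicit threefold computations in Lemmas~\ref{lemma:P3}--\ref{lemma:X2}. Your torus-side analysis (steps 1 and 2) is correct and is essentially forced: with $f_X=\prod_a\bigl(1+\sum_j x_{a,j}\bigr)^{d_a}/\bigl(\prod_{a,j}x_{a,j}\prod_i y_i\bigr)+\sum_i y_i$, the equations $x_{a,j}\,\partial_{x_{a,j}}f_X=0$ force $x_{a,j}=1$ for all $a,j$, the equations $y_i\,\partial_{y_i}f_X=0$ force $y_1=\dots=y_{i_X-1}=v$ with $v^{i_X}=\prod_a d_a^{d_a}$, and the critical value is $i_X v$; this gives $i_X$ nondegenerate critical points with pairwise distinct critical values, hence $i_X$ ordinary double points lying in $i_X$ distinct fibers over the torus.

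The gap is step 3, which you correctly identify as ``where the real work lies'' but then do not carry out: you defer it to \cite{Prz13} and \cite{Prz18}, which treat only threefolds, whereas the boundary analysis for complete intersections in arbitrary dimension is precisely the content of \cite{Prz25}. Moreover, the organizing claim of your step 3 --- that ``no additional isolated fiber singularities are created at the boundary'' and that the central fiber ``collects all the non-isolated singularities'' --- is false as stated. For a quadric hypersurface the resolution of the base locus produces an extra ordinary double point over $\lambda=0$, beyond the $i_X$ interior critical points (see the point $P_3$ in Lemma~\ref{lemma:quadric}, and Proposition~\ref{proposition:quadrics exceptional}, where $\varepsilon(X)=i_X+1$ or $i_X+2$); the theorem's phrase ``possibly (and usually) non-isolated'' is worded exactly to absorb such isolated boundary contributions into the single exceptional fiber. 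What actually has to be proved is: away from one distinguished fiber, every singularity of a compactified pencil member along the base locus is du Val of type independent of $\lambda$, so that the crepant resolution of the base locus resolves these singularities simultaneously in $\lambda$ and introduces no fiber singularities outside the exceptional fiber --- compare the case-by-case tracking of the types $\dvA_k$, $\dvD_4$ in the threefold lemmas. Establishing this uniformly for all complete intersections (together with the existence of the log Calabi--Yau compactification in higher dimension, which you assume) is the substance of the theorem; your proposal supplies a plan for this step rather than a proof.
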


Note that by Definition~\ref{definition:omegi} one has $\omega_\cO(X)\ge i_X$.

Recall that Landau--Ginzburg models correspond to Fano varieties $X$ together with complexified symplectic forms (or, the same,
elements of $\Pic(X)\otimes \CC$) on $X$.
By~\cite{DHKOP24}, for smooth Fano threefold $X$ the deformation space of Landau--Ginzburg models is unobstructed and has dimension $\rk \Pic (X)-1$.
We call a general member of the space \emph{general Landau--Ginzburg model}, and the one that correspond to an anticanonical divisor
by \emph{anticanonical Landau--Ginzburg model}.
Since for Picard rank one case this space is just a point, in Theorem~\ref{theorem:ODP for LG} we considered a single
Landau--Ginzburg model for each family of Fano varieties.
The following example shows the importance of considering not anticanonical Landau--Ginzburg models,
but general ones (cf.~\cite{LP25}).

\begin{example}[{\cite[Example 3.1.5]{LP25}}]
\label{example:number of ODP decreases}
Consider a Fano threefold which is a blow-up of $\PP^3$ in genus $3$ and degree $6$ curve (which is an intersection
of $3$ cubics). One can see that, for a general Landau–Ginzburg model, there are five singular fibers: one simple normal crossings reducible fiber and four fibers each containing a single ordinary double point. However, as a general Landau--Ginzburg model varies toward the anticanonical Landau--Ginzburg model, one of the ordinary double points collides with the reducible fiber. Note that by Bondal--Orlov blow-up theorem one has $\omega(X)\ge 4$. We expect that $\omega(X)=\omega_\cO(X)=4$.
\end{example}

\begin{proposition}[{\cite[Proposition 6.13]{KP21} and~\cite[proof of Proposition 4.2]{KS25}}]
\label{proposition:quadric and cubic}
Let $X=X_3$, that is $X$ is an intersection of a quadric and a cubic.
If the quadric is smooth, then $\omega_\cO(X)\ge 2$; otherwise $\omega_\cO(X)\ge 1$.
\end{proposition}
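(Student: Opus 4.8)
The plan is as follows. The second assertion requires no hypothesis on the quadric: $X$ is a smooth connected Fano threefold, so $\Ext^\bullet_X(\cO_X,\cO_X)=H^\bullet(X,\cO_X)=\CC$ concentrated in degree $0$, and since $i_X=1$ the one-term collection $\langle\cO_X\rangle$ \emph{is} the standard collection; hence $\omega_\cO(X)\ge 1$ unconditionally. The content is the case of a smooth quadric, where the aim is to produce an exceptional collection $\langle\mathcal{E},\cO_X\rangle$ of length $2$ extending $\langle\cO_X\rangle$, with $\mathcal{E}$ the restriction to $X$ of a spinor bundle on the ambient quadric fourfold.

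Write $X=Q\cap C$ with $Q,C\subset\PP^5$ a quadric fourfold and a cubic fourfold meeting transversally (possible since $X$ is smooth). If $Q$ is smooth, it is the smooth four-dimensional quadric, carrying two rank-two spinor bundles $S_1,S_2$; by~\cite{Ka88} one has $\Db(Q)=\langle S_1,S_2,\cO,\cO(1),\cO(2),\cO(3)\rangle$, so each $S_i$ is exceptional, $H^\bullet(Q,S_i)=0$, and, with the normalisation $\det S_i=\cO(-1)$, one has $S_i^\vee\cong S_i(1)$. Set $\mathcal{E}:=S_1|_X$. It suffices to show that $\Ext^\bullet_X(\cO_X,\mathcal{E})=0$ and that $\mathcal{E}$ is exceptional, for then $\langle\mathcal{E},\cO_X\rangle$ is an exceptional collection containing $\langle\cO_X\rangle$ and so $\omega_\cO(X)\ge 2$.

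Both points descend to $Q$ through the Koszul resolution $0\to F(-3)\to F\to F|_X\to 0$ (valid for locally free $F$ since $\cO_Q(X)=\cO_Q(3)$), Serre duality on $Q$ (canonical bundle $\cO_Q(-4)$), and Bott's theorem under the identification $Q\cong\mathrm{Gr}(2,4)$, in which $S_1$ may be taken to be the tautological subbundle $\mathcal{U}$. For the first point, take $F=S_1$: then $H^\bullet(Q,S_1)=0$, while Serre duality together with $S_1^\vee=S_1(1)$ gives $H^\bullet(Q,S_1(-3))\cong H^\bullet(Q,S_1)^\vee=0$, so the long exact sequence yields $H^\bullet(X,\mathcal{E})=\Ext^\bullet_X(\cO_X,\mathcal{E})=0$. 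For the second, take $F=S_1^\vee\otimes S_1$: this has cohomology $\CC$ in degree $0$ on $Q$ (exceptionality of $S_1$), so one needs $H^\bullet(Q,(S_1^\vee\otimes S_1)(-3))=0$. Using $\mathcal{U}^\vee\otimes\mathcal{U}=\cO\oplus(S^2\mathcal{U})(1)$ this splits into $H^\bullet(Q,\cO(-3))=0$ (an intermediate twist on the quadric fourfold) and $H^\bullet(Q,(S^2\mathcal{U})(-2))$; the latter is Serre-dual to $H^\bullet(\mathrm{Gr}(2,4),S^2\mathcal{U})$, which vanishes by Bott because the weight of $S^2\mathcal{U}$ is made singular by the shift by $\rho$. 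Granting this, the Koszul sequence gives $\Ext^\bullet_X(\mathcal{E},\mathcal{E})=\CC$ in degree $0$, completing the argument.

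The only step that is not purely formal is the Bott computation on $\mathrm{Gr}(2,4)$, together with the bookkeeping needed to carry the twists correctly through the isomorphism $Q\cong\mathrm{Gr}(2,4)$ and the normalisation $S_i^\vee=S_i(1)$. Two remarks delimit the statement. First, this construction stops at length $2$: the bundles $S_1|_X$ and $S_2|_X$ cannot both lie in an exceptional collection, since $\Ext^\bullet_X(S_i|_X,S_j|_X)\ne 0$ for both $(i,j)=(1,2)$ and $(2,1)$ --- the same Koszul device reduces these to the cohomology of a negative twist of $T_Q$, and $H^\bullet(Q,T_Q(-4))\cong H^\bullet(Q,\Omega^1_Q)^\vee\ne 0$ detects $h^{1,1}(Q)=1$ --- which is consistent with the expectation, recorded above, that $\omega(V_6)=2$ for general $V_6$. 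Second, no pair of line bundles can replace $\mathcal{E}$: on an index-one Fano threefold $H^\bullet(X,\cO_X(k))\ne 0$ for every $k\in\ZZ$, so $\langle\cO_X(a),\cO_X(b)\rangle$ is never exceptional for $a\ne b$; and when the quadric degenerates there is no spinor bundle on $Q$ through which to run the argument, which is why only the trivial bound is available in that case.
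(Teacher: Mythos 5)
The paper offers no proof of this proposition, only citations; your argument reconstructs essentially the same spinor-bundle construction that constitutes the cited proof of \cite[Proposition 4.2]{KS25} (restrict a spinor bundle from the unique quadric through $X$, which works exactly when that quadric is smooth), and the one non-formal step you flag does check out: the weight of $S^2\mathcal{U}$ on $\mathrm{Gr}(2,4)$ is $(0,-2,0,0)$, which after adding $\rho=(4,3,2,1)$ has a repeated entry, so $H^\bullet(\mathrm{Gr}(2,4),S^2\mathcal{U})=0$ and the exceptionality of $S_1|_X$ follows as you describe. Your proof is correct and matches the approach of the reference.
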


Corollary~\ref{corollary:anticanonical LG for del Pezzo}, Corollary~\ref{corrollary: coincidence}, Example~\ref{example:number of ODP decreases}, and Proposition~\ref{proposition:quadric and cubic} justify the following.

\begin{conjecture}
\label{conjecture: collections for threefolds}
Let $X$ be a smooth Fano threefold. 
The maximal length of exceptional collection on $X$ equals the number of fibers
with ordinary double points for \emph{generic} Landau--Ginzburg model 
 except for the case $X=X_3$, that is for $V_6$, when the
equality holds for special $X$ in its deformation family. Moreover, $\omega(X)= \omega_\cO(X)$.
\end{conjecture}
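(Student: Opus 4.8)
The conjecture bundles three assertions: the lower bound $\omega(X)\ge\varepsilon_{gen}(X)$, the upper bound $\omega(X)\le\varepsilon_{gen}(X)$ (outside the $V_6$ family), and the equality $\omega_\cO(X)=\omega(X)$, where $\varepsilon_{gen}(X)$ is the number of fibers with ordinary double points of a general Landau--Ginzburg model of $X$, equal to the $\varepsilon(X)$ of Theorem~\ref{theorem:ODP for LG} when $\rk\Pic(X)=1$. The plan is to treat them separately and to reduce higher Picard rank to Picard rank one wherever possible. For the lower bound one produces, in each deformation family, an exceptional collection of length $\varepsilon_{gen}(X)$: when $\rk\Pic(X)\ge 2$ almost every Fano threefold is a blow-up of one of smaller Picard rank, a product, or carries a conic bundle or del Pezzo fibration, so Orlov's blow-up and projective bundle theorems, the K\"unneth formula, and Kuznetsov's relative descriptions glue exceptional collections on the pieces into one on $X$ (on a blown-up point or rational curve one needs only $\cO$ and its twists, and a centre of positive genus contributes no exceptional objects at all). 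In parallel one must check that $\varepsilon_{gen}$ is additive under the corresponding operations on Landau--Ginzburg models --- a fibered-sum statement refining Example~\ref{example:number of ODP decreases} and~\cite{LP25} --- so that the collection built has exactly the predicted length. For $\rk\Pic(X)=1$ the lower bound is almost entirely assembled already in Corollary~\ref{corrollary: coincidence} from Table~\ref{table:estimates}, Lemma~\ref{lemma:omega for index 2} and Proposition~\ref{proposition:quadric and cubic}; only $V_2,V_4,V_8$ and $V_{10},\ldots,V_{18}$ still need the relevant exceptional objects written down explicitly.

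The hard assertion is the upper bound $\omega(X)\le\varepsilon_{gen}(X)$. A first, coarse constraint is the additivity of Hochschild homology along semiorthogonal decompositions: an exceptional collection of length $m$ splits off $m$ summands $\CC$ in degree zero, so $m\le\dim HH_0(X)=\sum_p h^{p,p}(X)=2+2\,\rk\Pic(X)$, which for $\rk\Pic(X)=1$ recovers $\omega(X)\le 4$ (Corollary~\ref{corrollary: coincidence}) but is far from sharp, e.g.\ for $V_2$ with $\varepsilon_{gen}(V_2)=1$. The plan is to sharpen it by analysing the residual (Kuznetsov) component $\cC_X$ in a decomposition $\Db(X)=\langle E_1,\ldots,E_m,\cC_X\rangle$ and showing $\cC_X$ cannot be made too small: it must absorb the nonzero odd Hochschild homology $HH_{\pm1}(X)\cong\CC^{h^{1,2}(X)}$, and, more precisely, the known descriptions of $\cC_X$ for the prime Fano threefolds and the del Pezzo threefolds --- rectangular Lefschetz decompositions, Kuznetsov components equivalent to derived categories of curves of positive genus or to Calabi--Yau type categories --- carry no exceptional objects at all, so that only the length $\varepsilon_{gen}(X)$ is available for the rest. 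I expect this to be the main obstacle: the required non-existence statements are open even for several Picard rank one families, and the phenomenon is delicate --- it is precisely the failure of the upper bound to be deformation invariant that forces the $V_6$ exception, where the rank of the defining quadric makes $\omega(X)$ drop below $\varepsilon_{gen}(X)$ (Proposition~\ref{proposition:quadric and cubic}), so a proof must see the individual variety and not only its family.

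The last assertion, $\omega_\cO(X)=\omega(X)$, says that a longest exceptional collection can be chosen to extend the standard collection $\cO_X,\ldots,\cO_X(i_X-1)$; since $\omega_\cO(X)\le\omega(X)$ always, it is immediate once the length $\varepsilon_{gen}(X)$ collection built for the lower bound is itself such an extension, which one can arrange whenever it comes from Orlov's formulas (with the pulled-back $\cO_X$ leftmost) or from the known collections on the Picard rank one families. The remaining cases call for a mutation argument in the spirit of the del Pezzo surface case (Theorem~\ref{theorem:del Pezzo}, \cite{KO95}): starting from a collection of maximal length, mutate it until $\cO_X$ occurs and then mutate inside the orthogonals to recover the whole twisted block, controlling lengths at each step; one cannot simply invoke ``every exceptional collection extends to a full one'' since a Fano threefold need not have a full exceptional collection, but the small target values $\varepsilon_{gen}(X)\le 4$ keep the bookkeeping finite.

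Finally, a parallel route is to prove Homological Mirror Symmetry $\Db(X)\simeq\mathrm{FS}(Y,w)$ for Fano threefolds and read $\omega(X)$ off the symplectic side: the $\varepsilon(X)$ vanishing thimbles at the ordinary double point fibers form an exceptional collection of the expected length, and the category $\mathcal{R}_Y$ attached to the central fiber accounts for the rest, provided one also knows that $\mathcal{R}_Y$ carries no exceptional objects. Both ingredients --- HMS itself and the structure of $\mathcal{R}_Y$ --- are at present at least as hard as the algebraic upper bound, but the two viewpoints should be developed together, the Fukaya--Seidel side predicting the answer and pinning down exactly which Kuznetsov components the algebraic side must prove to be free of exceptional objects.
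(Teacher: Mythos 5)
This statement is a \emph{conjecture}: the paper offers no proof of it. What the paper does is assemble supporting evidence --- Corollary~\ref{corollary:anticanonical LG for del Pezzo} (the surface case, where the equality is a theorem), Corollary~\ref{corrollary: coincidence} (the inequalities $4\ge\omega(X)\ge\omega_\cO(X)\ge\varepsilon(X)$ for Picard rank one and the exact matches for $V_{22}$, $Y_5$, $Q$, $\PP^3$ and $Y_1,\ldots,Y_4$), Example~\ref{example:number of ODP decreases} (a higher Picard rank sanity check), and Proposition~\ref{proposition:quadric and cubic} (the $V_6$ anomaly that forces the exception in the statement) --- and then declares that this evidence ``justifies'' the conjecture. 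So there is no proof in the paper to compare yours against; the honest assessment is whether your plan is a sound reading of the evidence and a plausible route, and whether you have misrepresented any step as easier than it is.

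On that score your proposal is essentially faithful to the paper's logic and, importantly, you correctly flag the genuinely open steps rather than papering over them. Two remarks. First, the lower bound in Picard rank one is already fully established by the paper (Table~\ref{table:estimates} gives $\omega_\cO\ge 1$ for $V_2,V_4,V_8$ via the standard collection and Proposition~\ref{proposition:index one two objects} gives $\ge 2$ for $V_{10},\ldots,V_{18}$), so nothing further ``needs to be written down explicitly'' there; the real lower-bound work is in higher Picard rank, where your proposed additivity of $\varepsilon_{gen}$ under blow-ups is itself only conjectural (cf.~\cite{LP25}). Second, and decisively, the upper bound $\omega(X)\le\varepsilon_{gen}(X)$ and the equality $\omega(X)=\omega_\cO(X)$ require showing that the residual categories of, e.g., $V_2$, $V_4$, $V_8$, $V_{10},\ldots,V_{18}$ contain no exceptional objects and that maximal collections can be mutated onto the standard block; both are open, there is no analogue of Theorem~\ref{theorem:del Pezzo} in dimension three, and Example~\ref{example: different exceptional length} is the paper's own warning that the mutation strategy can fail for threefolds. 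Your proposal is therefore a reasonable research program consistent with the paper, but it is not, and does not pretend to be, a proof; neither does the paper supply one.
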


Note that in higher dimensions or for the non-Fano case the situation may be more complicated.
That is, exceptional objects in the Fukaya--Seidel category may appear not only from ordinary
double points in fibers. Proposition~\ref{proposition:quadric and cubic} shows that this holds already for the Fano threefold case $X=X_3$.
Moreover, even if a variety has a full exceptional collection in the derived category,
its Landau--Ginzburg model may have not only ordinary double points as singularities of fibers.
Indeed, if fibers of a Landau--Ginzburg model have only ordinary double points,
then we expect that exceptional objects in the Fukaya--Seidel category of the Landau--Ginzburg model
correspond to the singular points, and any exceptional collection can be extended to a full one.
Thus, by Homological Mirror Symmetry conjecture, the same should hold for the derived category of the variety.
However the following example of threefold weak Fano variety shows that this may be not true.

\begin{example}[\cite{Kuz13}]
\label{example: different exceptional length}
Let $X$ be $\PP^3$ blown up in a line and the proper preimage of a conic transversally intersecting the line.
By Orlov's blow-up theorem $X$ has a full exceptional collection, and $\omega_\cO(X)=\omega(X)=8$.
However there exist an exceptional collection of four objects $\cL_i$ in $\Db(X)$ such that the category $\cC$ defined by
$$
\Db(X)=\langle\cC, \cL_1,\cL_2,\cL_3,\cL_4,\cL_5,\cL_6\rangle
$$
does not have an exceptional object.
\end{example}

\section*{Acknowledgements}
The author is very grateful to Alexander Kuznetsov for numerous useful discussions.
This work was supported by the Russian Science Foundation under grant no. 25-11-00057, \url{https://rscf.ru/project/25-11-00057/}.

\section{Exceptional collections}
\label{section:EC}

Let $X$ be a smooth complex Fano variety. Recall that $i_X$ denotes its Fano index.

\begin{proposition}[{see, for instance,~\cite[Lemma 3.4]{Kuz09}}]
\label{proposition:standard collection}
There is a semiorthogonal decomposition
$$
\Db(X)=\langle \cC,\cO_X,\ldots,\cO_X(i_X-1)\rangle.
$$
\end{proposition}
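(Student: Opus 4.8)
The plan is to show that $\cO_X,\cO_X(1),\ldots,\cO_X(i_X-1)$ is an exceptional collection in $\Db(X)$ and then to take $\cC$ to be its right orthogonal. By the definition of the Fano index we may write $-K_X=i_XH$ for a primitive ample class $H$ on $X$, with $\cO_X(k):=\cO_X(kH)$; then for integers $a\ge b$ one has $\Ext^\bullet(\cO_X(a),\cO_X(b))=H^\bullet\!\big(X,\cO_X((b-a)H)\big)$ with $0\le a-b\le i_X-1$, so the entire verification reduces to computing these cohomology groups.

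The key input is Kodaira vanishing together with the Fano condition. For $a=b$ we use that $X$ is Fano: $H^i(X,\cO_X)=H^i\big(X,\cO_X(K_X+(-K_X))\big)=0$ for all $i>0$ by Kodaira vanishing applied to the ample divisor $-K_X$, while $H^0(X,\cO_X)=\CC$ since $X$ is projective and connected; hence $\Ext^\bullet(\cO_X(a),\cO_X(a))=\CC$, concentrated in degree $0$. For $1\le a-b\le i_X-1$ we rewrite $\cO_X((b-a)H)=\cO_X\big(K_X+(i_X-(a-b))H\big)$; since $1\le i_X-(a-b)\le i_X-1$, the class $(i_X-(a-b))H$ is ample, so Kodaira vanishing gives $H^i(X,\cO_X((b-a)H))=0$ for $i>0$, and $H^0(X,\cO_X((b-a)H))=0$ because the dual bundle $\cO_X((a-b)H)$ is ample on the positive-dimensional projective variety $X$. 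Thus $\Ext^\bullet(\cO_X(a),\cO_X(b))=0$ for $a>b$, and the collection is exceptional.

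It remains to pass from an exceptional collection to a semiorthogonal decomposition. Since $X$ is smooth and projective, $\Db(X)$ is saturated (in particular $\Ext$-finite, idempotent complete, and equipped with a Serre functor), so the triangulated subcategory $\cA:=\langle\cO_X,\ldots,\cO_X(i_X-1)\rangle$ generated by the exceptional collection is admissible. Setting $\cC:=\cA^\perp=\{F\in\Db(X)\,:\,\Ext^\bullet(\cO_X(k),F)=0\ \text{for}\ 0\le k\le i_X-1\}$, admissibility of $\cA$ supplies for every object $F$ a functorial distinguished triangle $C_F\to F\to A_F\to C_F[1]$ with $A_F\in\cA$ and $C_F\in\cC$; this is exactly the asserted semiorthogonal decomposition $\Db(X)=\langle\cC,\cO_X,\ldots,\cO_X(i_X-1)\rangle$.

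I expect the only non-formal ingredient to be the vanishing $H^\bullet(X,\cO_X(-mH))=0$ for $1\le m\le i_X-1$ (that is, Kodaira vanishing plus the absence of sections of anti-ample bundles), which is precisely what makes the collection exceptional; the passage to the semiorthogonal complement $\cC$ is then a formal consequence of $\Db(X)$ being saturated, so I anticipate no genuine obstacle there. Alternatively, one may simply invoke \cite[Lemma 3.4]{Kuz09}, but the argument above is its content.
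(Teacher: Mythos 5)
The paper gives no proof of this proposition, simply citing \cite[Lemma 3.4]{Kuz09}; your argument is precisely the standard content of that lemma (Kodaira vanishing makes $\cO_X,\ldots,\cO_X(i_X-1)$ exceptional, and admissibility of the subcategory it generates yields the decomposition with $\cC$ the right orthogonal), and it is correct. One small slip: for the decomposition $\Db(X)=\langle \cC,\mathcal{A}\rangle$ with $\cC=\mathcal{A}^{\perp}$ the projection triangle runs $A_F\to F\to C_F\to A_F[1]$ rather than $C_F\to F\to A_F$; as you wrote it, semiorthogonality would kill the connecting map $\Ext^1(A_F,C_F)=0$ and force every object to split as $C_F\oplus A_F$, which is absurd.
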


Given a Fano variety $X$, let us call the exceptional collection $\cO_X,\ldots,\cO_X(i_X-1)$ \emph{standard}.
In some cases one can extend the standard exceptional collection to a longer one; this happens exactly when the category
$\cC$ in Proposition~\ref{proposition:standard collection} has an exceptional object. In the paper we consider the
following two invariants of a variety.

\begin{definition}
\label{definition:omegi}
Let $X$ be a Fano variety. We define $\omega(X)$ as the maximal length of an exceptional collection in $\Db(X)$,
and $\omega_\cO(X)$ as the maximal length of 
an exceptional collection of type
$$
\cL_1,\ldots,\cL_k,\cO_X,\ldots,\cO_X(i_X-1).
$$
\end{definition}

\begin{remark}
\label{remark: omega ge omegaO}
Obviously  for any smooth Fano variety $X$ of dimension $n$ one has $\omega(X)\ge \omega_\cO(X)$. Moreover, by additivity of Hochschild homology under semiorthogonal decompositions (see~\cite[Corollary 7.5]{Kuz09b}) one has $\omega(X)\le \dim HH_0(X)$,
and by the Hochschild--Kostant--Rosenberg theorem one has $\dim HH_0(X)=\sum_{p=0}^{n} h^{p,p}(X)$. In particular, 
by Proposition~\ref{proposition:standard collection} one has
$$
i_X\le \omega_\cO(X) \le \omega(X)\le \sum_{p=0}^{n} h^{p,p}(X).
$$
\end{remark}

Theorem~\ref{theorem:del Pezzo} shows that 
for a del Pezzo surface $X$ one has
$$
\omega_\cO(X)=\omega(X)=\sum_{p=0}^{n} h^{p,p}(X).
$$
Higher-dimensional case is more challenging. Let $X$ be a smooth Fano variety of dimension $n$ such that $\Db(X)$ has a full exceptional collection. Then by additivity of Hochschild homology under semiorthogonal decompositions
one has
$\omega(X)=\sum_{p=0}^{n} h^{p,p}(X)$.
If $n=3$ and $\Pic(X)=\ZZ$, that is, if $X$ lies in the families $10$, $15$, $16$, $17$, i.\,e. $X$ is $V_{22}$, $Y_5$, $Q$, or $\PP^3$,
then $\sum_{p=0}^{3} h^{p,p}(X)=4$, and
from the certain full exceptional collections constructed in~\cite{Kuz96},~\cite{Or91},~\cite{Ka88},~\cite{Be78}
one can see that there exist a full exceptional collection that is an extension of the standard one,
so that
$$
\omega_\cO(X)=\omega(X)=\sum_{p=0}^{n} h^{p,p}(X)=4.
$$

Let us present some known estimates for $\omega(X)$ and $\omega_\cO(X)$ for some other Fano threefolds $X$ of Picard rank one.

\begin{lemma}
\label{lemma:omega for index 2}
Let $X$ be a smooth threefold from families $11$, $12$, $13$, $14$, that is $X$ is $Y_1$, $Y_2$, $Y_3$, or $Y_4$.
Then $\omega_\cO(X)=2$.
\end{lemma}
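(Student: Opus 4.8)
The plan is to establish the two inequalities $\omega_\cO(X)\ge 2$ and $\omega_\cO(X)\le 2$ separately. For the lower bound, recall that each of $Y_1,\dots,Y_4$ has Fano index $i_X=2$, so by Proposition~\ref{proposition:standard collection} the standard collection $\cO_X,\cO_X(1)$ already has length $2$, hence $\omega_\cO(X)\ge i_X=2$ (as noted in Remark~\ref{remark: omega ge omegaO} and after Theorem~\ref{theorem:complete intersections}). Thus the content of the lemma is the upper bound: the category $\cC$ appearing in the semiorthogonal decomposition $\Db(X)=\langle\cC,\cO_X,\cO_X(1)\rangle$ contains no exceptional object. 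Equivalently, there is no line bundle--like piece $\cL$ with $\cL,\cO_X,\cO_X(1)$ exceptional; more precisely, one must rule out an extension of the standard collection by \emph{any} object on the left.

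First I would identify $\cC$ with a known category. For a prime Fano threefold $Y$ of index $2$ and degree $8m$ (i.e. $Y=Y_m$ with $-K_Y=2H$, $H^3=m$), the left orthogonal $\cC=\langle\cO_Y,\cO_Y(1)\rangle^\perp$ is the nontrivial component of the derived category, and one computes its Hochschild homology via additivity: $\dim HH_0(\cC)=\sum h^{p,p}(Y)-2$. Since for $Y_1,\dots,Y_4$ one has $\sum_{p=0}^3 h^{p,p}(Y)=2$ (the only nonzero Hodge numbers on the diagonal are $h^{0,0}=h^{3,3}=1$, while $h^{1,1}=h^{2,2}=1$ as well — so actually $\sum h^{p,p}=4$), one must be careful: the relevant invariant is rather $\dim HH_0(\cC)$ together with the structure of $\cC$ as recorded in the literature on these threefolds (Kuznetsov, and references in Table~\ref{table:estimates}). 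The cleanest route is to invoke the description of $\cC$ for each family: for $Y_m$ with small $m$, $\cC$ is equivalent either to the derived category of a curve of genus $>0$, or to a category with $HH_0$ of dimension $\le 2$ but with no exceptional object because any exceptional object would force $\dim HH_0(\cC)\ge 1$ while the Euler form / numerical Grothendieck group obstruction prevents a length-one orthogonal piece from splitting off. Concretely, for $Y_2$ (the quintic del Pezzo threefold $V_5$ — wait, index conventions: $Y_m$ has degree $8m$, so $Y_1$ has degree $8$, etc.), $\cC$ is generated by the restriction of a rank-$2$ bundle with no further exceptional object, a fact established in the cited works.

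So the concrete steps are: (1) fix $m\in\{1,2,3,4\}$ and recall $i_{Y_m}=2$, giving $\omega_\cO\ge 2$; (2) recall from the references in Table~\ref{table:estimates} (Kuznetsov's papers on Fano threefolds and their derived categories) the explicit description of the residual category $\cC=\langle\cO_{Y_m},\cO_{Y_m}(1)\rangle^\perp$; (3) check that this $\cC$ has no exceptional object — either because it is equivalent to $\Db$ of a positive-genus curve (which has none, as $\mathrm{Ext}^1(\cF,\cF)\ne 0$ for any object since the curve has positive genus), or because its numerical Grothendieck group has rank $\le 1$ but the Euler pairing is degenerate or even, precluding an exceptional class; (4) conclude $\omega_\cO(Y_m)=2$. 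The main obstacle will be step~(3): verifying the \emph{nonexistence} of an exceptional object in $\cC$ requires either a precise geometric identification of $\cC$ (which depends delicately on $m$ and on results scattered across the literature) or a uniform numerical argument via the shape of the Euler form on $K_0(\cC)$; assembling the correct references and checking the genus/degeneracy condition in each of the four cases is where the real work lies. I expect the proof to proceed case-by-case, leaning on the derived-category descriptions of $Y_1,\dots,Y_4$ already in the literature.
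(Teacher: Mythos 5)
Your lower bound $\omega_\cO(X)\ge 2$ is fine and is exactly what the paper uses (it is immediate from $i_X=2$ and Proposition~\ref{proposition:standard collection}). The problem is the upper bound: you correctly isolate it as ``show that $\cC=\langle\cO_X,\cO_X(1)\rangle^\perp$ has no exceptional object,'' you list two possible strategies for it, and then you explicitly defer the work (``this is where the real work lies''). That deferred step is the entire content of the lemma, so as written the proposal has a genuine gap rather than a proof.

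Moreover, the concrete hints you give for closing the gap are either wrong or insufficient. The geometric identification of $\cC$ with the derived category of a positive-genus curve works only for $Y_4$ (the intersection of two quadrics, where $\cC\simeq\Db(C)$ for a genus $2$ curve and Okawa's theorem applies); for $Y_1,Y_2,Y_3$ the residual category is not the derived category of a curve, so this route does not extend. Your numerical suggestion is closer to the truth but misstated: $K_0(\cC)_{\mathrm{num}}$ has rank $2$, not $\le 1$, and the relevant obstruction is not that the Euler pairing is ``degenerate or even.'' The paper's actual argument (following Kuznetsov) is: identify $K_0(X)_{\mathrm{num}}$ with the rank-$4$ lattice generated by $[\cO_X],[\cO_H],[\cO_L],[\cO_P]$, take the orthogonal complement of $\mathrm{ch}([\cO_X]),\mathrm{ch}([\cO_X(1)])$ to get $K_0(\cC)_{\mathrm{num}}\simeq\ZZ^2$ with Euler form
$$
\left(\begin{array}{cc} -1 & -1 \\ 1-d & -d \end{array}\right),
$$
and observe that this form is non-positively definite for $d=1,2,3,4$. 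Since any exceptional object $\mathcal{F}$ satisfies $\chi(\mathcal{F},\mathcal{F})=1>0$, no exceptional object can exist in $\cC$. This is the uniform computation your step (3) gestures at but does not perform; without it (or an equivalent case-by-case identification of all four residual categories) the proof is incomplete.
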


\begin{proof}
The proof is based on Kuznetsov's calculations in~\cite{Kuz09}.
By Proposition~\ref{proposition:standard collection}, one has $\Db(X)=\langle\cC,\cO_X,\cO_X(1)\rangle$
for some category $\cC$.
Let $K_0(X)$ be the Grothendieck group of coherent sheaves on $X$.
It is equipped with the Euler bilinear form
$$
\chi\colon K_0(X)\otimes K_0(X)\to \ZZ,\ \ \ \chi([F],[G])=\sum_i(-1)^i\dim \mathrm{Ext}^i(F,G).
$$
One can define a numerical Grothendieck group $K_0(X)_{\mathrm{num}}$ as $K_0(X)/\mathrm{Ker}\chi$.
The Chern character
$$\mathrm{ch}\colon K_0(X)\to
\HH=\bigoplus H^{2p}(X,\QQ)$$
induces an isomorphism between $K_0(X)_{\mathrm{num}}\otimes \QQ$ and
$\HH$.
Let $H$ be the generator of $\Pic(X)$ such that $-K_X=2H$, let $L$ be the class of a line on $X$, and let
$P$ be the class of a point on $X$.
Put $d=H^3$, so that $d=1,\ldots, 4$.
By~\cite[Corollary 5.8]{Kuz09} one has
$$
K_0(X)_{\mathrm{num}}=\langle[\cO_X],[\cO_H],[\cO_L],[\cO_P]\rangle.
$$
The Chern character identifies $K_0(X)_{\mathrm{num}}$ (equipped with the Euler form) with the lattice
$\LL\subset \HH
$ generated by
$$
\mathrm{ch}(\cO_X)=1,\ \ \mathrm{ch}(\cO_H)=H-\frac{d}{2}L+\frac{d}{6}P,\ \ \mathrm{ch}(\cO_L)=L,\ \ \mathrm{ch}(\cO_P)=P.
$$
This implies that
$$
K_0(\cC)_{\mathrm{num}}=\langle \mathrm{ch}([\cO_X]),\mathrm{ch}([\cO_X(1)])\rangle^\bot\subset \LL.
$$
By the proof of~\cite[Proposition 3.9]{Kuz09}, the Euler form on $K_0(\cC)_{\mathrm{num}}\simeq \ZZ^2$ is given by the matrix
$$
\left(
  \begin{array}{cc}
    -1 & -1 \\
    1-d & -d \\
  \end{array}
\right).
$$
It is easy to see that this form is non-positively definite. 
Since for an exceptional object $\mathcal{F}$
one has $\chi(\mathcal{F},\mathcal{F})=1$, there are no exceptional objects in $\cC$, which
gives the assertion of the lemma.
\end{proof}

\begin{remark}
For $d=4$, that is if $X$ is 
intersection of two quadrics, one can derive the assertion of Lemma~\ref{lemma:omega for index 2}
in another way. That is, by 
\cite[Theorem 2.9]{BO95},
one has
$$
\Db(X)\simeq\langle \Db(C),\cO_X,\cO_X(1)\rangle,
$$
where $C$ is a genus $2$ curve. However $\Db(C)$ does not have an exceptional object by \cite{Ok11}.
\end{remark}

\begin{proposition}[{\cite[Lemma 3.6]{Kuz09}, \cite[6.2]{Kuz06}, \cite[6.3]{Kuz06}}]
\label{proposition:index one two objects}
Let $X=X_n$ be a Fano threefold for $n=5,6,7,8,9$, that is $V_{10}$, $V_{12}$, $V_{14}$, $V_{16}$, or $V_{18}$. Then $\omega_\cO(X)\ge 2$.
\end{proposition}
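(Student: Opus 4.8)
The plan is to produce, for each of the five deformation families, a single exceptional vector bundle $\mathcal{E}$ on $X$ with $H^\bullet(X,\mathcal{E})=0$, and then to observe that $\mathcal{E},\cO_X$ is an exceptional collection of exactly the shape required in Definition~\ref{definition:omegi} (here $i_X=1$, so the standard collection is just $\cO_X$): the sheaf $\cO_X$ is exceptional on any Fano variety by Kodaira vanishing, and the semiorthogonality condition $\Ext^\bullet(\cO_X,\mathcal{E})=H^\bullet(X,\mathcal{E})=0$ is precisely the vanishing we demand. Hence $\omega_\cO(X)\ge 2$. Thus the whole proof reduces to exhibiting $\mathcal{E}$ and checking two things: that it is exceptional, $\Ext^\bullet_X(\mathcal{E},\mathcal{E})=\CC$, and that it is acyclic, $H^\bullet(X,\mathcal{E})=0$.

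For $\mathcal{E}$ one takes the \emph{Mukai bundle}. Recall the Mukai models of prime Fano threefolds of these genera: for $g=7$ ($V_{12}$), $X$ is a transverse linear section of the spinor tenfold $\mathrm{OG}(5,10)\subset\PP^{15}$; for $g=8$ ($V_{14}$), of the Grassmannian $\mathrm{Gr}(2,6)\subset\PP^{14}$; for $g=9$ ($V_{16}$), of the Lagrangian Grassmannian $\mathrm{LG}(3,6)\subset\PP^{13}$; for $g=10$ ($V_{18}$), of the adjoint $G_2$-variety $\Sigma\subset\PP^{13}$; and for $g=6$ ($V_{10}$), $X$ is a Gushel--Mukai threefold, cut in $\mathrm{Gr}(2,5)$ --- or, in the special Gushel case, in the projective cone over it --- by a linear subspace and a quadric. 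In each case let $\mathcal{E}$ be the restriction to $X$ of the tautological (for the Grassmannians), spinor (for $\mathrm{OG}(5,10)$), or minimal homogeneous (for $\Sigma$) bundle, suitably normalised so that it is acyclic on the ambient homogeneous variety; for instance $\mathcal{E}=\mathcal{U}|_X$, with $\mathcal{U}$ the rank-$2$ tautological subbundle, in the case $V_{14}$.

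Both verifications then proceed through the Koszul resolution of $\cO_X$ on the ambient homogeneous variety (with the Koszul complex of the quadric inserted as well in the genus-$6$ case). For acyclicity, $H^\bullet(X,\mathcal{E})=0$ follows once all the cohomology groups of $\mathcal{E}$ twisted by the exterior powers of the conormal bundle vanish on the ambient variety, which is a finite list of Bott-diagram computations. For exceptionality one resolves $\cO_X$ in the same way to compute $\Ext^\bullet_X(\mathcal{E},\mathcal{E})$ from the $\Ext$ of $\mathcal{E}$ against its twists on the ambient variety, and Bott's theorem kills every contribution except the one yielding $\mathrm{Hom}(\mathcal{E},\mathcal{E})=\CC$. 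All of this is carried out in~\cite[Lemma 3.6]{Kuz09} and~\cite[6.2, 6.3]{Kuz06}, where in fact a full semiorthogonal decomposition $\Db(X)=\langle\mathcal{A}_X,\mathcal{E},\cO_X\rangle$ is obtained; for the present statement one retains only the exceptional pair $\langle\mathcal{E},\cO_X\rangle$ at its right end.

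The main obstacle is the genus-$6$ family $V_{10}$. Its Mukai ambient $\mathrm{Gr}(2,5)\cap Q$ is not homogeneous, so Bott vanishing is not directly available, and moreover the family splits into the \emph{ordinary} type (a genuine quadric section of a linear section of $\mathrm{Gr}(2,5)$) and the \emph{Gushel} type (a double cover of such a section). For the ordinary type one must also thread the Koszul complex of the quadric through the computation, using Kapranov's exceptional collection on the quadric; for the Gushel type one pushes the tautological bundle forward along the double cover and checks that the result is still exceptional and acyclic. This is exactly why $V_{10}$ is treated apart from the other four families, and it is the step demanding the most care.
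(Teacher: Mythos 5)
Your proposal is correct and follows exactly the route the paper relies on: the paper gives no independent argument for this proposition but simply cites \cite[Lemma 3.6]{Kuz09} and \cite[6.2, 6.3]{Kuz06}, and your sketch --- producing the acyclic exceptional Mukai bundle $\mathcal{E}$ on each of the five families via Koszul resolutions and Bott vanishing on the ambient homogeneous space, with the extra care needed for the quadric section and the Gushel case when $X=V_{10}$, so that $\langle\mathcal{E},\cO_X\rangle$ is an exceptional pair of the shape required by Definition~\ref{definition:omegi} --- is precisely the content of those references. No gaps; the reduction of $\omega_\cO(X)\ge 2$ to the two verifications $\Ext^\bullet(\mathcal{E},\mathcal{E})=\CC$ and $H^\bullet(X,\mathcal{E})=0$ is stated correctly.
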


Let us summarise the known results about $\omega_\cO(X)$, and, thus, estimates for $\omega(X)$, for smooth Picard rank
one Fano threefolds (providing references for them). For completeness we add their notation, anticanonical degree, the unique non-trivial Hodge number,
and a brief description for each family. For the family number $3$ we provide estimates for general and for
specific varieties in the family. 

	\small{
		\begin{longtable}{|c|c|c|c|p{7cm}|HHp{1.6cm}|c|}
			\caption{Estimates for 
$\omega_\cO(X)$ for Picard rank one Fano threefolds $X$.}\label{table:estimates}\\
			\hline Fam. & Not. & $-K^3$ &$h^{1,2}$&  Brief description & $\omega$ & Ref. & $\omega_\cO(X)$ & Reference \\
			\hline $1$ & $V_2$ & $2$ & $52$ & a hypersurface in $\mathbb{P}(1,1,1,1,3)$ of
			degree $6$ & $\ge 1 $ & Rem.~\ref{remark: omega ge omegaO} & $\ge 1$ & Rem.~\ref{remark: omega ge omegaO} \\
			\hline $2$ & $V_{4}$ & $4$&$30$ & a hypersurface in $\mathbb{P}^4$ of degree
			$4$ or\hfill\break
a double cover of smooth quadric in $\mathbb{P}^{4}$ branched over a surface of degree $8$ & $\ge 1 $ & Rem.~\ref{remark: omega ge omegaO} & $\ge 1$ & Rem.~\ref{remark: omega ge omegaO} \\
			\hline $3$ & $V_{6}$ & $6$ &$20$& a complete intersection of a quadric and a cubic in
			$\mathbb{P}^{5}$ & gen. $\ge 2$,\hfill\break sp. $\ge 1$ & Rem.~??? & gen. $\ge 2$,\hfill\break sp. $\ge 1$ & Prop.~\ref{proposition:quadric and cubic} \\
			\hline $4$ & $V_{8}$ & $8$ &$14$& a complete intersection of three quadrics in
			$\mathbb{P}^{6}$& $\ge 1 $ & Rem.~\ref{remark: omega ge omegaO} & $\ge 1$ & Rem.~\ref{remark: omega ge omegaO} \\
			\hline $5$ & $V_{10}$ & $10$ &$10$& a section of
			$\mathrm{Gr}(2,5)\subset\mathbb{P}^9$ by
			a quadric and a linear subspace of dimension~$7$ or \hfill\break
			a double cover of the variety 15 with the branch locus an anti-canonical divisor
			& $\ge 2 $ & Rem.~\ref{remark: omega ge omegaO} & $\ge 2$ & Prop.~\ref{proposition:index one two objects} \\
			\hline $6$ & $V_{12}$ & $12$ &$7$& a section of
half-spinor embedding of a connected component of orthogonal Lagrangian Grassmannian $\mathrm{OGr}_+(5,10)$ by codimension $5$ subspace
& $\ge 2$ & Rem.~\ref{remark: omega ge omegaO}& $\ge 2$ & Prop.~\ref{proposition:index one two objects} \\
			\hline $7$ & $V_{14}$ & $14$ &$5$ & a section of
			$\mathrm{Gr}(2,6)\subset\mathbb{P}^{14}$
			by a linear subspace of codimension~$5$			& $\ge 2 $ & Rem.~\ref{remark: omega ge omegaO} & $\ge 2$ & Prop.~\ref{proposition:index one two objects} \\

			\hline $8$ & $V_{16}$ & $16$ & $3$& a section of
symplectic Lagrangian Grassmannian $\mathrm{SGr}(3,6)\subset \PP^{13}$ by a linear subspace of codimension $3$
& $\ge 2$ & Rem.~\ref{remark: omega ge omegaO}& $\ge 2$ & Prop.~\ref{proposition:index one two objects} \\
			\hline $9$ & $V_{18}$ & $18$ & $2$& a section of
the adjoint $G_2$-Grassmannian $\mathrm{G_2Gr}(2,7)$ by codimension $2$ subspace
& $\ge 2 $ & Rem.~\ref{remark: omega ge omegaO} & $\ge 2$ & Prop.~\ref{proposition:index one two objects} \\

			\hline $10$ & $V_{22}$ & $22$ &$0$& a zero locus of three sections of the rank
			$3$ vector bundle $\bigwedge^2\mathcal{Q}$, 
where
			$\mathcal{Q}$ is
			the universal quotient bundle on $\mathrm{Gr}(3,7)$ & $4$ & \cite{Kuz96} & $4$ & \cite{Kuz96} \\
			\hline $11$ & $Y_{1}$ & $8$ &$21$& 
a hypersurface in
			$\mathbb{P}(1,1,1,2,3)$ of degree $6$& $\ge 2$ & Rem.~\ref{remark: omega ge omegaO} & $2$ &Lem.~\ref{lemma:omega for index 2}
			\\
			\hline $12$ & $Y_{2}$ & $16$ &$10$& 
a hypersurface in
			$\mathbb{P}(1,1,1,1,2)$ of degree $4$& $\ge 2$ & Rem.~\ref{remark: omega ge omegaO} & $2$ &Lem.~\ref{lemma:omega for index 2}
			\\
			\hline $13$ & $Y_{3}$ & $24$ &$5$& 
a hypersurface in
			$\mathbb{P}^{4}$ of degree $3$& $\ge 2$ & Rem.~\ref{remark: omega ge omegaO} & $2$ &Lem.~\ref{lemma:omega for index 2}
			\\
			\hline $14$ & $Y_{4}$ & $32$ &$2$& 
a complete intersection of two
			quadrics in $\mathbb{P}^{5}$ & $\ge 2$ & Rem.~\ref{remark: omega ge omegaO} & $2$ & Lem.~\ref{lemma:omega for index 2}
			\\
			\hline $15$ & $Y_{5}$ & $40$ &$0$& 
a section of
			$\mathrm{Gr}(2,5)\subset\mathbb{P}^9$ by a linear subspace of
			codimension $3$ & $4$ & \cite{Or91} & $4$ & ~\cite{Or91}\\
			\hline $16$ & $Q$ & $54$ &$0$& 
a hypersurface in $\mathbb{P}^{4}$
			of degree $2$ & $4$ & \cite{Ka88} & $4$ & \cite{Ka88} \\
			\hline $17$ & $\PP^3$ & $64$ &$0$& $\mathbb{P}^{3}$ & $4$ & \cite{Be78} & $4$ & \cite{Be78}\\
			\hline
		\end{longtable}
	}

\section{Landau--Ginzburg models}
\label{section:LG}

We briefly recall constructions and properties of
Landau--Ginzburg models; for details see~\cite{Prz18},~\cite{KP22}, and~\cite{DHKOP24}.

Let $X$ be a Picard rank one Fano threefold. One can associate a \emph{toric Landau--Ginzburg model} to $X$ --- a Laurent polynomial $f$ in $3$
variables, such that the following hold.
First, constant term of regularized generating series of genus $0$ Gromov--Witten invariants
$\widetilde{I}^X_0$ is equal to the main period $I_f$ of the family $\{f=\lambda\}$, $\lambda\in \CC$.
Second, Newton polytope of $f$ is equal to a fan polytope of some toric variety that is a degeneration
of $X$. Third, there should exist a \emph{(log) Calabi--Yau compactification} --- a fiberwise compactification
$(Y,w)$ of $((\CC^*)^3,f)$ such that $Y$ is a smooth Calabi--Yau variety and $w$ is proper (so that
its fibers are K3 surfaces by adjunction). If two toric Landau--Ginzburg models are \emph{mutationally equivalent},
then their Calabi--Yau compactifications differ by flops. This in particular means that the structure
of (a posteriori unique) reducible fiber of $w$, as well as a number of singular fibers, does not depend on a particular
Calabi--Yau compactification, see Lemma~\ref{lemma: odp for all} below. 
Moreover, if $-K_X$ is very ample, then one can associate to $X$ a unique mutation class of \emph{rigid maximally mutable}
Laurent polynomials containing a reflexive one; in the rest cases there exist a mutation class of rigid maximally mutable polynomials
called standard (while there is no reflexive polynomials in it). Alternatively, one can require that \emph{Dolgachev--Nikulin duality}
holds for $X$ and $(Y,w)$. In the Picard rank one case this means the following. Let $\Pic(X)=\ZZ H$, where $H$ is ample.
Then a general anticanonical section of $X$ is a Picard rank one K3 surface polarized by the lattice $\langle 2k\rangle$, where
$2k=H^3$. Then fibers of $w$ are K3 surfaces polarized by the lattice $M_k=U\oplus 2E_8(-1)\oplus \langle-2k\rangle$,
where $U$ is a hyperbolic lattice. Since there is a one-dimensional family of $M_k$-polarized K3 surfaces, one can imply that
$Y$ is $i_X$-to-one covering of this family, which gives uniqueness of $(Y,w)$.
We call any toric Landau--Ginzburg model whose Calabi--Yau compactification is $(Y,w)$ standard,
and $(Y,w)$ itself just a Landau--Ginzburg model for $X$ for short.

\begin{lemma}
\label{lemma: odp for all}
Let $(Y,w)$ be a Landau--Ginzburg model for a smooth 
Fano threefold $X$.
Let the fiber $w^{-1}(\lambda)$, $\lambda\in \CC$, have $k$ isolated singularities which are ordinary double points.
Then the fiber $(w')^{-1}(\lambda)$ of any other Landau--Ginzburg model $(Y',w')$ for $X$
also has $k$ isolated singular points which are ordinary double ones.
\end{lemma}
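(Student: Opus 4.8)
The plan is to deduce the statement from the fact that two Landau--Ginzburg models $(Y,w)$ and $(Y',w')$ for the same $X$ are related by flops, which in turn follows because the corresponding toric Landau--Ginzburg models are mutationally equivalent and mutationally equivalent Laurent polynomials have Calabi--Yau compactifications differing by flops (as recalled just before the statement). So the first step is to reduce to the case where $Y$ and $Y'$ are related by a single flop $\phi\colon Y\dashrightarrow Y'$ over a common base, compatible with the superpotentials, i.e.\ $w'\circ\phi=w$ away from the flopping locus; a general flop is a composition of such, so it suffices to treat one.

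Next I would analyze how a flop interacts with the fiber over a fixed value $\lambda$. A flop $Y\dashrightarrow Y'$ is an isomorphism outside a subvariety $Z\subset Y$ (a union of flopping curves) of codimension at least $2$, and $Y\setminus Z\cong Y'\setminus Z'$ where $Z'\subset Y'$ is the flopped locus, also of codimension $\ge 2$. The key observation is that the flopping curves, being contracted by a small contraction $Y\to \bar Y$, lie in fibers of $w$: indeed $w$ factors through $\bar Y$ (the contraction is over the base $\CC$), so each flopping curve is contained in some fiber $w^{-1}(\mu)$. Thus for all but finitely many $\lambda$, the fiber $w^{-1}(\lambda)$ misses $Z$ entirely and is isomorphic to $(w')^{-1}(\lambda)$, so the statement is trivial there. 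The content is at the finitely many "bad" values $\mu$ where flopping curves sit; I must show that even there the isomorphism $Y\setminus Z\cong Y'\setminus Z'$ restricts to an isomorphism of the smooth loci of the fibers, and that the singular points of $w^{-1}(\mu)$ are disjoint from $Z$.

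For this I would use that $w^{-1}(\mu)$ is a K3 surface (by adjunction, since $Y$ is Calabi--Yau and $w$ proper), hence has at worst isolated (here, ordinary double point) singularities, so it is a normal surface; a flopping curve contained in it would be a curve on which $w$ is constant, and I claim such curves pass only through smooth points of the fiber. One way: the flopping contraction $Y\to\bar Y$ is crepant and small, so $\bar Y$ is Gorenstein canonical with $K_{\bar Y}$ trivial near the base, and the image $\bar Y$ still maps properly to $\CC$ with K3 (or mildly singular) fibers; an ODP of $w^{-1}(\mu)$ is a point where $Y$ itself is smooth but the fiber is singular, and near such a point the map $w$ looks like $xy+zt$ (or $x^2+y^2+z^2$ in suitable coordinates), whose only curves contracted by $w$ through the ODP would force $Y$ to be singular or the contraction to be non-small — so the ODP locus is disjoint from $Z$. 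Granting this, $w^{-1}(\mu)\setminus\{\text{ODPs}\}$ lies in $Y\setminus Z$, maps isomorphically to an open subset of $(w')^{-1}(\mu)$, and by symmetry (the inverse flop) the images of the ODPs of the two fibers correspond; since the singular points are the finitely many points where the smooth loci fail to be proper, they match up and remain ordinary double points. Hence $k$ is the same for both.

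The main obstacle I anticipate is the local claim that flopping curves avoid the ordinary double points of the fibers — equivalently, that the small crepant contraction does not contract any curve through an ODP of a fiber. This is where one must use the precise local structure: at an ODP $p$ of $w^{-1}(\mu)$ the total space $Y$ is smooth (that is the meaning of "isolated singularity of a fiber" with smooth total space), and a flopping curve is by definition contained in the singular locus of the corresponding contraction, which for a smooth threefold means it is contracted to a point with a non-$\QQ$-factorial or non-terminal singularity; a calculation with the versal deformation of the local model $w=xy+zt$ near $p$ shows the only proper curves in nearby fibers through $p$ are not flopping (they have nonnegative intersection with $K$). I would phrase this as: the ODPs of fibers are cut out by the Jacobian ideal of $w$, which is a well-defined closed subscheme of $Y$ independent of the model up to the flop isomorphism, and its support avoids $Z$ by the local analysis; everything else is formal.
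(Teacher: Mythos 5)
Your reduction to a single flop $\varphi\colon Y\dashrightarrow Y'$ compatible with the superpotentials, and the observation that flopping curves lie in fibers of $w$ so that all but finitely many fibers are untouched, agree with the paper's setup. But everything then hinges on your claim that the flopping locus avoids the ordinary double points of the fibers, and that claim is false; this is precisely the non-trivial case, and it is the case the paper's proof is actually devoted to. A local counterexample: let $\bar Y=\{xy=zt\}\subset\CC^4$ with its two small resolutions $Y,Y'\to\bar Y$ (the Atiyah flop), and let $w=x+y^2$. In the chart of $Y$ with coordinates $(y,z,u)$, where $x=zu$, $t=yu$, the flopping curve is $C=\{y=z=0\}$ and the fiber $w^{-1}(0)=\{zu+y^2=0\}$ has an ordinary double point at the origin, which lies on $C$. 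So a flopping curve can very well pass through an ODP of a fiber. Note also that no intersection-theoretic argument of the kind you sketch can rule this out: on a Calabi--Yau compactification $K_Y$ is trivial, so every curve in a fiber satisfies $K_Y\cdot C=0$, which is exactly the numerical condition a flopping curve satisfies. For the same reason the final ``formal'' step via the critical subscheme of $w$ gives nothing: that subscheme is only identified with the critical subscheme of $w'$ away from the flopping loci, i.e.\ away from the points where you need the comparison.

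What is missing is therefore the heart of the lemma: if the flopping curve $C$ passes through the ODP $p$ of $F=w^{-1}(\lambda)$, one must still show that $F'=(w')^{-1}(\lambda)$ acquires exactly one ordinary double point near the flopped curve. The paper does this by factoring $\varphi$ (via weak factorization) through a blow-up $\pi_1$ of a curve through $p$: the strict transform $F_1$ of $F$ is a smooth K3 surface which is simultaneously a minimal resolution of $F$ at $p$ and of $F'$, and the induced map $F_1\to F'$ contracts a single $(-2)$-curve, producing one ODP on $F'$. (In the local model above one checks directly that the other small resolution gives $(w')^{-1}(0)=\{tu'+y^2=0\}$ in the analogous chart, again with exactly one ODP on the flopped curve.) You correctly identified this local claim as the main obstacle, but the argument you propose for it does not hold, so the proof has a genuine gap at its crucial step.
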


\begin{proof}
It is enough to prove the assertion of the lemma in the neighborhood of one singular point.
Let $p$ be an ordinary double point on $F=w^{-1}(\lambda)$.
Since anticanonical classes of both $(Y,w)$ and $(Y',w')$ are fibers, they are K-equivalent, so
standard Hironaka-type arguments shows that $Y$ and $Y'$ differ by flops.
More precise, by Weak factorization theorem, there is a diagram
$$
\xymatrix{
& V\ar[dl]_\pi \ar[dr]^{\pi'} & \\
Y\ar@{-->}[rr]^\varphi & & Y',
}
$$
where $\pi$ and $\pi'$ are given by a sequence of blow ups in smooth centers such that
exceptional divisors of $\pi$ and $\pi'$ coincide.

If $\varphi$ is an isomorphism in the neighborhood of $p$, then there is nothing to prove.
Thus, $\varphi$ is a flop in a curve $C$ passing through $p$.
Let $F'=(w')^{-1}(\lambda)$.
Let 
$C'\in F'$ be the flopping curve on $F'$, so that $\varphi$ is an isomorphism
between  $F\setminus C$ and
$F'\setminus C'$ in a neighborhood of $p$.
We may assume that $\pi=\pi_2\pi_1$, where $\pi_1
$ is a blow up of a curve 
passing through $p$.
Let $F_1$ be a strict transform of $F$. Then $F_1$ is a smooth K3 surface which is a minimal resolution
of $F'$, and in the neighborhood of $C$ the map $\pi'\pi_2^{-1}=\varphi\pi_1$ is a contraction
of $C$. Thus, $F'$ has a single ordinary double point in the neighborhood of $\varphi\pi_1(C)$.
\end{proof}

\begin{remark}
For the Picard rank one case one can derive the assertion of Lemma~\ref{lemma: odp for all} from Dolgachev--Nikulin
duality. That is, in the notation of the proof of the lemma, both $F$ and $F'$ are K3 surfaces of Picard rank $19$.
Note that $F'$ is singular, otherwise after the flop $\varphi^{-1}$ the surface $F$ is smooth.
However obviously a singular K3 surface of Picard rank $19$ can have at most a single ordinary double point.
\end{remark}

Thus, to compute $\varepsilon(X)$, one needs to pick up any appropriate toric Landau--Ginzburg model,
construct a Calabi--Yau compactification, and count the number of fibers with ordinary double points.
Universal construction of Calabi--Yau compactifications is given in~\cite{Prz17}. However the more convenient way
to keep track the singularities is presented in~\cite{ChP25}, see also~\cite{ChP20}.
We briefly recall it; all details are given in loc. cit.
If $X=X_n$ and $n\neq 1,11$, that is $V_2$ or $Y_1$, then one can choose a toric Landau--Ginzburg model
$f(x,y,z)$ for $X$ such that under the natural embedding
$$
(\CC^*)^3=\Spec \CC[x^{\pm 1}, y^{\pm 1}, z^{\pm 1}]\hookrightarrow \Proj \CC[x,y,z,t]=\PP^3
$$
the family $\{f=\lambda\}$, $\lambda\in \CC$, compactifies to a pencil $\cL_\lambda$ of quartics in $\PP^3$, where $\lambda\in \PP^1$ now. Thus the Calabi--Yau compactification
of $f$ is a (crepant over the base) resolution $\cN_\lambda$ of a base locus of $\cL_\lambda$.
(So that $\cN_\lambda=w^{-1}(\lambda)$.)
If $n=1,11$, then one can find a certain toric Landau--Ginzburg model that after a birational change of variables
again gives a pencil of quartics (see,~\cite[proof of Theorem 18]{Prz18}); we also denote it by $\cL_\lambda$.
The resolution of the base locus of $\cL_\lambda$ is given by a sequence of blow-ups of smooth curves. Thus,
the non-central fibers of $\cN_\lambda$ are birational to elements of $\cL_\lambda$.
Singularities of fibers of the resolved pencil lie over singularities of the elements of $\cL_\lambda$.
If the latter singular points do not lie on the base locus, then the resolution is
biregular in their neighborhood. Otherwise we need to resolve the singular locus
to realize if singularities of fibers appear or not. Thus to compute $\varepsilon(X)$
one needs to make a resolution of the base locus in the neighborhood of singularities of fibers of $\mathcal{L}_\lambda$.

\begin{lemma}
\label{lemma:P3}
Let $X=X_{17}$ be a projective space, and let $(Y,w)$ be a Landau--Ginzburg model for $X$.
Then $(Y,w)$ has four singular fibers, and each of them has a single ordinary double point.
Thus, $\varepsilon(X)=4$ and $(Y,w)$ is a Lefschetz pencil.
\end{lemma}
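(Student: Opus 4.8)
The plan is to take the standard toric Landau--Ginzburg model of $\PP^3$ to be the Givental--Hori--Vafa Laurent polynomial $f=x+y+z+\frac1{xyz}$, whose Newton polytope is the fan polytope of $\PP^3$, and to analyse its Calabi--Yau compactification via the associated pencil of quartics, as in the discussion preceding the lemma. Multiplying $f=\lambda$ by $xyz$ and homogenizing in $\PP^3=\Proj\CC[x,y,z,t]$ produces the pencil
$$
\cL_\lambda=\bigl\{\,G_\lambda:=x^2yz+xy^2z+xyz^2+t^4-\lambda\,xyzt=0\,\bigr\}\subset\PP^3,\qquad\lambda\in\CC,
$$
whose fibrewise (crepant over the base) resolution $\cN_\lambda=w^{-1}(\lambda)$ over the base locus is a Calabi--Yau compactification $(Y,w)$ of $f$. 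By Lemma~\ref{lemma: odp for all} it suffices to count the singular fibres of this particular $(Y,w)$ and the number of ordinary double points in each.

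First I would identify the base locus. Since $G_\lambda-G_0=-\lambda\,xyzt$, it equals $\{G_0=0\}\cap\{xyzt=0\}$, i.e.\ the union of the four lines $\{t=x=0\}$, $\{t=y=0\}$, $\{t=z=0\}$, $\{t=x+y+z=0\}$ in the plane $\{t=0\}$. Next I would locate the singular points of the members $\cL_\lambda$ by solving $\partial_xG_\lambda=\partial_yG_\lambda=\partial_zG_\lambda=\partial_tG_\lambda=0$ (which already forces $G_\lambda=0$ by the Euler relation). Using $\partial_xG_\lambda=yz(2x+y+z-\lambda t)$ together with its cyclic analogues and $\partial_tG_\lambda=4t^3-\lambda xyz$, one finds: on $\{t=0\}$ the only solutions are the six nodes of the above line configuration, at which \emph{every} member is singular and which all lie on the base locus; on $\{t\neq0\}$ (put $t=1$) one gets $xyz=4/\lambda\neq0$, hence $2x+y+z=x+2y+z=x+y+2z=\lambda$, forcing $x=y=z=\lambda/4$ and then $\lambda^4=256$. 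Thus off the base locus there is exactly one singular point $[\lambda/4:\lambda/4:\lambda/4:1]$, in each of the four fibres over $\lambda\in\{4,-4,4i,-4i\}$; this matches the four nondegenerate critical points of $f$ on $(\CC^*)^3$, whose critical values are $4\zeta$ with $\zeta^4=1$.

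To finish, I would check these four points are ordinary double points: in the chart $t=1$ the Hessian of $G_\lambda$ at $[a:a:a:1]$ with $a=\lambda/4$ is $a^2$ times the matrix with $2$ on the diagonal and $1$ off it, of determinant $4$, hence nondegenerate. Since these points lie off the base locus, the resolution is biregular near them, so each contributes exactly one ODP to $w^{-1}(\lambda)$. It then remains to control the base locus: along the four base lines $\cL_\lambda$ is smooth except at the six configuration nodes, and at each node the local equation of $\cL_\lambda$ is a rational double point whose type does not depend on $\lambda$ (e.g.\ at a coordinate vertex it is $yz+t^4+(\text{terms of order}\ge3)$, an $A_3$ point); carrying out the explicit resolution of the base locus as in \cite{Prz18},\cite{ChP25} one checks that $Y$ is smooth and $w$ is submersive near the exceptional locus for \emph{every} $\lambda\in\CC$, so no further singular fibre --- and, in accordance with $h^{1,2}(\PP^3)=0$, no reducible fibre --- appears. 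Therefore the singular fibres of $w$ are precisely the four fibres over $\lambda^4=256$, each carrying a single ODP, whence $\varepsilon(\PP^3)=4$ and $(Y,w)$ is a Lefschetz pencil.

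The main obstacle is this last step: one must be sure that resolving the base locus near the coordinate vertices, where every member of the pencil is singular, produces no hidden singular fibre, and this is exactly the point where the explicit construction of loc.\ cit.\ is used; everything else is the elementary singularity computation above. (As a sanity check, by Lemma~\ref{lemma: odp for all} any mutationally equivalent choice of standard Laurent polynomial would give the same count, so there is no loss in working with the Givental mirror.)
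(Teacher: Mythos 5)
Your proposal is correct and follows essentially the same route as the paper: the Givental-type polynomial $x+y+z+\tfrac{1}{xyz}$, the same quartic pencil, the four ordinary double points $[\alpha:\alpha:\alpha:1]$ over $\lambda=4\alpha$ with $\alpha^4=1$ off the base locus, and the six du Val ($\dvA_3$-type) points of the line configuration in $\{t=0\}$ whose type is independent of $\lambda$ and which therefore disappear under the resolution of the base locus. Your treatment is in fact somewhat more explicit (base locus, Hessian check) than the paper's, which simply asserts these points are $\dvA_3$ for all $\lambda$ and concludes.
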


\begin{proof}
Let us take the toric Landau--Ginzburg model of Givental's type
$$
x+y+z+\frac{1}{xyz}.
$$
The family of its fibers can be compactified to the pencil of quartics given by
$$
(x+y+z)xyz+t^4=\lambda xyzt, \ \ \lambda \in \CC.
$$

Let $\alpha$ be $4$-th root of $1$. Singularities of members of the pencil are
\begin{multline*}
P_1^1=[1:0:0:0],\ \ P_1^2=[0:1:0:0],\ \ P_1^3=[0:0:1:0],\\
P_2^1=[1:-1:0:0],\ \
P_2^2=[1:0:-1:0], \ \
P_2^3=[0:1:-1:0],
\end{multline*}
as well as four ordinary double points $[\alpha:\alpha:\alpha:1]$ over $\lambda=4\alpha$ that do not lie at the base locus.
One can see that for any $\lambda$ the points $P_i^j$ are du Val of type $\dvA_3$.
Thus, we get $\varepsilon(X)=4$.
\end{proof}

\begin{lemma}[{cf.~\cite{Hu22}}]
\label{lemma:quadric}
Let $X=X_{16}$ be a quadric threefold, and let $(Y,w)$ be a Landau--Ginzburg model for $X$.
Then $(Y,w)$ has four singular fibers, and each of them has a single ordinary double point.
Thus, $\varepsilon(X)=4$ and $(Y,w)$ is a Lefschetz pencil.
\end{lemma}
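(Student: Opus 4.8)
The plan is to run the same machine as in the proof of Lemma~\ref{lemma:P3}: fix an explicit toric Landau--Ginzburg model of Givental's type for $X=Q$, compactify the family of its fibers to a pencil $\cL_\lambda$ of quartic surfaces in $\PP^3$, list the isolated singular points of the members of $\cL_\lambda$, and decide which of them survive in the Calabi--Yau compactification. Points lying off the base locus of $\cL_\lambda$ give ordinary double points of fibers of $(Y,w)$ directly; points on the base locus must be followed through a crepant resolution of $\mathrm{Bs}\,\cL_\lambda$, which is done most conveniently as in~\cite{ChP25} (see also~\cite{Prz17}).

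Concretely, I would take the standard Givental-type Laurent polynomial for the quadric threefold $Q\subset\PP^4$,
$$
f=x+y+\frac{(1+z)^2}{xyz},
$$
whose main period reproduces the regularized quantum period of $Q$. Clearing denominators and homogenizing with a fourth coordinate $t$, the family $\{f=\lambda\}$ compactifies to
$$
\cL_\lambda=\bigl\{\,xyz(x+y)+t^2(t+z)^2=\lambda\,xyzt\,\bigr\}\subset\PP^3,\qquad\lambda\in\CC.
$$
Computing the critical points of $f$ on the torus gives exactly three of them, namely $x=y$ with $x^3=4$ and $z=1$, lying over the three pairwise distinct values $\lambda=3\zeta\sqrt[3]{4}$, $\zeta^3=1$; none of these points meets the base locus, so each contributes a single ordinary double point to the corresponding fiber. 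These are the $i_X=3$ nodal fibers predicted by Theorem~\ref{theorem:complete intersections}, and the fourth node must come from the ``special'' fiber of that theorem, which --- since $h^{1,2}(Q)=0$, so there is no reducible fiber --- carries only an isolated singularity, located over $\lambda=0$.

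The remaining task is to analyze $\mathrm{Bs}\,\cL_\lambda$ --- a configuration of six lines in $\PP^3$ (the four coordinate-type lines in $\{t=0\}$ together with $\{x=0,\,t+z=0\}$ and $\{y=0,\,t+z=0\}$) --- and the singular points of the members lying on it. As in Lemma~\ref{lemma:P3}, at the ``generic'' boundary points the members of $\cL_\lambda$ should acquire only du Val singularities of type $\dvA_k$, which the crepant resolution of the base locus replaces by smooth points and which therefore do not affect $\varepsilon(X)$. The features absent from the $\PP^3$ computation, and hence the main obstacle, are the points $[0:0:1:0]$ and $[0:0:1:-1]$: there several lines of the configuration are tangent to the members, so the members acquire $\dvD_4$ singularities (degenerating from $\dvA_1$ as $\lambda\to0$ at the second point), and one must carry out the resolution of the base locus near these points and check that exactly one ordinary double point survives --- in the fiber over $\lambda=0$ --- while every other resolved fiber is a smooth K3 surface. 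Granting this, $(Y,w)$ has four nodal fibers over four distinct values of $\lambda$ and no further singular fibers, so $\varepsilon(X)=4$ and $(Y,w)$ is a Lefschetz pencil. As a check, this agrees with Proposition~\ref{proposition:quadrics exceptional} (the dimension of $Q$ being odd, each singular fiber of a Givental-type model carries a single singular point and $\varepsilon(Q)=\omega(Q)=\omega_\cO(Q)=4$) and with~\cite{Hu22}, and by Lemma~\ref{lemma: odp for all} the count is independent of the chosen Landau--Ginzburg model.
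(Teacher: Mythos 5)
Your setup coincides with the paper's: up to a permutation of the torus coordinates your Laurent polynomial $x+y+(1+z)^2/(xyz)$ is exactly the one used there, the compactified pencil of quartics is the same, and the three ordinary double points in the torus over $\lambda=3\alpha$ with $\alpha^3=4$ are correctly located and correctly seen to lie off the base locus. However, the entire nontrivial content of the lemma is the part you explicitly defer with ``Granting this'': producing the fourth node and verifying that the base-locus singularities contribute nothing else. Your enumeration of the singular points of the members of $\cL_\lambda$ is moreover incomplete: besides the fixed points you mention, there are two singular points that \emph{move} with $\lambda$ (in your coordinates $[\lambda:0:-1:1]$ and $[0:\lambda:-1:1]$, lying on the base lines $\{y=0,\,t+z=0\}$ and $\{x=0,\,t+z=0\}$), and it is precisely their collision with $[0:0:1:-1]$ as $\lambda\to 0$ that produces the fourth node. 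The paper's proof blows up $[0:0:1:-1]$ and finds three residual singularities: an ordinary double point over $\lambda=0$ lying off the base locus (the fourth node), and two points sitting over the moving singularities which are du Val of type $\dvA_1$ for all $\lambda$ and therefore disappear after resolving the base locus. Without this local computation the equality $\varepsilon(X)=4$ is not established, and a fiber with extra or non-isolated singularities over $\lambda=0$ is not excluded.

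Your appeal to Theorem~\ref{theorem:complete intersections} to conclude that the fourth node ``must come from the special fiber, located over $\lambda=0$'' does not close this gap: that theorem only guarantees $i_X=3$ nodal fibers plus one further distinguished fiber whose singularities it does not control (they are ``possibly non-isolated''), and the assertion that for the quadric this fiber carries exactly one ordinary double point is essentially the statement being proved --- it is the content of Proposition~\ref{proposition:quadrics exceptional} and of~\cite{Hu22}, which is why the lemma is labelled ``cf.''. So either carry out the resolution at $[0:0:1:-1]$ as above, or restructure the argument as an honest citation of~\cite{Hu22}; as written, the proposal identifies the right strategy and the right pencil but omits its decisive step.
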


\begin{proof}
Let us take the toric Landau--Ginzburg model of Givental's type
$$
\frac{(x+1)^2}{xyz}+y+z.
$$
The family of its fibers can be compactified to the pencil of quartics given by
$$
(x+t)^2t^2+(y+z)xyz=\lambda xyzt, \ \ \lambda \in \CC.
$$

Let $\alpha$ be a cubic root of $4$. Singularities of members of the pencil are
\begin{multline*}
P_1^1=[0:1:0:0],\ \ P_1^2=[0:0:1:0],\ \ P_2=[1:0:0:0],\ \ P_3=[1:0:0:-1],\\
P_4^1=[-1:\lambda:0:1], \ \
P_4^2=[-1:0:\lambda:1], \ \ P_5=[0:1:-1,0],
\end{multline*}
as well as thee ordinary double points $[1:\alpha:\alpha:1]$ over $\lambda=3\alpha$ that do not lie at the base locus.
One can see that for any $\lambda$ the points $P_1^1$ and $P_1^2$ are du Val of type $\dvA_3$,
the point $P_2$ is of type $\dvD_4$, the point $P_5$ is of type $\dvA_3$.
Thus they resolve after the resolution of the base locus.
The points $P_3$, $P_4^1$, and $P_4^2$ are du Val for $\lambda\neq 0$, and they collide when
$\lambda$ goes to $0$. Blowing $P_3$ up one gets $3$ singularities. First an ordinary double point over $0$
that does not lie on the base locus. Second, those that lie over $P_4^1$ and $P_4^2$ (in a generic fiber); they are du Val of type $\dvA_1$ for all $\lambda$, so they do not produce singularities of fibers.
Thus, we get $\varepsilon(X)=4$.
\end{proof}

Note that if $X$ is a threefold Fano complete intersection, but not a projective space or a quadric, then
$h^{1,2}(X)\neq 0$. 

\begin{theorem}[{see \cite[Theorem 22]{Prz13}
}]
Let $X$ be a Fano threefold, and let $(Y,w)$ be a Calabi--Yau compactification of 
toric Landau--Ginzburg
model for $X$. Then either $h^{1,2}(X)=0$, or the central fiber of $(Y,w)$ 
consists of $h^{1,2}(X)+1$ components.
\end{theorem}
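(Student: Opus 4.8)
The plan is to complete $w$ to a K3 fibration over $\PP^1$, to express the number of irreducible components of the central fibre as a Hodge-theoretic invariant of the Landau--Ginzburg model, and to match that invariant with $h^{1,2}(X)$ via the period property that defines the toric Landau--Ginzburg model.

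First I would extend $w$ to a proper morphism $\bar w\colon \bar Y\to\PP^1$ (the quartic-pencil presentation recalled in \S\ref{section:LG} does exactly this) and assemble the structural input. By Dolgachev--Nikulin duality the smooth fibres are $M_k$-polarised K3 surfaces of Picard rank $19$, so over the complement of the critical values $H^2$ of a fibre splits off a constant rank-$19$ sublattice and a weight-$2$ variation $\mathcal T$ of rank $3$; by Theorem~\ref{theorem:ODP for LG} every singular fibre carries only ordinary double points except for the one reducible or non-reduced fibre guaranteed by Lemma~\ref{lemma: odp for all}, which is by definition the central one. A K3 surface with an ordinary double point is irreducible, so the only fibre with more than one component is the central fibre $F_0$; write $N$ for its number of components, so that the assertion becomes $N=h^{1,2}(X)+1$, with the convention that $N=1$ (no reducible fibre) when $h^{1,2}(X)=0$.

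Second, I would argue that $N-1$ is a Hodge-theoretic invariant of $(Y,w)$. Each of the $N-1$ ``extra'' irreducible components of $F_0$ restricts to zero on a general fibre, hence gives, modulo the class of a fibre, a genuinely new divisor class on the total space; together these account for $N-1$ independent classes in the part of the mixed Hodge structure on the relative cohomology $H^\bullet(Y,F_{\mathrm{gen}})$ of type $(1,1)$ --- the Landau--Ginzburg Hodge number customarily denoted $f^{1,1}(Y,w)$ --- whereas the ordinary double point fibres, being irreducible, feed only the $H^3$-type (``vanishing cycle'') part measured by $\varepsilon(X)$ and contribute nothing here, so that $f^{1,1}(Y,w)=N-1$. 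On the other side, the defining property of the toric Landau--Ginzburg model --- that the main period $I_f$ of $\{f=\lambda\}$ coincides with the regularised $\widetilde{I}^X_0$ --- identifies $\mathcal T$, up to Tate twist, with the variation underlying the quantum differential equation of $X$, and the mirror-symmetric identity $f^{1,1}(Y,w)=h^{1,2}(X)$ then yields $N=h^{1,2}(X)+1$. In practice --- and this is presumably the route of~\cite{Prz13} --- one instead resolves the base locus of the quartic pencil family by family, reads off $F_0$ explicitly in each of the seventeen deformation classes (treating $V_2$ and $Y_1$ by their own models), and checks the count against the Hodge numbers in the classification table.

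The main obstacle is establishing, uniformly over all families, that $N-1$ equals $h^{1,2}(X)$. A shortcut through Euler characteristics fails because the Euler number of the resolved total space depends on how one resolves the typically non-reduced, non-generic base locus of the pencil, so the identity cannot be read off from a crude additivity count; one must instead either control enough of the Picard--Fuchs system to determine the local monodromy of $\mathcal T$ at the central value --- and run the full mixed Hodge theory of $F_0$ (semistable reduction, the monodromy weight filtration, and the boundary corrections relating $\bar Y$ to the affine total space) to pin down the ``$+1$'' exactly --- or else carry out, case by case, the seventeen explicit resolutions, which is where the real work lies.
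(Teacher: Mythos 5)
The paper does not prove this statement; it is imported verbatim from \cite[Theorem 22]{Prz13}, and the proof there is exactly the fallback you describe in your last sentences: for each deformation family one compactifies the chosen toric Landau--Ginzburg model to a pencil of quartics (or reduces to one by a birational change of variables, as for $V_2$ and $Y_1$), resolves the base locus, writes down the reducible fiber explicitly, and compares the number of its components with the value of $h^{1,2}(X)$ from the classification tables. So your ``in practice'' route is the actual one, and your structural preliminaries (properness over $\PP^1$, irreducibility of the fibers with only ordinary double points, reduction to counting components of the single distinguished fiber) are consistent with how the computation is organized in \S\ref{section:LG}. One caveat: you invoke Theorem~\ref{theorem:ODP for LG} and Dolgachev--Nikulin duality, which in this paper are stated only for Picard rank one, whereas the theorem as quoted is for arbitrary Fano threefolds; for the case-by-case proof this input is not needed anyway, since one sees directly which fibers of the resolved pencil are reducible.

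Your primary, ``conceptual'' route has a genuine gap at its central step. The identity $f^{1,1}(Y,w)=h^{1,2}(X)$ does not follow from the defining period identity $I_f=\widetilde{I}^X_0$: that identity constrains only the rank-$3$ transcendental variation (equivalently, the Picard--Fuchs operator), and says nothing a priori about the $(1,1)$-part of the relative mixed Hodge structure, which is governed by the degenerate fibers. The equality of Landau--Ginzburg Hodge numbers with Hodge numbers of the Fano is precisely the Katzarkov--Kontsevich--Pantev conjecture, whose proof for Fano threefolds (\cite{ChP25}) \emph{uses} the explicit description of the reducible fiber rather than delivering it; taking it as input here is circular. The identification $f^{1,1}(Y,w)=N-1$ also needs more care than ``each extra component gives a new class'': one must check that the components of the central fiber are the only contributors (no contribution from the compactifying divisor at infinity or from non-reducedness of the central fiber), which again requires the explicit resolutions. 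So the Hodge-theoretic argument, as written, assumes what is to be proved; the case-by-case computation is not an optional shortcut but the substance of the proof.
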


Let $X$ now be an $n$-dimensional Fano complete intersection. Then one can construct so-called \emph{Givental's toric Landau--Ginzburg model},
see~\cite{PSh15} for details. (In the threefold case they are standard.) Its Calabi--Yau compactification has a fiber called central. It usually has non-isolated
singularities; say, it is reducible if and only if $h^{1,n-1}(X)\neq 0$, see~\cite[Theorem 1.2]{PSh15}.

\begin{theorem}[{\cite[Theorem 1.2]{Prz25}}]
Let $X$ be a Fano complete intersection. 
Then except for the central fiber of a Calabi--Yau compactification of Givental's toric Landau--Ginzburg model of $X$, there are
exactly $i_X$ singular fibers, and they have a single ordinary double point in each.
\end{theorem}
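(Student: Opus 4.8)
The plan is to realise Givental's superpotential as an explicit Laurent polynomial, compactify the family $\{f=\lambda\}$ to a crepant resolution of a pencil of hypersurfaces, and then locate the singularities of the fibers in two regimes: inside the torus $(\CC^*)^n$, where they come from critical points of $f$, and along the boundary, where --- apart from one distinguished fiber --- nothing happens.

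First I would set up Givental's model. For $X=X_{d_1,\dots,d_k}\subset\PP^N$ with $N=n+k$ and $i_X=N+1-\sum_j d_j$, it can be written, up to a monomial change of variables, as
$$
f=x_1+\dots+x_{i_X-1}+\frac{\prod_{j=1}^{k}\bigl(1+y_{j,1}+\dots+y_{j,d_j-1}\bigr)^{d_j}}{x_1\cdots x_{i_X-1}\cdot\prod_{j=1}^{k}y_{j,1}\cdots y_{j,d_j-1}},
$$
a Laurent polynomial in the $n=(i_X-1)+\sum_j(d_j-1)$ variables $x_i,y_{j,l}$; the constant term of $f^{i_Xm}$ is $(i_Xm)!\prod_j(d_jm)!/(m!)^{N+1}$, the regularized $I$-series of $X$, which fixes the normalisation. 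Clearing denominators and homogenising by one variable $t$, the family $\{f=\lambda\}$ compactifies to a pencil $\cL_\lambda$ of hypersurfaces in $\PP^n$ (possibly after a birational change of variables as in~\cite{ChP25}; for $\PP^n$ and quadrics of any dimension this is the higher-dimensional analogue of the pencils of quartics used in Lemmas~\ref{lemma:P3} and~\ref{lemma:quadric}), and a Calabi--Yau compactification $(Y,w)$ is a crepant resolution of the base locus of $\cL_\lambda$, cf.~\cite{Prz17},~\cite{ChP25}; by Lemma~\ref{lemma: odp for all} the number and type of isolated singularities of the non-central fibers of $w$ do not depend on this choice. Write $F_0=w^{-1}(0)$ for the central fiber (the resolution of the member $\cL_0$). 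By~\cite[Theorem~1.2]{PSh15} (and~\cite[Theorem~22]{Prz13} in the threefold case) $F_0$ is the only fiber that can carry non-isolated singularities, and does so precisely when $h^{1,n-1}(X)\ne0$, in which case it is reducible. So it suffices to prove that every other singular fiber of $w$ has a single ordinary double point, and that there are exactly $i_X$ of them.

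Next I would find the critical points of $f$ in $(\CC^*)^n$. Differentiating in $y_{j,l}$ gives, up to a nonzero monomial, $\bigl(1+\sum_m y_{j,m}\bigr)^{d_j-1}\bigl(d_jy_{j,l}-1-\sum_m y_{j,m}\bigr)$; if $1+\sum_m y_{j,m}=0$ for some $j$ then $f$ vanishes, and --- whenever $i_X\ge2$ --- also $\partial_{x_i}f=1\ne0$, so such a point is not critical (when $i_X=1$ this locus is critical, but it lies in $F_0$). Hence every critical point of $f$ outside $F_0$ satisfies $d_jy_{j,l}=1+\sum_m y_{j,m}$ for all $j,l$, which forces $y_{j,l}=1$ and collapses the fraction to the constant $c:=\prod_j d_j^{\,d_j}\ne0$; then the equations $\partial_{x_i}f=0$ read $x_i=g$, with $g$ the value of the fraction, so $x_1=\dots=x_{i_X-1}=x$ and $x^{i_X}=c$. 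Thus $f$ has exactly $i_X$ critical points outside $F_0$, indexed by the $i_X$-th roots of $c$; at each of them $g=x$, the critical value is $i_Xx$, and these $i_X$ values are pairwise distinct and nonzero. Finally, at such a point the Hessian of $f$ is block diagonal --- an $x$-block equal to $\tfrac1x(I+J)$ and, for each $j$, a $y_j$-block equal to $x\bigl(I-\tfrac1{d_j}J\bigr)$, where $J$ is an all-ones matrix of the appropriate size --- and every block is nonsingular, so $f$ is Morse at each of the $i_X$ points. Since the torus is open in $Y$ and $w|_{(\CC^*)^n}=f$, each contributes a single ordinary double point to its fiber, these $i_X$ fibers are pairwise distinct and distinct from $F_0$, and $w$ has no other singular fibers meeting $(\CC^*)^n$.

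It remains to rule out singular fibers of $w$ supported on the boundary $Y\setminus(\CC^*)^n$ other than $F_0$; this is the main obstacle. One must show that the crepant resolution of the base locus of $\cL_\lambda$ adds no singularity to $w^{-1}(\lambda)$ for $\lambda\ne0$: along the base locus every member of $\cL_\lambda$ should have only du Val singularities --- of types $\dvA_r$, $\dvD_r$ read off from the facets of the Newton polytope of $f$ (the fan polytope of a toric degeneration of $X$) and independent of $\lambda$ --- so that the resolution absorbs them uniformly, with all genuinely non-transverse behaviour (collisions of such points with each other or with the moving singular point) concentrated over $\lambda=0$. For $\PP^n$ this is Lemma~\ref{lemma:P3} and for quadrics of arbitrary dimension Lemma~\ref{lemma:quadric} together with Proposition~\ref{proposition:quadrics exceptional}; in general I would either analyse the base locus of $\cL_\lambda$ directly using the block structure of $f$ (extending the computations in those lemmas) or proceed by induction on the codimension $k$, with base case $k=0$ the projective space, each step passing from $X_{d_1,\dots,d_{k-1}}$ to $X_{d_1,\dots,d_k}$ (which combinatorially converts $d_k-1$ of the projective variables into a new degree-$d_k$ block) and tracking the corresponding sequence of ``cone'' blow-ups as in the proof of Lemma~\ref{lemma:quadric}. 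Combining the two regimes: apart from the central fiber $F_0$, the Landau--Ginzburg model $w$ has exactly $i_X$ singular fibers, each with a single ordinary double point.
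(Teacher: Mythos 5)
This statement is quoted from~\cite[Theorem 1.2]{Prz25} and is not proved in the present paper, so there is no internal proof to compare against; judging your argument on its own terms, it splits cleanly into a part that is correct and complete and a part that is only a plan. The interior analysis is fine: your computation of the critical points of Givental's potential on $(\CC^*)^n$ is correct (the conditions $d_jy_{j,l}=1+\sum_m y_{j,m}$ force $y_{j,l}=1$, the $x_i$ all equal an $i_X$-th root of $c=\prod d_j^{d_j}$, the $i_X$ critical values $i_Xx$ are distinct and nonzero, the locus $1+\sum_m y_{j,m}=0$ is correctly disposed of, and the block Hessian is nondegenerate). Since the torus is open in $Y$ and $w$ restricts to $f$ there, this really does produce $i_X$ distinct non-central fibers, each with an ordinary double point coming from the torus.

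The gap is the boundary analysis, which you yourself flag as ``the main obstacle'' and then replace by two candidate strategies rather than an argument. This is not a routine verification to be deferred: it is the actual content of the theorem. One must show that for every $\lambda\neq 0$ the member $\cL_\lambda$ has only singularities along the base locus that are resolved uniformly in $\lambda$ by the crepant resolution, so that no non-central fiber acquires an extra singular point and the $i_X$ distinguished fibers acquire no second one. The case-by-case lemmas in Section~\ref{section:LG} show how delicate this is even in dimension three: boundary du Val points jump type at special values of $\lambda$ and must be checked to resolve anyway (Lemmas~\ref{lemma: V22},~\ref{lemma:V14}); for $V_{18}$ one needs a small resolution of a node of the total space (Lemma~\ref{lemma:V18}); and for the quadric the boundary genuinely \emph{does} contribute an extra ordinary double point --- but over $\lambda=0$, i.e.\ to the central fiber (Lemma~\ref{lemma:quadric}), which is exactly the phenomenon your argument must rule out for $\lambda\neq 0$. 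Your interior computation alone therefore only gives that at least $i_X$ non-central fibers are singular with at least one ordinary double point each; the upper bounds (no further singular fibers, no further singular points in those fibers) require the base-locus analysis that you have sketched but not carried out.
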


\begin{corollary}
\label{corollary: fibers for WCI}
Let $X$ be a smooth Picard rank one Fano threefold. If $n=2,3,4$, then $\varepsilon(X)=1$;
if $n=13,14$, then $\varepsilon(X)=2$. 
\end{corollary}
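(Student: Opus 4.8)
The plan is to deduce the statement directly from Theorem~\ref{theorem:complete intersections} (equivalently, from~\cite[Theorem~1.2]{Prz25}) once the five families are identified and their Fano indices recorded. First I would observe that in the numbering of~\cite{IP99} used here the families $n=2,3,4$ are $V_4$, $V_6$, $V_8$ --- the quartic hypersurface in $\PP^4$, the complete intersection of a quadric and a cubic in $\PP^5$, and the complete intersection of three quadrics in $\PP^6$ --- each of which, by the notational convention fixed in the introduction, has Fano index $i_X=1$; and the families $n=13,14$ are $Y_3$, $Y_4$ --- the cubic hypersurface in $\PP^4$ and the complete intersection of two quadrics in $\PP^5$ --- each of Fano index $i_X=2$. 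In particular all five varieties are smooth Fano complete intersections in ordinary projective spaces; for $V_4$ I would use the quartic-hypersurface realization (rather than the double cover of a quadric), so that Givental's construction applies verbatim. Since $\dim X=3$, Givental's toric Landau--Ginzburg model of such an $X$ is standard, so its Calabi--Yau compactification is the Landau--Ginzburg model $(Y,w)$ figuring in Notation~\ref{notation:epsilon}; by Lemma~\ref{lemma: odp for all} the number $\varepsilon(X)$ is moreover insensitive to the choice of Calabi--Yau compactification within the mutation class.

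Next I would apply Theorem~\ref{theorem:complete intersections}: apart from the central fiber, $(Y,w)$ has exactly $i_X$ singular fibers, each carrying a single ordinary double point, and these $i_X$ points account for all the isolated singular points of fibers of $w$ --- indeed, for all five varieties $h^{1,2}(X)>0$, so by~\cite[Theorem~22]{Prz13} the central fiber is reducible with $h^{1,2}(X)+1\ge 2$ components, and its singular locus, being (at least) the union of the pairwise intersections of those components, is positive-dimensional rather than a collection of isolated points. Hence $\varepsilon(X)=i_X$, which gives $\varepsilon(V_4)=\varepsilon(V_6)=\varepsilon(V_8)=1$ and $\varepsilon(Y_3)=\varepsilon(Y_4)=2$, as claimed.

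The one place where care is needed --- and the only step beyond bookkeeping --- is the claim that the central fiber contributes no \emph{isolated} singular point to $\varepsilon(X)$ in addition to its (non-isolated) crossing locus. I would settle this case by case from the explicit description of the central fiber of a Calabi--Yau compactification of Givental's model (as in~\cite{PSh15} and~\cite{ChP25}), using that away from the base locus of the relevant pencil the compactification is biregular; alternatively one can note that the hypotheses of~\cite[Theorem~1.2]{Prz25} already segregate the central fiber from the fibers carrying isolated ordinary double points, so that the bound $\varepsilon(X)\le i_X$ is contained in that statement. Everything else reduces to reading off the five families and their indices from the classification and quoting the complete-intersection theorems above.
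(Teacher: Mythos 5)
Your proof is correct and follows the same route as the paper: the corollary appears immediately after, and is a direct consequence of, the quoted theorem on Calabi--Yau compactifications of Givental's toric Landau--Ginzburg models for Fano complete intersections, via exactly your identification of families $2,3,4$ with $V_4,V_6,V_8$ (index $1$) and families $13,14$ with $Y_3,Y_4$ (index $2$). Your extra verification that the central fiber contributes no \emph{isolated} singular point to $\varepsilon(X)$ is a sensible precaution which the paper leaves implicit.
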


To prove Theorem~\ref{theorem:ODP for LG}, one needs to compute $\varepsilon(X)$ case-by-case by hand.
Let us sketch the computations in the following series of lemmas. We use our usual notation in them.

\begin{lemma}
\label{lemma: V22}
Let $X=X_{10}$, that is $V_{22}$, and let $(Y,w)$ be a Landau--Ginzburg model for $X$. Then $(Y,w)$ has $4$ singular fibers, and all of them have a single ordinary double point.
Thus, $\varepsilon(X)=4$ and $(Y,w)$ is a Lefschetz pencil.
\end{lemma}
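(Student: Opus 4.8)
The plan is to follow the strategy recalled above the lemma. First I would fix a convenient standard toric Landau--Ginzburg model $f(x,y,z)$ for $V_{22}$ --- one of the Laurent polynomials realizing family~$10$ in \cite{Prz18} or \cite{ChP25} --- and, using the embedding $(\CC^*)^3\hookrightarrow\PP^3$, clear denominators to write the compactified family $\{f=\lambda\}$ as a pencil $\cL_\lambda$ of quartic surfaces in $\PP^3$. The Landau--Ginzburg model $(Y,w)$ is then obtained by resolving the base locus of $\cL_\lambda$ by a sequence of blow-ups of smooth curves (crepantly over the base), and $\cN_\lambda=w^{-1}(\lambda)$ is the strict transform of the member $\cL_\lambda$. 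Since $n=10\neq 1,11$, no preliminary birational change of variables is needed; and since $h^{1,2}(V_{22})=0$, there is no reducible or non-reduced (``central'') fiber, so every singular fiber has isolated singularities, and proving the lemma amounts to showing that exactly four fibers of $w$ are singular and that each carries a single ordinary double point.

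Next I would determine the singular points of the members of $\cL_\lambda$, exactly as in the proofs of Lemmas~\ref{lemma:P3} and~\ref{lemma:quadric}. These should fall into two kinds. On the one hand, four points lying off the base locus which are ordinary double points of $\cL_\lambda$ for four distinct values $\lambda_1,\dots,\lambda_4$ (the roots of an explicit degree-four polynomial in $\lambda$); off the base locus the resolution is biregular, so each $\lambda_i$ produces a fiber of $w$ with precisely one ordinary double point, and these four fibers are pairwise distinct. On the other hand, a finite set of points $P_1,\dots,P_r$ lying on the base locus (typically at coordinate points and at intersection points of the base curves) which are du Val for every $\lambda$ --- I expect types $\dvA_n$ and $\dvD_n$. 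For each such $P_j$ I would blow up the base curves through it explicitly and check that the strict transform of a generic quartic in the pencil becomes smooth near the exceptional locus and, crucially, that no special member of the pencil acquires a new isolated singular point over $P_j$. Combining the two analyses yields exactly four singular fibers, each a single ordinary double point, so $\varepsilon(X)=4$ and $(Y,w)$ is a Lefschetz pencil.

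The main obstacle is the local resolution analysis at the base-locus points $P_j$. There the base curves can be tangent to one another, can meet with higher multiplicity, or can be non-reduced in the pencil, and one must choose the correct order of crepant blow-ups and carefully track which exceptional curves are contracted upon passing to a fiber $\cN_\lambda$; a slip in this bookkeeping would spuriously create or destroy an isolated singularity and change the count. This is precisely the kind of case-by-case computation carried out for the quadric in Lemma~\ref{lemma:quadric}, where a point such as $P_3$ showed how a fixed base-locus singularity can behave subtly as $\lambda$ specializes; for $V_{22}$ one must rule out (or correctly account for) every such contribution. Everything else --- exhibiting the polynomial, clearing denominators, and solving for the four moving nodes --- is routine and entirely parallel to the proof of Lemma~\ref{lemma:quadric}.
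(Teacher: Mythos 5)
Your strategy is exactly the paper's: choose a standard toric Landau--Ginzburg model for $V_{22}$, compactify $\{f=\lambda\}$ to a pencil of quartics in $\PP^3$, locate the singular points of its members, and separate those off the base locus (which survive as ordinary double points of the fibers of $w$) from those on the base locus (which must be traced through the crepant resolution). However, what you have written is a plan rather than a proof: the entire content of the lemma \emph{is} the explicit computation --- the specific Laurent polynomial, the resulting quartic pencil, the list of singular points with their du Val types, and the local resolution at each base-locus point --- and none of this is carried out. Dismissing it as ``routine and entirely parallel to Lemma~\ref{lemma:quadric}'' does not discharge it, since the outcome of such an analysis is case-dependent and is precisely what the lemma asserts.

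More importantly, your anticipated structure of the answer is wrong for $V_{22}$, in a way that would derail the argument if followed literally. You expect four nodes off the base locus, given by the roots of a degree-four polynomial in $\lambda$, with every base-locus point resolving without contribution. In the paper's computation only \emph{three} of the four ordinary double points lie off the base locus: they are the points $[1:1:1:\alpha]$ over $\lambda=3\alpha^2+8\alpha+8$, where $\alpha$ runs over the three roots of the cubic $\alpha^3+2\alpha^2=2$. The fourth singular fiber arises from the base-locus point $P_4=[1:1:-1:0]$, a du Val point of type $\dvA_2$ on the member over $\lambda=0$: after blowing up the line $\{t=y+z=0\}$ (a component of the base locus) it becomes an ordinary double point lying \emph{off} the base locus, and hence persists in $w^{-1}(0)$. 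So the base locus genuinely contributes one of the four nodes, and a count that assumes otherwise would find only three. Your closing caveat that one must ``correctly account for every such contribution'' gestures at this, but the actual accounting --- which also has to show that the remaining base-locus singularities ($\dvA_3$/$\dvA_4$ at $P_1^i$, $\dvA_1$/$\dvA_2$ at $P_2^i$ and $P_3$, and the nodes $P_5^1,P_5^2$ over $\lambda=2$) all resolve without leaving isolated singularities in the fibers --- is the missing substance of the proof.
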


\begin{proof}
Let us take the toric Landau--Ginzburg model
$$
\frac{y}{z}+\frac{x}{z}+x+y+z+\frac{1}{x}+\frac{1}{y}+\frac{1}{z}+\frac{xy}{z}+\frac{1}{xy}+\frac{z}{x}+\frac{z}{y}
$$
from~\cite{Prz13}.
The family of its fibers can be compactified to the pencil of quartics given by
$$
xy^2t+x^2yt+x^2yz+zy^2z+xyz^2+yzt^2+xzt^2+xyt^2+x^2y^2+zt^3+yz^2t+xz^2t=\lambda xyzt, \ \ \lambda \in \CC.
$$

Consider $\alpha$ such that $\alpha^3+2\alpha^2=2$. Singularities of members of the pencil are
\begin{multline*}
P_1^1=[1:0:0:0],\ \ P_1^2=[0:1:0:0],\ \ P_2^1=[1:0:0:-1], \\ P_2^2=[0:1:0:-1], \ \ P_3=[0:0:1:0],
\end{multline*}
as well as three ordinary double points $[1:1:1:\alpha]$ over $\lambda=3\alpha^2+8\alpha+8$,
the point $P_4=[1:1:-1:0]$ over $\lambda=0$ of type $\dvA_2$ that lies in the base locus, the point $P_5=[1:1:0:-1]$ over $\lambda=1$
of type $\dvA_2$ that lies in the base locus,
and ordinary double points $P_5^1=[1:0:1:-1]$, $P_5^2=[0:1:1:-1]$ over $\lambda=2$ that lie in the base locus.

Let us consider the singular points one-by-one.
The points $P_1^1$ and $P_1^2$ are of type $\dvA_3$ if $\lambda\neq 1$, and of type $\dvA_4$ otherwise.
Thus, each of them resolves by two blow-ups for any $\lambda$.
Similarly, the points $P_2^1$ and $P_2^2$ are of type $\dvA_1$ if $\lambda\neq 2$, and of type $\dvA_2$ otherwise,
so each of them resolves by one blow-up for any $\lambda$.
The point $P_3$ after blowing the base curve $\{x=y+t=0\}$ up gives a singular point that is of type $\dvA_1$
for $\lambda \neq 1$ and of type $\dvA_2$ for $\lambda=1$; thus, it resolves after one more blowing up.
The point $P_4$ gives an ordinary double point after blowing up the line $\{t=y+z=0\}$ that is a component of the base locus;
this ordinary double point does not lie on the base locus after blowing up. Since $P_5^1$ and $P_5^2$ are ordinary double
points, they resolve after blowing up any component of the base locus passing through them.
Summarizing, we get the assertion of the lemma. 
\end{proof}

\begin{lemma}
\label{lemma:V5}
Let $X=X_{15}$, that is $Y_5$, and let $(Y,w)$ be a Landau--Ginzburg model for $X$.
Then $(Y,w)$ has four singular fibers, and each of them has a single ordinary double point.
Thus, $\varepsilon(X)=4$ and $(Y,w)$ is a Lefschetz pencil.
\end{lemma}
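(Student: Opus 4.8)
The plan is to proceed exactly as in Lemmas~\ref{lemma:P3}, \ref{lemma:quadric}, and~\ref{lemma: V22}. First I would fix an explicit standard toric Landau--Ginzburg model $f(x,y,z)$ for $Y_5$ --- for instance one of the Laurent polynomials attached to $Y_5$ in~\cite{Prz13} --- and, after a monomial change of variables if needed, arrange that under the embedding $(\CC^*)^3\hookrightarrow\PP^3=\Proj\CC[x,y,z,t]$ the family $\{f=\lambda\}$ compactifies to a pencil $\cL_\lambda$ of quartic surfaces. Since $Y_5$ has index $2$ and $h^{1,2}(Y_5)=0$, by the cited theorem of~\cite{Prz13} the Calabi--Yau compactification $(Y,w)$ has no reducible central fiber (this is also consistent with the last claim of the lemma, that $(Y,w)$ is a Lefschetz pencil), so it remains only to locate and classify the isolated singularities of the members of $\cL_\lambda$ and of their crepant resolution along $\mathrm{Bs}(\cL_\lambda)$.

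Next I would compute the singular locus of the generic member of $\cL_\lambda$ and of the special members. As in the previous lemmas, I expect the singular points to fall into two groups: a finite set of \emph{base} points lying on $\mathrm{Bs}(\cL_\lambda)$, which are du Val singularities (of types $\dvA_k$ or $\dvD_k$, possibly with the type jumping at special values of $\lambda$, as happens for $P_1^1,P_1^2$ in the proof of Lemma~\ref{lemma: V22}); and, for finitely many values of $\lambda$, one further ordinary double point lying off the base locus. A dimension count --- $\sum_p h^{p,p}(Y_5)=4$ while $i_{Y_5}=2$ --- together with Corollary~\ref{corrollary: coincidence} makes $4$ the expected number of such $\lambda$, each contributing exactly one ordinary double point off $\mathrm{Bs}(\cL_\lambda)$.

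The heart of the argument is then the local resolution bookkeeping near the base locus, following the recipe recalled just before Lemma~\ref{lemma:P3}: $(Y,w)$ is obtained from $\cL_\lambda$ by a sequence of blow-ups of the smooth curves comprising $\mathrm{Bs}(\cL_\lambda)$, so at each base point I would blow up the relevant base curve(s) and check in local coordinates whether the du Val point is resolved crepantly (leaving a smooth fiber there) or whether an ordinary double point off the new base locus is produced --- in the latter case verifying it is one of the four already counted. Combining the analysis of the base points with the four off-base ordinary double points yields $\varepsilon(Y_5)=4$; finally, checking that the four special values of $\lambda$ are distinct and that the base-locus resolution creates no additional singular point on those fibers shows each singular fiber carries a single ordinary double point, so $(Y,w)$ is a Lefschetz pencil.

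The main obstacle will be this case analysis of the base points: one must pin down the du Val type of each base point for every $\lambda$ (including the special values where the type can increase) and then carry the iterated blow-ups far enough to be certain that the resolution is biregular there, or else that any ordinary double point it produces is accounted for, and in particular that no spurious singular point appears on one of the four distinguished fibers. This is entirely explicit but delicate; once the polynomial and its pencil $\cL_\lambda$ are chosen, the remainder is routine.
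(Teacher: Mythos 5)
Your strategy is exactly the paper's: pick a toric Landau--Ginzburg model for $Y_5$ from~\cite{Prz13}, compactify $\{f=\lambda\}$ to a pencil of quartics in $\PP^3$, separate the singular points of the members into du Val points on the base locus and ordinary double points off it, and track the former through the crepant resolution of the base locus. But as written the proposal contains no proof: the entire content of the lemma \emph{is} the explicit computation you defer as ``routine,'' and the one place where you do try to pin down the answer --- the claim that $4$ is the ``expected'' number of off-base nodes because $\sum_p h^{p,p}(Y_5)=4$ and because of Corollary~\ref{corrollary: coincidence} --- cannot be used as an argument: that corollary is \emph{deduced from} Theorem~\ref{theorem:ODP for LG}, hence from this very lemma, so invoking it here is circular. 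The count of $4$ has to come out of the equations, not out of Hodge theory.

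Concretely, the paper takes $f=x+y+z+\tfrac1x+\tfrac1y+\tfrac1z+\tfrac1{xyz}$, whose fibers compactify to
$$
x^2yz+xy^2z+xyz^2+yzt^2+xzt^2+xyt^2+t^4=\lambda xyzt .
$$
The base-locus singularities are the three coordinate points $[1:0:0:0]$, $[0:1:0:0]$, $[0:0:1:0]$ (type $\dvA_3$ for \emph{every} $\lambda$) and the three points $[1:-1:0:0]$, $[1:0:-1:0]$, $[0:1:-1:0]$ (type $\dvA_1$ for every $\lambda$); since the du Val types never jump, the resolution of the base locus is equisingular in $\lambda$ there and contributes no singular fibers --- so this case is actually \emph{simpler} than Lemma~\ref{lemma: V22}, where the jumping of types at special $\lambda$ is the delicate point you rightly worry about. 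The off-base nodes are the points $[1:1:1:\alpha]$ with $\alpha^4+\alpha^2=1$, lying over $\lambda=4\alpha^2+6\alpha$; the quartic $\alpha^4+\alpha^2-1$ has four distinct roots giving four distinct values of $\lambda$, whence $\varepsilon(X)=4$ and the Lefschetz-pencil claim. Without exhibiting this (or equivalent) data and verifying the du Val types and the non-collision of the four critical values, the lemma is not proved.
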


\begin{proof}
Let us take the toric Landau--Ginzburg model
$$
x+y+z+\frac{1}{x}+\frac{1}{y}+\frac{1}{z}++\frac{1}{xyz}
$$
from~\cite{Prz13}.
The family of its fibers can be compactified to the pencil of quartics given by
$$
x^2yz+xy^2z+xyz^2+yzt^2+xzt^2+xyt^2+t^4=\lambda xyzt, \ \ \lambda \in \CC.
$$

Let $\alpha$ be such that $\alpha^4+\alpha^2=1$. Singularities of members of the pencil are
\begin{multline*}
P_1^1=[1:0:0:0],\ \ P_1^2=[0:1:0:0],\ \ P_1^3=[0:0:1:0],\\
P_2^1=[1:-1:0:0], \ \ P_2^2=[1:0:-1:0], \ \ P_2^3=[0:1:-1,0],
\end{multline*}
as well as four ordinary double points $[1:1:1:\alpha]$ over $\lambda=4\alpha^2+6\alpha$ that do not lie at the base locus.
One can see that for any $\lambda$ the points $P_1^i$ are du Val of type $\dvA_3$,
and $P_2^i$ are of type $\dvA_1$.
Blowing them up we get $\varepsilon(X)=4$.
\end{proof}

The rest varieties have non-trivial intermediate Jacobian, so to find $\varepsilon(X)$,
one needs to study non-central fibers only.

\begin{lemma}
\label{lemma:V2}
Let $X=X_1$, that is $V_2$, and let $(Y,w)$ be a Landau--Ginzburg model for $X$.
Then $(Y,w)$ has two singular fibers: one reducible fiber and one fiber having a single ordinary double point.
Thus, $\varepsilon(X)=1$.
\end{lemma}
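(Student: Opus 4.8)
The plan is to apply the method of the preceding lemmas (Lemmas~\ref{lemma:P3}, \ref{lemma:quadric}, \ref{lemma: V22}, \ref{lemma:V5}) in the form adapted to the exceptional cases $n=1,11$. Since $h^{1,2}(V_2)=52\neq 0$, the central fiber of $(Y,w)$ is reducible — it has $53$ components — by~\cite[Theorem 22]{Prz13}, so it is one of the two asserted singular fibers; by Lemma~\ref{lemma: odp for all} the remaining task, namely counting the isolated singular points of the non-central fibers and checking they are ordinary double points, may be carried out on any Calabi--Yau compactification of any standard toric Landau--Ginzburg model for $V_2$. First I would fix such a model $f(x,y,z)$ (for instance the one from~\cite{Prz13}); since $-K_{V_2}$ is not very ample there is no reflexive polynomial in its mutation class, so, exactly as for $Y_1$, one passes to a birational change of variables (see~\cite[proof of Theorem~18]{Prz18}) after which the family $\{f=\lambda\}$ is presented as a pencil $\cL_\lambda$ of quartic surfaces in $\PP^3$ with coordinates $[x:y:z:t]$. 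The compactification $w^{-1}(\lambda)=\cN_\lambda$ is then the crepant resolution of the base locus of $\cL_\lambda$, obtained by a sequence of blow-ups along smooth curves.

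Next I would locate the singular points of the members of $\cL_\lambda$ and split them into those lying on the base locus and those not. Over a point $p\notin\mathrm{Bs}\,\cL_\lambda$ the resolution is biregular, so such a $p$, if it is an ordinary double point of its member $\cL_{\lambda_0}$, contributes exactly $1$ to $\varepsilon(V_2)$ and sits in the single fiber $w^{-1}(\lambda_0)$; I expect the off-base-locus singularities to amount to one ordinary double point in one fiber (and no worse singularity), while the base-locus points will be shown to contribute nothing. For a point $p\in\mathrm{Bs}\,\cL_\lambda$ I would determine the du Val type of the singularity of $\cL_\lambda$ at $p$ (of the form $\dvA_k$ or $\dvD_k$, constant in $\lambda$ on a dense open set with possible jumps at finitely many values), and then blow up the components of the base locus through $p$ one at a time, recomputing the type in charts until it is resolved, verifying that the resulting threefold and its fibers are smooth in a neighbourhood of $p$. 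Assembling the two kinds of contributions then gives $\varepsilon(V_2)=1$ and the two singular fibers.

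The main obstacle is the local resolution analysis at the base-locus points. Unlike $\PP^3$ and the quadric, where the quartic pencil is immediately explicit, here $\cL_\lambda$ is reached only after a birational change of variables, so the base curves can be reducible or have higher-order contact, and tracking the du Val type through an iterated sequence of blow-ups of smooth curves requires careful chart-by-chart bookkeeping — including a check that the birational change of variables itself introduces no spurious fiber singularities away from the central fiber. Once the right charts are set up, this is a finite verification of exactly the same nature as in the proofs of Lemmas~\ref{lemma:quadric} and~\ref{lemma: V22}.
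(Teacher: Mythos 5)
Your strategy is exactly the one the paper follows: pass to the pencil of quartics via the birational change of variables of~\cite[proof of Theorem 18]{Prz13} (the paper cites Prz13, not Prz18, for this step in the $V_2$ case), separate the singular points of the members of $\cL_\lambda$ into those on and off the base locus, resolve the former and count the latter. However, your write-up stops at the level of a plan: the entire mathematical content of the lemma is the explicit computation you defer, namely the pencil
$$
x^4=\lambda\, yz(xt-xy-xz-t^2),
$$
the identification of its singular points $P_1=[0:0:0:1]$, $P_2^1=[0:1:0:0]$, $P_2^2=[0:0:1:0]$, $P_3=[0:1:-1:0]$ together with the single off-base-locus ordinary double point $[12:1:1:6]$ over $\lambda=1728$, and the verification that for $\lambda\neq 0$ the base-locus points are du Val of types $\dvA_3$, $\dvA_6$, $\dvA_6$, $\dvA_1$ respectively. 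Without producing these data and checking the types, ``I expect the off-base-locus singularities to amount to one ordinary double point'' is an assumption of the conclusion, not a proof. One consolation: the difficulty you anticipate (du Val types jumping at special values of $\lambda$, forcing chart-by-chart tracking through iterated blow-ups as in Lemmas~\ref{lemma: V22} and~\ref{lemma:quadric}) does not actually arise here --- in this case all four base-locus singularities have $\lambda$-independent type for $\lambda\neq 0$, so they are resolved uniformly by the resolution of the base locus and contribute nothing, and the count $\varepsilon(V_2)=1$ follows at once.
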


\begin{proof}
A family given by a standard toric Landau--Ginzburg model is birationally
(over the base) equivalent to the pencil of quartics
$$
x^4=\lambda yz(xt-xy-xz-t^2), \ \ \lambda \in \CC,
$$
see~\cite[proof of Theorem 18]{Prz13}.
Singularities of members of the pencil are
\begin{multline*}
P_1=[0:0:0:1],\ \ P_2^1=[0:1:0:0],\ \ P_2^2=[0:0:1:0], \ \
P_3=[0:1:-1:0],
\end{multline*}
the cental fiber (over $\lambda=0$), and the ordinary double point $[12:1:1:6]$ over $\lambda=1728$ that do not lie at the base locus.
One can see that for any $\lambda\neq 0$ the point $P_1$ is du Val of type $\dvA_3$,
the points $P_2^i$ are of type $\dvA_6$, and the point $P_3$ is of type $\dvA_1$.
This gives the assertion of the lemma.
\end{proof}

\begin{lemma}
\label{lemma:V10}
Let $X=X_5$, that is $V_{10}$, and let $(Y,w)$ be a Landau--Ginzburg model for $X$.
Then $(Y,w)$ has three singular fibers: one reducible fiber and two fibers having a single ordinary double point.
\end{lemma}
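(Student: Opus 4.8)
The plan is to follow the same strategy as in the proofs of Lemmas~\ref{lemma: V22}, \ref{lemma:V5}, and~\ref{lemma:V2}: pick an explicit standard toric Landau--Ginzburg model $f(x,y,z)$ for $V_{10}$, compactify the family $\{f=\lambda\}$, $\lambda\in\CC$, to a pencil $\cL_\lambda$ of quartic surfaces in $\PP^3$ (clearing denominators and homogenizing with the extra coordinate $t$), and then analyze the singular points of the members of the pencil. Since $h^{1,2}(V_{10})=10\neq 0$, the Calabi--Yau compactification $(Y,w)$ has a unique reducible central fiber, so by Lemma~\ref{lemma: odp for all} and the discussion preceding Lemma~\ref{lemma:P3} it is enough to understand the non-central fibers and to show that exactly two of them acquire a single ordinary double point, while no fiber acquires more.

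First I would locate a suitable Laurent polynomial; a convenient choice is the one appearing in~\cite{Prz13} for the family $V_{10}$, whose Newton polytope is a fan polytope of a toric degeneration of $V_{10}$. Homogenizing gives a pencil of quartics, and the first computational step is to list (i) the singular points of a general member of $\cL_\lambda$, which are necessarily contained in the base locus $\mathrm{Bs}\,\cL_\lambda$, and (ii) the isolated singular points that occur only for special values of $\lambda$ and do \emph{not} lie on the base locus. The latter are automatically ordinary double points of the corresponding fiber $\cN_\lambda=w^{-1}(\lambda)$, because the crepant resolution of the base locus is biregular away from $\mathrm{Bs}\,\cL_\lambda$; I expect to find exactly two such values of $\lambda$, each contributing one ordinary double point.

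Next I would treat the singular points lying on the base locus. For each of them one computes its du Val type on the relevant members of the pencil (these will be of type $\dvA_n$ or $\dvD_n$, as in the earlier lemmas), and then checks, by an explicit sequence of blow-ups of the components of $\mathrm{Bs}\,\cL_\lambda$ through that point, whether the resulting fibers of $\cN_\lambda$ are smooth there or acquire an ordinary double point. The expected outcome is that every base-locus singularity either disappears after the resolution (contributing nothing to $\varepsilon(X)$) or is absorbed into the central fiber; combined with the two ordinary double points found away from the base locus, this gives $\varepsilon(V_{10})=2$ together with the stated description of the singular fibers: one reducible central fiber and two fibers each with a single ordinary double point.

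The main obstacle is the bookkeeping in the last step: the base locus of the quartic pencil is typically a union of several curves meeting at the points in question, and one must resolve it carefully, tracking how each infinitely near point sits with respect to the strict transform of a general fiber, in order to decide whether a genuine singularity of the total space (hence of a fiber of $w$) is created. This is precisely the place where the proofs of Lemmas~\ref{lemma: V22} and~\ref{lemma:V2} required case-by-case blow-up computations, and $V_{10}$ is no exception; once the local resolution picture at each base point is settled, the count of ordinary-double-point fibers follows immediately.
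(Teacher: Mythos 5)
Your plan coincides with the paper's proof: it takes the Laurent polynomial $\frac{(1+x+y+z+xy+xz+yz)^2}{xyz}$ from~\cite{Prz13}, compactifies to a pencil of quartics, finds two ordinary double points $[1:1:1:\alpha]$ (with $\alpha^2+\alpha=1$) away from the base locus, and checks that the base-locus singularities $P_1^i$, $P_2^i$ are du Val of types $\dvA_1$ and $\dvA_2$ for $\lambda\neq 0$, hence contribute nothing after resolution. Your outline is correct and follows the same route; the only thing missing is the explicit execution of this computation, which is the actual content of the lemma.
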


\begin{proof}
Let us take the toric Landau--Ginzburg model
$$
\frac{(1+x+y+z+xy+xz+yz)^2}{xyz}
$$
from~\cite{Prz13}.
The family of its fibers can be compactified to the pencil of quartics given by
$$
(t^4+xt^3+yt^3+zt^3+xyt^2+xzt^2+yzt^2)^2=\lambda xyzt, \ \ \lambda \in \CC.
$$
Let $\alpha^2+\alpha=1$. Singularities of members of the pencil are
\begin{multline*}
P_1^1=[1:-1:0:0],\ \ P_1^2=[1:0:-1:0],\ \ P_1^3=[1:0:0:-1],\ \ P_1^4=[0:1:-1:0], \\
P_2^1=[1:0:0:0],\ \ P_2^2=[0:1:0:0],\ \ P_2^3=[0:0:1:0],\ \ P_2^4=[0:0:0:1],
\end{multline*}
the cental fiber (over $\lambda=0$), and two ordinary double points $[1:1:1:\alpha]$ over $\lambda=20\alpha+32$ that do not lie at the base locus.
One can see that for any $\lambda\neq 0$ the points $P_1^i$ are du Val of type $\dvA_1$,
and the points $P_2^i$ are of type $\dvA_2$.
This gives the assertion of the lemma.
\end{proof}

\begin{lemma}
\label{lemma:V12}
Let $X=X_6$, that is $V_{12}$, and let $(Y,w)$ be a Landau--Ginzburg model for $X$.
Then $(Y,w)$ has three singular fibers: one reducible fiber and two fibers having a single ordinary double point.
Thus, $\varepsilon(X)=2$.
\end{lemma}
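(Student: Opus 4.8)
The plan is to repeat the strategy of Lemmas~\ref{lemma: V22}, \ref{lemma:V5}, and~\ref{lemma:V10}. By Lemma~\ref{lemma: odp for all} it suffices to work with one convenient Calabi--Yau compactification, so I would start from an explicit standard toric Landau--Ginzburg model $f(x,y,z)$ for $V_{12}$ taken from~\cite{Prz13} and, after the birational change of variables described above, present the pencil $\{f=\lambda\}$ as a pencil $\cL_\lambda$ of quartic surfaces in $\PP^3$, with Calabi--Yau compactification the crepant resolution $\cN_\lambda$ of the base locus of $\cL_\lambda$. Since $h^{1,2}(V_{12})=7\neq 0$, exactly one fiber of $w$ --- the one over $\lambda=0$ --- is the reducible central fiber (with $8$ components), and it carries no isolated singularities; so it remains to analyse the non-central fibers and to show that precisely two of them acquire a single ordinary double point.

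Next I would list all singular points of the members of $\cL_\lambda$ and split them into those off the base locus and those on it. Off the base locus I expect finitely many ordinary double points, lying over the two roots $\lambda$ of an explicit equation in an auxiliary parameter $\alpha$ (of the same flavour as $\alpha^4+\alpha^2=1$ for $Y_5$ or $\alpha^2+\alpha=1$ for $V_{10}$); because $\cN_\lambda\to\cL_\lambda$ is biregular near such points, these survive as ordinary double points of fibers of $w$. For each singular point $P$ lying on the base locus I would determine the du Val type of the surface singularity at $P$ for general $\lambda$ (types $\dvA_n$, and possibly $\dvD_n$), allowing the type to jump at finitely many special values of $\lambda$, and then verify that the blow-ups of the smooth base curves through $P$ resolve this singularity and leave a smooth K3 fiber in a neighbourhood of $P$, introducing no new singular point of $w$.

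Assembling the two analyses yields that the singular fibers of $w$ are exactly the reducible central fiber over $\lambda=0$ together with two fibers each having a single ordinary double point, hence $\varepsilon(V_{12})=2$.

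The main obstacle is the treatment of the on-base-locus points in the third step. One must be sure that the particular sequence of blow-ups resolving the base locus of $\cL_\lambda$ really resolves every such fiber singularity for every non-central $\lambda$ without ever leaving a singular point on a fiber: the base curves need not meet the singularities transversally, and --- as the points $P_3,P_4^1,P_4^2$ in the proof of Lemma~\ref{lemma:quadric} show --- a naive blow-up can leave behind an $\dvA_1$ point or can split a colliding configuration into several pieces over a single value of $\lambda$. Checking that in the present case every on-base-locus singularity is in fact resolved crepantly into smooth K3 fibers, with no residual ordinary double point, is the crux of the computation.
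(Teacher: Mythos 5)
Your outline reproduces the paper's strategy exactly --- pick a standard toric Landau--Ginzburg model for $V_{12}$ from~\cite{Prz13}, compactify the pencil $\{f=\lambda\}$ to a pencil of quartics in $\PP^3$, separate the singular points of the members into those off the base locus (which survive as ordinary double points of the fibers of $w$) and those on it (which must be checked to resolve under the blow-ups of the base curves), and invoke the structure of the central fiber over $\lambda=0$ to account for the reducible fiber. So there is no divergence of method.

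However, as written this is a plan rather than a proof: the entire content of the lemma is the explicit computation, and none of it is carried out. You do not specify which Laurent polynomial to use (the paper takes $\frac{(x+z+1)(x+y+z+1)(z+1)(y+z)}{xyz}$, compactifying to $(x+z+t)(x+y+z+t)(z+t)(y+z)=\lambda xyzt$), you do not exhibit the two ordinary double points off the base locus (they are $[2\alpha:\alpha+1:\alpha-1:2]$ over $\lambda=12\alpha+5$ with $\alpha^2+2\alpha=1$) but merely ``expect'' them to exist over two values of $\lambda$ --- which is assuming the conclusion, since a priori there could be one, three, or a fiber with two nodes --- and you do not list the seven base-locus singularities $P_1,\dots,P_7$ or determine their du Val types ($\dvA_2,\dvA_1,\dvA_1$ or $\dvA_2$ jumping at $\lambda=1$, $\dvA_3$, $\dvA_1$, jumping type again, $\dvA_2$) and verify that each resolves crepantly for all $\lambda\neq 0$ without leaving a residual node. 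You correctly identify this last verification as ``the crux of the computation,'' but identifying the crux is not the same as performing it; until the specific pencil is written down and its singular points classified, the claim $\varepsilon(V_{12})=2$ is not established.
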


\begin{proof}
Let us take the toric Landau--Ginzburg model
$$
\frac{(x+z+1)(x+y+z+1)(z+1)(y+z)}{xyz}
$$
from~\cite{Prz13}.
The family of its fibers can be compactified to the pencil of quartics given by
$$
(x+z+t)(x+y+z+t)(z+t)(y+z)=\lambda xyzt, \ \ \lambda \in \CC.
$$
Consider $\alpha$ such that $\alpha^2+2\alpha=1$. Singularities of members of the pencil are
\begin{multline*}
P_1=[1:0:0:0],\ \ P_2=[0:1:0:0],\ \ P_3=[1:-1:0:0],\ \ P_4=[1:0:0:-1], \\
P_5=[1:0:-1:0],\ \ P_6=[0:1:-1:0],\ \ P_7=[0:0:1:-1],
\end{multline*}
the cental fiber (over $\lambda=0$), and two ordinary double points $[2\alpha:\alpha+1:\alpha-1:2]$ over
$\lambda=12\alpha+5    
$ that do not lie at the base locus.
One can see that for any $\lambda\neq 0$ the points $P_1,P_2,P_4,P_5,P_7$ are du Val of types $\dvA_2,\dvA_1,\dvA_3,\dvA_1,\dvA_2$,
correspondingly.
The points $P_3,P_6$ are du Val of type $\dvA_1$ for $\lambda\neq 0,1$ and of type $\dvA_2$ for $\lambda=1$;
thus, they resolve by one blowing up for all $\lambda\neq 0$.
This gives the assertion of the lemma.
\end{proof}

\begin{lemma}
\label{lemma:V14}
Let $X=X_7$, that is $V_{14}$, and let $(Y,w)$ be a Landau--Ginzburg model for $X$.
Then $(Y,w)$ has three singular fibers: one reducible fiber and two fibers having a single ordinary double point.
Thus, $\varepsilon(X)=2$.
\end{lemma}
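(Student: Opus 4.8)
The plan is to follow the same strategy as in Lemmas~\ref{lemma: V22}--\ref{lemma:V12}. First I would fix an explicit standard toric Landau--Ginzburg model $f(x,y,z)$ for $V_{14}$ taken from~\cite{Prz13}; since $-K_X$ is very ample here, $f$ can be chosen so that under the embedding $(\CC^*)^3\hookrightarrow \PP^3$ of the discussion in~\cite{ChP25} the family $\{f=\lambda\}$ compactifies to a pencil $\cL_\lambda$ of quartic surfaces in $\PP^3$, and the Calabi--Yau compactification $(Y,w)$ is obtained as a crepant resolution $\cN_\lambda$ of the base locus of $\cL_\lambda$. By Lemma~\ref{lemma: odp for all} the count $\varepsilon(X)$ does not depend on the particular compactification, so it suffices to analyze this one pencil.

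Next I would clear denominators to write $\cL_\lambda$ as an explicit one-parameter family of quartics $G(x,y,z,t)=\lambda\,xyzt$, and list all singular points of its members. As in the earlier cases these split into two groups: a handful of points $P_i$ lying on the base locus $\{G=xyzt=0\}$ (typically coordinate points and points on coordinate lines, each being du Val of some type $\dvA_n$ or $\dvD_n$, possibly with the type jumping for one special value of $\lambda$), together with finitely many isolated singular points occurring over isolated values of $\lambda$ and \emph{not} lying on the base locus. Using that $h^{1,2}(V_{14})=5\neq 0$ and the theorem of~\cite{Prz13} quoted above, one of the fibers (over $\lambda=0$) is the reducible central fiber; I expect the off-base singular points to be exactly two ordinary double points, lying over two distinct values of $\lambda$, giving $\varepsilon(X)=2$.

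The core of the argument — and the step I expect to be the main obstacle — is the local analysis at the base-locus points $P_i$: for each one I would blow up the component(s) of the base locus passing through it and check that, after the crepant resolution, the fiber $\cN_\lambda$ becomes smooth near $P_i$ for \emph{every} $\lambda\neq 0$ (equivalently, that the du Val singularity along the base locus is absorbed by the resolution of the base curve and produces no new singular fiber). One must be attentive to the values of $\lambda$ where the singularity type of some $P_i$ degenerates, or where two such points collide, exactly as in Lemma~\ref{lemma:quadric}; in those situations a single blow-up may yield an ordinary double point lying off the base locus, and one has to verify it was already accounted for among the two ODPs above rather than contributing an extra one. This is a finite, explicit but delicate computation in local coordinates.

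Finally, assembling the pieces: the non-central singular fibers of $w$ are precisely the two fibers containing the ordinary double points found off the base locus, and the central fiber over $\lambda=0$ is the reducible one. Hence $(Y,w)$ has three singular fibers, one reducible and two each with a single ordinary double point, so $\varepsilon(X)=2$, proving the lemma. (This also fits the pattern of Theorem~\ref{theorem:ODP for LG} and is consistent with $\omega_\cO(V_{14})\ge 2$ from Proposition~\ref{proposition:index one two objects}.)
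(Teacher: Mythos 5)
Your outline correctly identifies the method the paper uses (and that is used throughout Section~\ref{section:LG}): choose a toric Landau--Ginzburg model from~\cite{Prz13}, compactify the fibers to a pencil of quartics in $\PP^3$, list the singular points of the members of the pencil, and separate those lying on the base locus (to be analyzed through the crepant resolution) from the isolated ordinary double points off the base locus. But as written this is a plan, not a proof: the entire mathematical content of the lemma is the explicit computation, and you have not carried out any of it. You do not exhibit the Laurent polynomial, the pencil, the list of singular points, their du Val types, or the local resolution at the base-locus points --- precisely the step you yourself flag as ``the main obstacle.'' Asserting that you ``expect'' the answer to be two ordinary double points is not an argument; the expected count could only be confirmed (or refuted) by doing the computation.

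Moreover, your guess about \emph{where} the two ordinary double points come from is wrong for $V_{14}$, which illustrates why the computation cannot be skipped. In the paper's proof, with the model
$f=\frac{(x+y+z+1)^2}{x} + \frac{(x+y+z+1)(y+z+1)(z+1)^2}{xyz}$
compactifying to $(x+y+z+t)^2yz + (x+y+z+t)(y+z+t)(z+t)^2=\lambda xyzt$, only \emph{one} ordinary double point (namely $[12:3:1:2]$ over $\lambda=27$) lies off the base locus. The second singular fiber arises from the base-locus point $P_6=[0:1:-1:0]$ on the curve $\{x=y+z+t=0\}$: after blowing up that curve the residual singularity is of type $\dvA_1$ for $\lambda\neq 0,-1$ but of type $\dvA_3$ for $\lambda=-1$, and one further blow-up leaves a single ordinary double point in the fiber over $\lambda=-1$. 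So the second ODP is \emph{produced} by the resolution of the base locus, not merely ``accounted for'' among points already found off it; your hedge in the third paragraph does not recover this, since without the explicit local analysis there is nothing to check it against. To turn your outline into a proof you would need to supply the full case-by-case analysis of all six base-locus points (types $\dvA_1,\dvA_3,\dvA_2,\dvA_2,\dvA_3$ for $P_1,\dots,P_5$, plus the jumping behavior at $P_6$) exactly as the paper does.
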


\begin{proof}
Let us take the toric Landau--Ginzburg model
$$
f=\frac{(x+y+z+1)^2}{x} + \frac{(x + y + z + 1)(y + z + 1)(z + 1)^2}{xyz}
$$
from~\cite{Prz13}.
The family of its fibers can be compactified to the pencil of quartics given by
$$
(x+y+z+t)^2yz + (x+y+z+t)(y+z+t)(z+t)^2=\lambda xyzt, \ \ \lambda \in \CC.
$$
The singularities of members of the pencil are
\begin{multline*}
P_1=[0:1:0:-1],\ \ P_2=[0:0:1:-1],\ \ P_3=[\lambda:0:-1:1], \\ P_4=[1:0:0:0], \ \
P_5=[1:-1:0:0],\ \ P_6=[0:1:-1:0],
\end{multline*}
the cental fiber (over $\lambda=0$), and the ordinary double point $[12:3:1:2]$ over
$\lambda=27$ that do not lie at the base locus.
One can see that for any $\lambda\neq 0$ the points $P_1, P_2, P_3,P_4,P_5$ are du Val of types $\dvA_1,\dvA_3,\dvA_2,\dvA_2,\dvA_3$,
correspondingly.
The point $P_6$ lies on the base curve given by $x=y+z+t=0$. After blowing this curve up
the point has type $\dvA_1$ for $\lambda\neq 0,-1$ and of type $\dvA_3$ for $\lambda=-1$.
Blowing it up (once) one gets a singular fiber over $\lambda=-1$ with a single ordinary double point.
This gives the assertion of the lemma.
\end{proof}

\begin{lemma}
\label{lemma:V16}
Let $X=X_8$, that is $V_{16}$, and let $(Y,w)$ be a Landau--Ginzburg model for $X$.
Then $(Y,w)$ has three singular fibers: one reducible fiber and two fibers having a single ordinary double point.
Thus, $\varepsilon(X)=2$.
\end{lemma}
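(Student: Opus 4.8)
The plan is to follow the same strategy as in Lemmas~\ref{lemma:V10}--\ref{lemma:V14}. First I would fix an explicit standard toric Landau--Ginzburg model $f$ for $V_{16}$ taken from~\cite{Prz13} (a Laurent polynomial in $x,y,z$ obtained from the Givental/Grassmannian-section presentation of $V_{16}$), and compactify the family $\{f=\lambda\}\subset(\CC^*)^3$ to a pencil $\cL_\lambda$ of quartic surfaces in $\PP^3=\Proj\CC[x,y,z,t]$ by homogenizing. The Calabi--Yau compactification $(Y,w)$ is then a crepant resolution of the base locus of $\cL_\lambda$, so the singular fibers of $w$ sit over the $\lambda$ for which a member of $\cL_\lambda$ acquires a singularity, and one only has to understand those singular points.

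Next I would list the singular points of the members of the pencil. They split into three kinds: (i) the central fiber over $\lambda=0$, which is reducible since $h^{1,2}(V_{16})=3\neq0$ (this accounts for the one reducible fiber); (ii) some isolated ordinary double points $[a:b:c:d]$ occurring over finitely many values $\lambda=\lambda_j$ and \emph{not} lying on the base locus of $\cL_\lambda$ --- in the neighborhood of these the resolution is biregular, so they survive as ordinary double points of $w$, and I expect to find exactly two such values of $\lambda$ (each carrying a single node); (iii) a collection of points $P_i$ lying on the base locus, which are du Val singularities of the generic member of $\cL_\lambda$. For the points of type (iii) I would compute, for each $P_i$ and as a function of $\lambda$, the du Val type (generically $\dvA_k$, possibly $\dvD_k$, with the type possibly jumping for a few special $\lambda$), and then check that after the sequence of blow-ups of the components of the base locus through $P_i$ the fibers become smooth near $P_i$ for \emph{every} $\lambda$, so that no additional singular fiber is produced there. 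Combining (i)--(iii) gives three singular fibers in total --- one reducible, two with a single ordinary double point --- hence $\varepsilon(X)=2$.

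The main obstacle, exactly as in the proof of Lemma~\ref{lemma:V14}, is the resolution bookkeeping at the base-locus points of kind (iii): one must verify that although the du Val type of some $P_i$ jumps at a special value $\lambda=\lambda_0$ (e.g.\ from $\dvA_1$ to $\dvA_3$), the extra blow-ups needed to resolve the base locus already take care of this worse singularity without leaving behind a node in the fiber over $\lambda_0$ --- i.e.\ that the ``crepant-over-the-base'' resolution of the base curve passing through $P_i$ simultaneously resolves the fiberwise singularity. This requires a local analytic computation (passing to an affine chart, blowing up the relevant smooth base curve, and inspecting the strict transform of the fiber) for each such point; it is routine but must be done carefully for every $P_i$ whose type is not constant in $\lambda$. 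Everything else --- identifying the base locus, locating the two off-base nodes and the corresponding $\lambda_j$, and recognizing that the fiber over $\lambda=0$ is the central reducible one --- is a direct calculation with the chosen Laurent polynomial.
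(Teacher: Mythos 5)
Your strategy is exactly the one the paper uses for this lemma (and for Lemmas~\ref{lemma:V10}--\ref{lemma:V14}): pick a standard toric Landau--Ginzburg model from~\cite{Prz13}, compactify to a pencil of quartics, separate the singularities into the reducible central fiber over $\lambda=0$, nodes off the base locus, and du Val points on the base locus whose type may jump at special $\lambda$, and check that the latter are absorbed by the resolution of the base locus. So there is no disagreement of method.

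However, as written this is a plan rather than a proof: every item that actually establishes the statement is deferred. You do not exhibit the Laurent polynomial, the pencil, the base locus, the singular points with their du Val types, or the two special values of $\lambda$; you only ``expect to find exactly two'' nodal fibers, which is precisely what has to be computed. For comparison, the paper takes
$$
\frac{(x+y+z+1)(x+1)(y+1)(z+1)}{xyz},
$$
compactifying to the pencil $(x+y+z+t)(x+t)(y+t)(z+t)=\lambda xyzt$. Its base-locus singularities are the three coordinate points $P_1^i$ (type $\dvA_2$ for all $\lambda\neq 0$), the three points of the form $[1:-1:0:0]$ (type $\dvA_1$), and the three points of the form $[1:0:0:-1]$, whose type is $\dvA_1$ for $\lambda\neq 0,1$ and jumps to $\dvA_2$ at $\lambda=1$ but is still resolved by a single blow-up, so no extra singular fiber appears over $\lambda=1$. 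The two ordinary double points are $[1:1:1:\alpha]$ with $\alpha^2+2\alpha=1$, lying over $\lambda=8\alpha+20$ and off the base locus. Until you produce this data (or an equivalent set for another mutation-equivalent model) and carry out the jump analysis at $\lambda=1$, the lemma is not proved; note also that the count of two nodal fibers is not forced by any general principle here (contrast the index-one complete intersections, where Theorem~\ref{theorem:complete intersections} gives $i_X$ nodal fibers), so the explicit computation is genuinely the content of the statement.
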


\begin{proof}
Let us take the toric Landau--Ginzburg model
$$
\frac{(x+y+z+1)(x+1)(y+1)(z+1)}{xyz}
$$
from~\cite{Prz13}.
The family of its fibers can be compactified to the pencil of quartics given by
$$
(x+y+z+t)(x+t)(y+t)(z+t)=\lambda xyzt, \ \ \lambda \in \CC.
$$
Consider $\alpha$ such that $\alpha^2+2\alpha=1$. Singularities of members of the pencil are
\begin{multline*}
P_1^1=[1:0:0:0],\ \ P_1^2=[0:1:0:0],\ \ P_1^3=[0:0:1:0],\\
P_2^1=[1:-1:0:0], \ \ P_2^2=[1:0:-1:0],\ \ P_2^3=[0:1:-1:0],\\
P_3^1=[1:0:0:-1], \ \ P_3^2=[0:1:0:-1],\ \ P_3^3=[0:0:1:-1],
\end{multline*}
the cental fiber (over $\lambda=0$), and two ordinary double points $[1:1:1:\alpha]$ over
$\lambda=8\alpha+20     
$ that do not lie at the base locus.
One can see that for any $\lambda\neq 0$ the points $P_1^i$ are du Val of type $\dvA_2$,
and  the points $P_2^i$ are du Val of type $\dvA_1$.
The points $P_3^i$ are of type $\dvA_1$ for $\lambda\neq 0,1$ and of type $\dvA_2$ for $\lambda=1$;
thus, they resolve by one blowing up for all $\lambda\neq 0$.
This gives the assertion of the lemma.
\end{proof}

\begin{lemma}
\label{lemma:V18}
Let $X=X_9$, that is $V_{18}$, and let $(Y,w)$ be a Landau--Ginzburg model for $X$.
Then $(Y,w)$ has three singular fibers: one reducible fiber and two fibers having a single ordinary double point.
Thus, $\varepsilon(X)=2$.
\end{lemma}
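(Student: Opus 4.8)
The plan is to proceed exactly as in the proofs of Lemmas~\ref{lemma:V10}--\ref{lemma:V16}. First I would fix the standard toric Landau--Ginzburg model $f(x,y,z)$ for $V_{18}$ from~\cite{Prz13}; since $V_{18}$ is the family number $9$, one may take $f$ so that, under the embedding $(\CC^*)^3\hookrightarrow\PP^3$, the family $\{f=\lambda\}$ compactifies directly to a pencil $\cL_\lambda$ of quartic surfaces of the shape $\{g(x,y,z,t)=\lambda xyzt\}$, with no preliminary birational change of variables needed. By Lemma~\ref{lemma: odp for all} (and the discussion following it), $\varepsilon(X)$ can be computed from this model by taking a crepant resolution $\cN_\lambda$ of the base locus of $\cL_\lambda$ and counting the non-central fibers carrying ordinary double points.

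Next I would compute the singular locus of the members of $\cL_\lambda$ from the Jacobian ideal. These singular points split into three groups: the isolated singularities of the member over $\lambda=0$, whose strict transforms under the resolution assemble into the reducible central fiber --- which has $h^{1,2}(V_{18})+1=3$ components, in agreement with~\cite[Theorem 22]{Prz13}; a finite collection of points lying on the base locus $\{g=xyzt=0\}$, which for the generic fiber are du Val of types $\dvA_k$, the type possibly jumping up at finitely many special values of $\lambda$; and possibly some ordinary double points lying off the base locus, occurring over finitely many $\lambda$. For each point on the base locus I would run the explicit resolution --- successive blow-ups of the smooth curves forming the base locus --- and check, component by component and at each intersection point, that the resolution is biregular over the generic fiber; where a du Val type jumps, say from $\dvA_k$ to $\dvA_{k+1}$ over a special $\lambda$, I would determine (as happened with $P_3$ in Lemma~\ref{lemma:quadric} and $P_6$ in Lemma~\ref{lemma:V14}) whether the extra blow-up needed over that $\lambda$ leaves a single ordinary double point in the corresponding fiber. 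Finally, an ordinary double point of $\cL_\lambda$ not on the base locus is untouched by the resolution and yields a singular fiber with a single ODP. Tallying the non-central singular fibers should give exactly two, each with a single ordinary double point, hence $\varepsilon(X)=2$.

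The main obstacle is the same as in the earlier lemmas: controlling the resolution of the base locus near the du Val points of the second group. The subtlety, illustrated by $P_6$ in the $V_{14}$ case, is that one of the two ordinary double points of $w$ need not be visible as an ordinary double point of the pencil $\cL_\lambda$ at all, but may appear only after blowing up a base curve through a du Val point whose type jumps; conversely, one must certify that no \emph{further} ordinary double points are created in non-central fibers beyond the expected two. This forces one to choose the toric Landau--Ginzburg model for $V_{18}$ with a base locus simple enough to make the local analysis tractable, and then to track the du Val types along each component of the base locus as $\lambda$ varies. The bookkeeping is organized most conveniently via the framework of~\cite{ChP25}.
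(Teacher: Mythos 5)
Your outline reproduces the paper's strategy exactly (pick the toric Landau--Ginzburg model for $V_{18}$ from~\cite{Prz13}, compactify to a quartic pencil $\{g=\lambda xyzt\}$, locate the singular points, sort them into du Val points on the base locus, type jumps at special $\lambda$, and ordinary double points off the base locus, then resolve and count). But as written it is a plan, not a proof: the entire mathematical content of the lemma is the explicit computation, and none of it is carried out. You do not specify the Laurent polynomial, the resulting pencil, the list of singular points, their du Val types, or the values of $\lambda$ over which ordinary double points occur, so the conclusion ``tallying should give exactly two'' is not established.

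The one place where the unexecuted step genuinely matters is the subtlety you yourself flag. In the paper's computation the two ordinary double points both come from points $[1:1:1:\alpha]$, $\alpha=\pm 1/\sqrt{3}$, lying \emph{off} the base locus, over $\lambda=18\alpha+9$; the jump in type of the coordinate points $P_1^i$ (from $\dvA_2$ to $\dvA_3$ at $\lambda=-1$) does \emph{not} contribute an extra singular fiber, in contrast with the $P_6$ point in the $V_{14}$ case. Showing this requires a specific argument: these $P_1^i$ sit on the intersection of two components of the base locus, and the paper verifies that after resolving the base locus no singularity survives over them for $\lambda\neq 0$ --- alternatively, by viewing the pencil as a hypersurface in $\PP^3\times\Aff^1$, blowing up $P_1^i\times\Aff^1$, and observing that the resulting threefold ordinary double point over $\lambda=-1$ admits a projective small resolution. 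Without deciding this dichotomy explicitly you cannot rule out $\varepsilon(X)=3$ (or miscount down to $1$), so the proposal as it stands does not prove the lemma.
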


\begin{proof}
Let us take the toric Landau--Ginzburg model
$$
\frac{(x+y+z)(x+y+z+xy+xz+yz+xyz)}{xyz}
$$
from~\cite{Prz13}.
The family of its fibers can be compactified to the pencil of quartics given by
$$
(x+y+z)(xt^2+yt^2+zt^2+xyt+xzt+yzt+xyz)=\lambda xyzt, \ \ \lambda \in \CC.
$$
Let $\alpha=\frac{\pm 1}{\sqrt{3}}$. Then
the singularities of members of the pencil are
\begin{multline*}
P_1^1=[1:0:0:0],\ \ P_1^2=[0:1:0:0],\ \ P_1^3=[0:0:1:0],\\
P_2^1=[1:-1:0:0], \ \ P_2^2=[1:0:-1:0],\ \ P_2^3=[0:1:-1:0], \ \ P_3=[0:0:0:1],
\end{multline*}
the cental fiber (over $\lambda=0$), and two ordinary double points $[1:1:1:\alpha]$ over
$\lambda=18\alpha+9  
$ that do not lie at the base locus.
One can see that for any $\lambda\neq 0$ the points $P_2^i$ are du Val of type $\dvA_1$,
and  the point $P_3$ is du Val of type $\dvD_4$.
The points $P_1^i$ are of type $\dvA_2$ for $\lambda\neq 0,-1$ and of type $\dvA_3$ for $\lambda=-1$;
they also lie on intersections of two components of base locus.
One can see that after a resolution of the base locus there is no singularity over $P_1^i$
for all $\lambda\neq 0$. Another way to see this is to consider the pencil of quartics
as a (singular) hypersurface in $\PP(x:y:z:t)\times \Aff(\lambda)$; after blowing $P_1^i\times \Aff(\lambda)$ up
one gets no singularity over $P_1^i$ for $\lambda\neq 0$, except for an ordinary double point on the hypersurface
given by $\lambda=-1$. This ordinary double point admit a projective small resolution, which also gives the assertion of the lemma.
\end{proof}

\begin{lemma}
\label{lemma:X1}
Let $X=X_{11}$, that is $Y_1$, and let $(Y,w)$ be a Landau--Ginzburg model for $X$.
Then $(Y,w)$ has three singular fibers: one reducible fiber and two fibers having a single ordinary double point.
Thus, $\varepsilon(X)=2$.
\end{lemma}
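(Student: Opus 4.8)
The plan is to follow the same strategy as in the proofs of Lemmas~\ref{lemma:V2}--\ref{lemma:V18}, adapted to the special situation already encountered for $V_2$ in Lemma~\ref{lemma:V2}. Namely, I would first pick an explicit standard toric Landau--Ginzburg model $f$ for $X = Y_1$ from~\cite{Prz13} (a representative of the mutation class of rigid maximally mutable Laurent polynomials associated to the hypersurface of degree $6$ in $\mathbb{P}(1,1,1,2,3)$). Because $n = 11$, the polynomial $f$ is not the restriction to a torus of a natural quartic pencil; so, exactly as for $V_2$, I would apply a birational change of variables, following~\cite[proof of Theorem 18]{Prz13} (see also~\cite{ChP25}), turning the family $\{f = \lambda\}$ into something birationally equivalent over the base to a pencil $\mathcal{L}_\lambda$ of quartic surfaces in $\mathbb{P}^3 = \Proj\CC[x,y,z,t]$, with $\lambda = 0$ giving the central fiber, which here is highly reducible ($h^{1,2}(X) + 1 = 22$ components).

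Next, I would determine all base points of $\mathcal{L}_\lambda$ and all isolated singular points of the members of $\mathcal{L}_\lambda$ that do not lie on the base locus. The expectation --- forced by the statement and matching every previous case --- is that, for $\lambda \neq 0$, each member has only du Val singularities, located at a fixed finite set of base points $P_i$ (of types $\dvA_k$ or $\dvD_k$, possibly jumping to a longer chain for a few special values of $\lambda$), together with, for exactly two special values of $\lambda$, a single ordinary double point sitting off the base locus.

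For each base point $P_i$ I would carry out, as in the treatment of $P_6$ for $V_{14}$ in Lemma~\ref{lemma:V14} and of the $P_1^i$ for $V_{18}$ in Lemma~\ref{lemma:V18}, the local resolution of the base curves of $\mathcal{L}_\lambda$ through $P_i$, checking that this resolution makes every non-central fiber smooth in a neighborhood of $P_i$ (one extra blow-up handling the jumped-type values of $\lambda$, without creating a new isolated singular fiber). Then, since the two ordinary double points lie off the base locus, the Calabi--Yau compactification $(Y,w)$ is biregular near them, so they persist as the only isolated singularities of fibers of $w$; the singular fibers of $w$ are therefore the central reducible fiber and two further fibers, each with a single ordinary double point, whence $\varepsilon(X) = 2$.

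I expect the main obstacle to be exactly this base-locus bookkeeping: since $h^{1,2}(Y_1) = 21$ is the largest among the index~$2$ families, the base configuration of $\mathcal{L}_\lambda$ after the required birational change of variables is intricate, and pinning down the du Val type at each $P_i$ for general $\lambda$ (and for the few exceptional $\lambda$) and verifying that the base-locus resolution introduces no spurious singular fibers requires a careful, if routine, local analysis. By Lemma~\ref{lemma: odp for all} the count $\varepsilon(X)$ does not depend on the chosen toric Landau--Ginzburg model or its Calabi--Yau compactification, so it is enough to carry all of this out for one convenient model.
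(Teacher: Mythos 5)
Your plan is exactly the method the paper uses: since $n=11$ is (together with $n=1$) one of the two exceptional cases where the toric Landau--Ginzburg model does not compactify directly to a quartic pencil, one follows~\cite[proof of Theorem 18]{Prz13} to get a birationally equivalent pencil $\mathcal{L}_\lambda$ of quartics, then classifies the singularities of its members, checks the base-locus singularities are du Val and disappear under the resolution, and counts the ordinary double points off the base locus. Lemma~\ref{lemma: odp for all} indeed justifies working with any one convenient model.

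However, as written this is a strategy, not a proof: every substantive claim is deferred. You never exhibit the pencil (the paper uses $x^4+z^2(xy-xt-t^2)=\lambda yz(xy-xt-t^2)$), never list the singular points of its members, never determine their du Val types, and never locate the ordinary double points or the values of $\lambda$ over which they occur. You explicitly describe the key conclusion as an ``expectation forced by the statement,'' which is circular --- the entire content of the lemma is the verification that there are exactly \emph{two} such points (in the paper: $[6:1:12\alpha:3]$ over $\lambda=24\alpha$ with $\alpha=\pm\sqrt{3}$, while the base points $[0:0:0:1]$, $[0:1:0:0]$, $[0:0:1:0]$, $[0:1:\lambda:0]$ are du Val of types $\dvA_3$, $\dvA_6$, $\dvA_1$, $\dvA_6$ for all $\lambda\neq 0$ and hence contribute nothing). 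Nothing in your argument rules out one, three, or zero ordinary double points, nor a base point whose type jumps in a way that produces an extra singular fiber. The missing explicit computation is not bookkeeping around a known answer; it \emph{is} the proof.
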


\begin{proof}
A family given by a standard toric Landau--Ginzburg model is birationally
(over the base) equivalent to the pencil of quartics
$$
x^4+z^2(xy-xt-t^2)=\lambda yz(xy-xt-t^2), \ \ \lambda \in \CC,
$$
see~\cite[proof of Theorem 18]{Prz13}.
Let $\alpha=\pm \sqrt{3}$.
Singularities of members of the pencil are
\begin{multline*}
P_1=[0:0:0:1],\ \ P_2=[0:1:0:0],\ \ P_3=[0:0:1:0], \ \
P_4=[0:1:\lambda:0],
\end{multline*}
the cental fiber (over $\lambda=0$), and two ordinary double points $[6:1:12\alpha:3]$ over $\lambda=24\alpha$ that do not lie at the base locus.
One can see that for any $\lambda\neq 0$ the point $P_1$ is du Val of type $\dvA_3$,
the points $P_2$ is of type $\dvA_6$, the point $P_3$ is of type $\dvA_1$,
and the point $P_4$ is of type $\dvA_6$.
This gives the assertion of the lemma.
\end{proof}

\begin{lemma}
\label{lemma:X2}
Let $X=X_{12}$, that is $Y_2$, and let $(Y,w)$ be a Landau--Ginzburg model for $X$.
Then $(Y,w)$ has three singular fibers: one reducible fiber and two fibers having a single ordinary double point.
Thus, $\varepsilon(X)=2$.
\end{lemma}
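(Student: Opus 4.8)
The plan is to argue exactly as in the preceding Lemmas~\ref{lemma:V10}--\ref{lemma:X1}: choose an explicit toric Landau--Ginzburg model $f$ for $Y_2$ from~\cite{Prz13}, make a birational (over the base) change of variables turning the family $\{f=\lambda\}$ into a pencil $\cL_\lambda$ of quartic surfaces in $\PP^3$, and then analyse the singularities of the members of $\cL_\lambda$ together with those of the resolution $(Y,w)$ of its base locus, which is the Calabi--Yau compactification.

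First I would write down $f$ (it is listed in~\cite{Prz13}; for the quartic double solid its Calabi--Yau compactification, like the $Y_1$ pencil of Lemma~\ref{lemma:X1}, reflects the double-cover structure), clear denominators to obtain $\cL_\lambda$, and compute its base locus together with the singular points of its members. Experience with the neighbouring cases suggests three types of singular points: (i) finitely many points lying on the base locus which, for every $\lambda\neq 0$, are du Val of type $\dvA_n$ or $\dvD_4$; (ii) the reducible central fiber over $\lambda=0$, which must occur since $h^{1,2}(Y_2)=10\neq 0$; and (iii) exactly two ordinary double points lying off the base locus, over the two roots of a quadratic in $\lambda$.

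Next I would treat the base-locus points one at a time: determine the du Val type of the induced fiber singularity (as in Lemmas~\ref{lemma:V12} and~\ref{lemma:V16}, this type typically jumps by one for a single special value of $\lambda$), blow up the components of the base locus passing through the point, and verify that after finitely many such blow-ups the threefold $Y$ is smooth there for all $\lambda\neq 0$, so that such points contribute no singular fiber of $w$. The two points in (iii), being disjoint from the base locus, survive the crepant resolution unchanged and yield precisely two singular fibers, each with a single ordinary double point. By Lemma~\ref{lemma: odp for all} the count is independent of the chosen compactification, so $\varepsilon(Y_2)=2$; together with the central fiber these exhaust the singular fibers.

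The main obstacle is step (i): one must genuinely resolve the base locus of $\cL_\lambda$ in a neighbourhood of each base-locus singular point and check that no singularity of the total space survives for any $\lambda\neq 0$. The delicate sub-cases are the points where several components of the base locus meet, or where the du Val type of the fiber jumps, since there a single blow-up may leave a residual ordinary double point on the total space over one exceptional value of $\lambda$; in that event one would exhibit a further (projective, small) resolution, as was done for $V_{18}$ in Lemma~\ref{lemma:V18}, and confirm it produces no extra singular fiber.
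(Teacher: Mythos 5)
Your proposal correctly identifies the strategy the paper uses (it is the same template as Lemmas~\ref{lemma:V10}--\ref{lemma:X1}: take a toric Landau--Ginzburg model from~\cite{Prz13}, compactify to a pencil of quartics, and analyse the singular points of its members relative to the base locus). However, as written it is a description of a method rather than a proof: the entire mathematical content of this lemma \emph{is} the explicit computation, and none of it is carried out. You do not exhibit the Laurent polynomial (the paper uses $\frac{(x+y+1)^4}{xyz}+z$), nor the resulting pencil of quartics, nor the list of singular points of its members, nor their du Val types. In particular, the key quantitative claims --- that exactly two ordinary double points occur off the base locus (over $\lambda=\pm 16$, at $[1:1:\lambda:2]$ in the paper's coordinates), and that every base-locus singular point is du Val of type $\dvA_3$ for all $\lambda\neq 0$ and hence contributes no singular fiber after resolution --- are asserted on the grounds that ``experience with the neighbouring cases suggests'' them. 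That is precisely what cannot be inferred: the neighbouring cases give $\varepsilon(X)=1$, $2$, or $4$ depending on the variety, extra ordinary double points can appear only after blowing up the base locus (as for $V_{14}$ in Lemma~\ref{lemma:V14}), and some points require a further small resolution (as for $V_{18}$ in Lemma~\ref{lemma:V18}). Whether any of these phenomena occur for $Y_2$ is exactly the question, and your proposal defers it rather than answering it.

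A secondary inaccuracy: for $X_{12}=Y_2$ no birational change of variables over the base is needed, unlike the cases $n=1,11$ treated in Lemmas~\ref{lemma:V2} and~\ref{lemma:X1}; the chosen Laurent polynomial compactifies directly to a pencil of quartics under the standard embedding $(\CC^*)^3\hookrightarrow\PP^3$ by clearing denominators. This does not affect the logic, but your appeal to the $Y_1$ case as a model for the compactification step is misplaced. To complete the proof you must supply the explicit pencil, enumerate its singular points, verify the du Val types for all $\lambda\neq 0$, and confirm the count of ordinary double points off the base locus; only then does Lemma~\ref{lemma: odp for all} let you conclude $\varepsilon(X)=2$ independently of the compactification.
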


\begin{proof}
Let us take the toric Landau--Ginzburg model
$$
\frac{(x+y+1)^4}{xyz}+z
$$
from~\cite{Prz13}.
The family of its fibers can be compactified to the pencil of quartics given by
$$
(x+z+t)^4+xyz^2=\lambda xyzt, \ \ \lambda \in \CC.
$$
Singularities of members of the pencil are
\begin{multline*}
P_1^1=[1:0:0:-1],\ \ P_1^2=[0:1:0:-1],\ \ P_2=[0:0:1:0],\\
P_3^1=[-1:0:\lambda:1], \ \
P_3^2=[0:-1:\lambda:1],\ \ P_4=[1:-1:0:0],
\end{multline*}
the cental fiber (over $\lambda=0$), and two ordinary double points $[1:1:\lambda:2]$ over
$\lambda=\pm 16$ that do not lie at the base locus.
One can see that for any $\lambda\neq 0$ the points $P_1^i,P_2,P_3^i,P_4$ are du Val of type~$\dvA_3$.
This gives the assertion of the lemma.
\end{proof}

Summing the statements in this section up, one gets the assertion of Theorem~\ref{theorem:ODP for LG}.

\begin{proof}[Proof of Theorem~\ref{theorem:ODP for LG}]
See Lemmas~\ref{lemma:P3},~\ref{lemma:quadric},~\ref{lemma: V22},~\ref{lemma:V5},~\ref{lemma:V2},~\ref{lemma:V10},~\ref{lemma:V12},~\ref{lemma:V14},~\ref{lemma:V16},~\ref{lemma:V18},~\ref{lemma:X1},~\ref{lemma:X2},
and Corollary~\ref{corollary: fibers for WCI}.
\end{proof}

\end{document}